\newtheorem{theorem}{\bf \mbox{Theorem}}[section]
\newtheorem{proposition}[theorem]{\bf \mbox{Proposition}}
\newtheorem{lemma}[theorem]{\bf \mbox{Lemma}}
\newtheorem{definition}[theorem]{\bf \mbox{Definition}}
\newtheorem{corollary}[theorem]{\bf \mbox{Corollary}}
\newtheorem{remark}{\bf Remark}
\newcommand{\field}[1]{\mathbb{#1}}
\newcommand{\C}{\field{C}}
\newcommand{\N}{\field{N}}
\newcommand{\Q}{\field{Q}}
\newcommand{\R}{\field{R}}
\newcommand{\Cal}[1]{\mathcal{#1}}
\newcommand{\wt}[1]{\widetilde{#1}}
\newcommand{\adj}{{\rm adj}}
\newcommand{\brN}{{\overline{\N}}}
\newcommand{\bg}{{\bf g}}
\newcommand{\bG}{{\bf G}}
\newcommand{\bh}{{\bf h}}
\newcommand{\bm}{{\bf m}}
\newcommand{\bp}{{\bf p}}
\newcommand{\bP}{{\bf P}}
\newcommand{\bq}{{\bf q}}
\newcommand{\bu}{{\bf u}}
\newcommand{\bU}{{\bf U}}
\newcommand{\bru}{\bar{u}}
\newcommand{\bv}{{\bf v}}
\newcommand{\bV}{{\bf V}}
\newcommand{\brv}{\bar{v}}
\newcommand{\cB}{\Cal{B}}
\newcommand{\cC}{\Cal{C}}
\newcommand{\cD}{\Cal{D}}
\newcommand{\cE}{\Cal{E}}
\newcommand{\cF}{{\Cal{F}}}
\newcommand{\cI}{\Cal{I}}
\newcommand{\cJ}{\Cal{J}}
\newcommand{\cK}{\Cal{K}}
\newcommand{\cL}{\Cal{L}}
\newcommand{\cM}{\Cal{M}}
\newcommand{\cN}{\Cal{N}}
\newcommand{\cO}{\Cal{O}}
\newcommand{\cosupp}{{\rm co}$-${\rm supp}}
\newcommand{\cQ}{\Cal{Q}}
\newcommand{\cS}{\Cal{S}}
\newcommand{\cU}{\Cal{U}}
\newcommand{\cV}{\Cal{V}}
\newcommand{\cX}{\Cal{X}}
\newcommand{\dd}{{\partial}}
\newcommand{\Dlt}{{\Delta}}
\newcommand{\gm}{{{\gamma}}}
\newcommand{\Gm}{{\Gamma}}
\newcommand{\kp}{{\kappa}}
\newcommand{\la}{{\langle}}
\newcommand{\omg}{{\omega}}
\newcommand{\Omg}{{\Omega}}
\newcommand{\ra}{{\rangle}}
\newcommand{\rd}{{\rm d}}
\newcommand{\rdlg}{{{\rm d}}_{\log}}
\newcommand{\reg}{{\rm reg}}
\newcommand{\sing}{{\rm sing}}
\newcommand{\sgm}{\sigma}
\newcommand{\Sgm}{\Sigma}
\newcommand{\str}{{\rm str}}
\newcommand{\te}{{\theta}}
\newcommand{\Te}{{\Theta}}
\newcommand{\tns}{{\otimes}}
\newcommand{\ula}{{\underline{a}}} 
\newcommand{\ulb}{{\underline{b}}}
\newcommand{\ve}{\varepsilon}
\newcommand{\vp}{\varphi}
\newcommand{\wik}{{\rm weak}}
\newcommand{\wta}{{\wt{\ula}}}
\newcommand{\wtb}{{\wt{\ulb}}}
\newcommand{\wtbt}{{\wt{\beta}}}
\newcommand{\wtbp}{{\wt{\bp}}}
\newcommand{\wtcD}{\wt{\cD}}
\newcommand{\wtcU}{{\wt{\cU}}}
\newcommand{\wtcS}{{\wt{\cS}}}
\newcommand{\wtE}{\wt{E}}
\newcommand{\wtbG}{\wt{\bG}}
\newcommand{\wtM}{\wt{M}}
\newcommand{\wtpi}{{\wt{\pi}}}
\newcommand{\wtS}{{\wt{S}}}
\newcommand{\wtsgm}{\wt{\sgm}}
\newcommand{\wtZ}{{\wt{Z}}}
\newcommand{\wtX}{{\wt{X}}}
\newcommand{\bo}{{\bf 0}}
\begin{document}
\title{Monomialization of singular metrics on real surfaces}
\author{Vincent Grandjean}
\address{Departamento de Matem\'atica, Universidade Federal do Cear\'a
(UFC), Campus do Pici, Bloco 914, Cep. 60455-760. Fortaleza-Ce,
Brasil.}
\email{\tt vgrandjean@mat.ufc.br}
\keywords{singular metrics; singular surfaces; resolution of singularities; singular foliations;
simple singularities of foliations; Hsiang \& Pati property.}
\subjclass{58K45 58E11 32B10 32S65 14P15}
\dedicatory{To celebrate Felipe Cano on his sixtieth birthday.}
\thanks{I am very pleased to thank D. Grieser, E. Bierstone and P. Milman
for their time, support, and what I learned from them, allowing me
to write this paper. I am most grateful for P. Milman's attention and unshakable patience towards me. 
Very big thanks to both referees for very helpful suggestions, comments and remarks.}
\begin{abstract}
We present a result of monomialization of regular $2$-symmetric tensors over regular real surfaces.
For the inner metric of an embedded real surface singularity, the result obtained 
for the regular extension of the pull-back of the inner metric over a resolved surface is more precise
for applications than a Hsiang \& Pati type resolution of singularities.
\end{abstract}
\maketitle
\tableofcontents
\section{Introduction and statement of a simpler version of the result} 
Any smooth $2$-symmetric tensor over a smooth Riemannian manifold 
$M$ is point-wise diagonalizable. The collection of these diagonalizing bases rarely forms a 
smooth local orthogonal frame everywhere on $M$.

\smallskip
Any real analytic family $(A(u))_{u\in \cU}$ of $n\times n$-symmetric matrices over an open subset $\cU$ 
of an Euclidean space $\R^p$, can be re-parameterized by a real analytic proper surjective mapping $\pi:\cV \to \cU$,
a finite composition of geometrically admissible blowings-up, which 
locally simultaneously diagonalizes everywhere on $\cV$ 
the pulled-back family $(A\circ\pi(v))_{v\in\cV}$, see Kurdyka \& Paunescu \cite{KuPa} and our generalization  \cite{Gra2}
(with a different proof).

\smallskip 
Our primary motivation is two fold: locally describe, as simply as can be
(i) real analytic $2$-symmetric tensors over a real analytic Riemannian manifold which may degenerate
somewhere, (ii) the restriction of real analytic $2$-symmetric tensors over a real analytic 
Riemannian (or Hermitian) ambient manifold $M$ to (the regular part of) a given real (or complex) analytic 
singular sub-variety $S$. 
\\
The model example to have in mind is the \em inner metric \em on $S$, that is the restriction of the ambient metric
to the regular part of the analytic subset $S$.
The problem at hands in this model example is understanding how the inner Riemannian structure on the 
regular part of the singular sub-variety accumulates at the singular part.  
An accepted scheme to treat this problem, which hopefully will yield a local description of the inner 
metric nearby the singular locus, is to re-parameterize the (embedded) singular sub-variety as a regular 
manifold by a (bi-rational like) regular surjective mapping (the resolution mapping of 
an embedded desingularization of the singular sub-variety). The pull back of the 
inner metric onto the resolved manifold then becomes a regular semi-Riemannian metric, losing
positive-definiteness along the pre-image of the singular locus of the original sub-variety 
(the exceptional locus of the resolution). 
Further \em carefully chosen \em blowings-up should yield a better description 
of the iterated pull-back of the metric. The meaning of \em carefully chosen \em 
is yet to be systematically developed in regard of the control that can be 
guaranteed after pull-back.

\smallskip
Hsiang \& Pati were first to provide a local description of inner metric of 
normal complex surface singularity germs \cite[Sections II \& III]{HP} along the proposed scheme. 
Later Pardon \& Stern conceptualized this point of view \cite[Section 3]{PaSt}. 
Grieser also found a real analytic version (unpublished) of Hsiang \& Pati's result \cite{Gri1}.
Almost fifteen years after Hsiang \& Pati, Youssin announced the existence of
such a local description 
on some resolved manifold of any given complex algebraic singularity \cite{Yo}. 
The short version can be stated as follows:

\smallskip\noindent
{\bf Youssin Conjecture \cite{Yo,BBGM}:}  
\em Given a singular complex algebraic sub-variety $X_0$ of pure dimension $n$ embedded in a complex 
(algebraic) manifold $M_0$ , 
there exists a resolution of singularities $\sgm:(X,E) \to X_0$ of $X_0$, which is a locally finite composition 
of blowings-up with regular (algebraic) centers, such that at each point $\ula$ of 
the exceptional divisor $E$, the pull-back, onto the resolved manifold $X$, of the inner metric of $X_0$ 
(inherited from the ambient Hermitian metric in $M_0$) by the resolution mapping $\sgm$ is (locally) quasi-isometric 
to a sum of the "Hermitian squares" of (exactly $n$ independent) differentials of monomials in 
the exceptional divisor $E$. Furthermore, the integer vectors made of the power of each monomials must be
linearly independent and totally ordered. \em

\smallskip\noindent
Our joint article \cite{BBGM} 
proposes a problem equivalent to Youssin's, which is better adapted to resolution singularities
techniques, proving Youssin's Conjecture in the case of real and complex 
(algebraic or analytic) surfaces and $3$-folds singularities.  

\medskip
The present paper focuses only on real analytic surface singularities
from the above point of view. Each of our joint works,  
\cite{GrGr} deals with the geodesics of cuspidal surface singularities ending at the singular point,
while \cite{GrSa} and \cite{Gra1} deal with germs of gradient differential equations over real surface
singularities, requires a tailored description of a singular metric over a regular surface
nearby an exceptional curve, finer than Hsiang \& Pati's normal form. 

Given a real analytic surface singularity $S$ in an ambient real analytic manifold $M$ endowed with a 
real analytic metric, given a $2$-symmetric tensor $\kp$ on $M$, can we find, as in the case 
of the inner metric, 
a useful and simple local presentation of the pull-back, on a 
resolved surface, of the restriction of $\kp$ to the regular part of $S$?

\medskip
Considered from the point of view of a resolved manifold, the
problems presented above are 
occurrences of the following general situation:
Let $X_1$ be a regular (i.e. real or complex algebraic or analytic) manifold 
and let $(B_1,\bg_1)$ be a regular vector bundle over $X_1$ of finite rank and equipped with a 
regular fiber-metric (Riemannian or Hermitian) $\bg_1$.
Let $\kp_1$ be a regular $2$-symmetric tensor field on $B_1$, (later shortened as 
$2$-symmetric tensor on $B_1$). 
Some features of $\kp_1$ can be investigated following two similar but different approaches: 

\smallskip\noindent
{\bf Parameterization Problem.} \em Find a regular and surjective mapping $\sgm_2: X_2 \to X_1$, 
and describe the features of $\kp_1\circ\sgm_2$ as objects with regular variations on $X_2$. 
Note that $\kp_1\circ\sgm_2$ is a $2$-symmetric tensor on the regular vector bundle $\sgm_2^*B_1$. \em

\smallskip\noindent
{\bf Resolution Problem.} 
\em When there exists a regular mapping of vector bundles $TX_1 \to B_1$ which is an isomorphism
outside a proper sub-variety of $X_1$, find a regular and surjective mapping $\sgm_2: X_2 \to X_1$ such that  
the features of the pulled-back $\kp_2$ of the $2$-symmetric tensor $\kp_1$, now a
$2$-symmetric tensor on $X_2$, are as good as can be. \em

\smallskip\noindent
The expression "as good as can be" is vague, indeed. But, depending on motivations, the end product of a 
solution of the resolution problem may vary. An instance of "as good as can be" is obtaining Hsiang \& Pati's 
local form.

\smallskip\noindent
{\bf Resolution Problem refined.} 
\em The pulled-back tensor $\kp_2$ can locally be written as a weighted sum of symmetric squares of two $1$-forms 
such that, at each point of a (explicit) simple normal crossing divisor $D$ containing the exceptional divisor, 
the push-forward of the kernels of these two $1$-forms by the resolution mapping are orthogonal. Moreover these 
two $1$-forms admit only simple singularities (if any), and the corresponding coefficients (weights) in the diagonal 
presentation of $\kp_2$ are monomial in $D$ times a unit 
\em

\smallskip
The Parameterization Problem consists mostly of
regularizing some functions (roots, components of vector fields, minors,...). 
Kurdyka \& Paunescu's results, generalizing Rellich's real and Kato's complex one dimensional case,
provide almost immediately an answer to the Parameterization Problem when 
$X_1$ is an open subset of an Euclidean space. Our generalization \cite{Gra2} provides the full answer.

The Resolution Problem, in the described refinement form of Hsiang \& Pati, has never been 
addressed. It is significantly harder than the associated Parameterization Problem, although
in practice we very likely have to start with solving this latter one. 
 
\bigskip
The present paper proposes an answer to the Resolution Problem refined for a "singular" $2$-symmetric tensor of 
a real analytic vector bundle of rank two over a regular real analytic surface,
isomorphic to the tangent bundle of the surface outside a sub-variety of positive codimension. 
This applies after preliminary preparations to the 
inner metric of a real analytic surface singularity embedded in real analytic Riemannian manifold. 
Below, our results are stated for the inner 
metric of a real analytic singular surface, the simplest possible situation.

\smallskip
Let $X_0$ be a real analytic surface singularity of a real analytic Riemannian manifold $(M_0,\bg_0)$,
supporting a real analytic space structure $(X_0,\cO_{X_0} := \cO_{M_0}/I_0)$, for a coherent 
$\cO_{M_0}$-ideal sheaf $I_0$.
Let $\gm_0: T(X_0\setminus Y_0) \to \bG(2,TM_0|_{X_0\setminus Y_0})$ be the Gauss
mapping of $X_0$, for $Y_0$ the (non-empty) singular locus of $X_0$. 
\\
Our first result (see Proposition \ref{prop:param-reg} for the general case) is a parameterization 
result of a global simultaneous diagonalization, namely:
\begin{proposition}\label{prop:param-simpler}
There exists $\sgm_1: (X_1,E_1) \to (X_0,Y_0)$, a locally finite composition of geometrically admissible 
blowings-up, such that $X_1$ is a regular surface, the Gauss mapping $\gm_0\circ\sgm_1$ extends as
a real analytic mapping $\gm_1: X_1 \to \bG(2,\sgm_1^* TM_0|X_1)$ over $X_1$ and $E_1$ is a simple normal
crossing divisor. 
Let $B_1$ be the real analytic vector sub-bundle of rank $2$ of $\sgm_1^*TM|_{X_1}$ corresponding to 
$\gm_1$, extending $\sgm_1^*T(X_0\setminus Y_0)$ over $X_1$. 

\smallskip
There exists 
an $\cO_{X_1}$-invertible sheaf $\cQ_1$ of sections of $S^2B_1^\vee$
with empty co-support such that at each point $\ula_1$ of $X_1$ there exists 
an open neighbourhood $\cU_1$ of $\ula_1$ with the following properties:

0) there exists, up to permutation, a pair (unique if the inner metric is not constant 
along the fibers) $\te_1$, $\te_2$ of regular sections of $B_1^\vee|_{\cU_1}$ both nowhere 
vanishing in $\cU_1$ such that $\cQ_1|_{\cU_1}$ is generated by $\te_1\cdot\te_2$, and
 
i) for each $\ula \in \cU_1$, the vector lines  $\ker \theta_1 (\ula)$ and $\ker \theta_2 (\ula)$ of 
$T_{\sgm_1(\ula)} M_0$ intersect orthogonally; 

ii) If $\sgm_1 (\ula)$ is not a singular point of $X_0$, then $\ker \te_i (\ula)$ is contained in 
$T_{\sgm_1(\ula)} X_0$;

iii) The metric parameterized by $\sgm_1$ writes on $\cU_1$ as 
\begin{center}
$(\bg_0\circ\sgm_1|_{B_1})|_{\cU_1} = (\alpha_1 \cdot \te_1)\tns(\alpha_1 \cdot \te_1) + 
(\alpha_2 \cdot \te_2)\tns (\alpha_2 \cdot \te_2) = \alpha_1^2\te_1\tns\te_1 + \alpha_2^2\te_2\tns\te_2$; 
\end{center}
where $\alpha_i$ is a unit for $i=1,2$.
\end{proposition}
In other words, a global regular object, the quadratic ideal $\cQ_1$, locally induces an orthogonal 
directional frame on the regular vector bundle $B_1$ over $X_1$, locally simultaneously diagonalizing
$(\bg_0|_{X_0})\circ\sgm_1$, so that the size of the 
(local) generators of the diagonalizing frame are monomials (here local units - compare with \cite{KuPa}).
This form is stable under further points blowings-up. Then, we get the \em desingularization \em 
of the pulled-back metric, leading to Theorem \ref{thm:main} presented in the following simpler form: 
\begin{theorem}\label{thm:main-simpler}
There exists a locally finite composition of geometrically admissible blowings-up 
\begin{center}
$\sgm_2: (X_2,E_2) {\buildrel\beta_2\over\longrightarrow} (X_1,E_1) {\buildrel\sgm_1\over\longrightarrow} (X_0,Y_0)$ 
\end{center}
such that each point $\ula_2$ of $X_2$ admits an open neighbourhood $\cU_2$ 
and, up to permutation, a pair (unique if not constant along the fiber)
$\omg_1$, $\omg_2$ of  regular sections of $TX_2^\vee|_{\cU_2}$ such that

i) Each foliation $\cD_i$ over $\cU_2$ generated by $\omg_i$, $i=1,2$, admits only simple singularities adapted to $E_2$;
  
ii) If $\sgm_2 (\ula_2)$ is a regular point of $X_0$, the lines  $D\sgm_2(\ula_2)\cdot \ker \omg_i (\ula_2)$, for $i=1,2$
are orthogonal in $T_{\sgm_2(\ula_2)} X_0$;

iii) The pull-back of the metric $\bg_0|_{X_0}$ by the resolution mapping $\sgm_2$ 
extends on $X_2$ as a real analytic semi-Riemannian metric 
$\bg_2$, which on $\cU_2$ is written as
\begin{center}
$\bg_2= (\cM_1 \cdot \omg_1)\tns(\cM_1 \cdot \omg_1) + 
(\cM_2 \cdot \omg_2)\tns (\cM_2 \cdot \omg_2) = \cM_1^2\omg_1\tns\omg_1 + \cM_2^2\omg_2\tns \omg_2$
\end{center}
where $\cM_i$ is a monomial in $E_2$, and $i=1,2$.
\end{theorem}
Without further blowings-up, we obtain the following 
(cf. Corollary \ref{cor:real-HP-smooth} and Corollary \ref{cor:real-HP-corner}):

\smallskip\noindent
{\bf Corollary.}
\em Each point of $E_2$ admits Hsiang \& Pati coordinates: 
\\
i) For $\ula_2$ a regular point of $E_2$, there are local coordinates $(u,v)$ at $\ula_2$ 
such that $(E_2,\ula_2) =\{u=0\}$ and the extension of the pulled-back metric $\bg_2$ is quasi-isometric 
nearby $\ula_2$ to
\begin{center}
$\rd u^{k+1} \tns \rd u^{k+1}  + \rd (u^{l+1}v) \tns \rd (u^{l+1}v)$ 
\end{center}
for non-negative integer numbers $l\geq k$.
\\
ii) For $\ula_2$ a corner point of $E_2$, there are local coordinates $(u,v)$ at $\ula_2$ such 
that $(E_2,\ula_2) =\{uv=0\}$ and the extension of the pulled-back metric $\bg_2$ is quasi-isometric 
nearby $\ula_2$ to 
\begin{center}
$\rd (u^mv^n)\tns \rd (u^mv^n)  + \rd (u^kv^l)\tns \rd (u^kv^l)$ 
\end{center}
for positive integer numbers $m\leq k, n\leq l$ and such that $ml - kn \neq 0$.
\em

\medskip
This article is organized as follows:
Section \ref{section:not-voc} presents basic material needed throughout the
paper and set some notations. 
Theorem  \ref{thm:desing} states Hironaka's resolution of singularities
\cite{Hi1,AHV} in a form best suited for our purpose.
Section \ref{section:quadratic} introduces further classical definitions.
The parameterization problem strictly speaking is solved in Section \ref{section:param-quad}
 by Proposition \ref{prop:param-reg} 
(given above in a special case).
Section \ref{section:res-foliat} recalls what is a resolution of singularities of
a real analytic plane singular foliation. 
Section \ref{section:pairs-foliat} investigates pairs of generically transverse
plane singular foliations, gives some technical results 
about the behavior of such a pair with respect to a prescribed normal crossing divisor. 
We recall the logarithmic point of view of the reduction of singularities of a 
plane foliation \cite{Ca1,Ca2,CCD}. 
What is developed in this section is essential to obtain 
the announced normal form.
Our main result, Theorem \ref{thm:main} presented in 
Section \ref{section:main}, solves the resolution problem in the refined version presented above.
The local normal form of the pull-back of the initial 
$2$-symmetric tensor is stable under further blowings-up.
Subsection \ref{subsection:appendixA1} and Subsection \ref{subsection:appendixA2}
deal with the notion of the restriction of $2$-symmetric tensor onto a singular sub-variety. 
To that end, we show Proposition \ref{prop:gauss-regular}, 
stating the existence of Gauss-regular resolution of singularities (see also \cite{BBGM}).
Appendix \ref{section:normal-form} describes the unexpected consequence of our detailed 
Theorem \ref{thm:main} and its completion Corollary \ref{cor:main}, 
for inner metrics on singular surfaces.
Proposition \ref{prop:metric-smooth} gives a normal form of the pull-back of the ambient 
metric on the resolved surface at any regular point of the exceptional divisor.

But for Hironaka's resolution of singularities and reduction of singularities of plane vector fields germs,
the paper is self contained, at the low cost of reproving already known results (or their variations).
%
%
%
%
%
%
%
%
%
%
%
%
%
%
%
%
%
%
%
%
%
%
%
%
%
%
%
%
%
%
%
%
%
%
%
%
%
%
%
%
%
\section{Setting - Resolution of Singularities Theorems}\label{section:not-voc}
\smallskip
In what follows the adjective \em analytic \em only means \em real analytic. \em

\medskip\noindent
A \em regular manifold \em is an analytic manifold. 
A \em regular sub-manifold \em of a given regular manifold is an analytic sub-manifold.
A \em regular mapping \em $M \to N$ is an analytic mapping between regular manifolds $M$ and $N$.
A \em sub-variety \em is a closed real analytic subset of a given regular manifold. 
A \em regular sub-variety \em is a sub-variety and a regular sub-manifold. 
A point $\ula$ of a given irreducible sub-variety $X$ is \em regular \em if the 
germ $(X,\ula)$ is a regular sub-manifold germ of dimension $\dim X$.
A point of a sub-variety is \em regular \em if it is a regular point of the irreducible 
component containing the given point.
A point of a sub-variety is \em singular \em if it is not regular. The singular locus of 
an irreducible sub-variety is a sub-variety of dimension strictly lower, if not empty.
Let $\cO_M$ be the sheaf of analytic function germs on the regular manifold $M$.
We will speak of \em $\cO_M$-ideals and $\cO_M$-modules \em to mean $\cO_M$-ideal sheaves and $\cO_M$-module
sheaves.

\medskip
A \em normal crossing divisor of a regular manifold $M$ of dimension $n$ \em is the co-support $D$
of a principal $\cO_M$-ideal of finite type which is locally monomial at each point of $M$.  
It is called a \em simple normal crossing divisor \em if furthermore each of its irreducible component is 
regular. We will shorten as \em (s)nc-divisor. \em 

Let $D$ be a normal crossing divisor of $M$. Each point $\ula \in M$ admits local regular coordinates 
$(\bu,\bv)=(u_1,\ldots,u_s;\bv)$ centered at $\ula$, with $0\leq s\leq n$, such that the germ 
of $D$ at $\ula$ writes $(D,\ula) =\{u_1\cdots u_s =0\}$. Such local coordinates 
are said \em adapted to the (simple) normal crossing divisor $D$ at $\ula$. \em  

\smallskip
Let $\ula$ be a point of $M$ and let $\cO_\ula := \cO_{M,\ula}$ be the regular local ring of the germ $(M,\ula)$.
Let $\bm_\ula$ be its maximal ideal and let $n = \dim (M,\ula) := \dim_\R \cO_\ula/\bm_\ula$. 
\\
A \em local monomial $\cM$ (at $\ula$) in the nc-divisor $(D,\ula)$ \em
is a function germ of $\cO_\ula$ such that there exist local regular coordinates $(\bu,\bv)=(u_1,\ldots,u_s;\bv)$ 
adapted to $D$ at $\ula$ in which 
\begin{center}
$\cM = \pm \Pi_{i=1}^s u_i^{p_i}$, for non-negative integer numbers $p_1,\ldots, p_s$. 
\end{center}
A principal $\cO_M$-ideal of finite type $I$ is \em monomial in the nc-divisor $D$ \em
if at each point of $M$ there exists local coordinates $(\bu,\bv) = (u_1,\ldots,u_s;\bv)$ 
adapted to $D$ at $\ula$ such that the local generator at $\ula$ of the ideal $I$ is a local
monomial in the nc-divisor $D$. 
 
\smallskip
Two local monomials $\cM_1 = \Pi_{i=1}^s u_i^{p_i}$ and $\cM_2 = \Pi_{i=1}^s u_i^{q_i}$ in the nc-divisor 
$(D,\ula)$ are said \em ordered \em if, either $p_i \geq q_i$ for each $i$ or $p_i\leq q_i$ for each $i$.
Any finite set of local monomials in $(D,\ula)$ is \em well ordered \em if any pair of distinct monomials is ordered.
Any finite set of monomials in the nc-divisor is well ordered if it is well ordered. 

\smallskip
Let $D$ be a nc-divisor of $M$. A regular sub-manifold $C$ of $M$ 
is \em normal crossing with $D$ at $\ula$ \em if up to a change of coordinates 
adapted to $D$ at $\ula$, we have 
$$
(C,\ula) = \{u_1 = \cdots = u_r = v_1 = \cdots = v_t = 0\}
$$ 
for $0\leq r\leq s$ and $0\leq t\leq n-s$.

\medskip
Let $(M,D)$ be a regular manifold with a nc-divisor $D$ (possibly empty).
A blowing-up with center a regular sub-variety $C$ is \em geometrically admissible \em if it
is normal crossing with $D$ at each point (see \cite[p. 213]{BM2} for a more restrictive definition). 
Assume the center $C$ is of codimension greater than or equal to 
$2$, and let $\beta_C: (M',E') \to (M,D)$ be such a blowing-up, then $E' := \beta_C^{-1}(D\cup C)$ 
is a nc-divisor, and is a snc-divisor if $D$ were. 

Let $Z$ be any subset of $M$. The \em strict transform of \em $Z$ by $\beta_C$ is 
defined as the analytic Zariski closure of $\beta_C^{-1} (Z\setminus C)$
and is denoted $Z^\str$. When a sub-variety $Z$ can be equipped with a real analytic space 
structure, the strict transform admits an algebraic description (see below). 
If $\gm: (M'',E'') \to (M',E')$ is a locally finite 
sequence of geometrically admissible blowings-up, we write again
$Z^\str$ for the strict transform of $Z$ by $\beta_C\circ\gm$.
Suppose furthermore that the (s)nc-divisor $D$ is the exceptional divisor 
of a locally finite sequence of geometrically admissible blowings-up 
$\pi: (M,D) \to N$, for some regular manifold $N$. We find $E' = D^\str \cup \beta_C^{-1} (C)$.  
Nevertheless {\bf strict transforms of an existing exceptional divisor 
will be denoted by the same letter as the exceptional divisor}:
$E' = D \cup \beta_C^{-1} (C)$.

\smallskip
We will use, almost systematically, the resolution of  
singularities of Hironaka, in the following embedded real setting.
\begin{theorem}[Embedded resolution of singularities \cite{Hi1,AHV,BM2}]\label{thm:desing}
Let $M$ be a (connected) regular manifold.

\smallskip
1) Let $I$ be a (non-zero and coherent) $\cO_M$-ideal sheaf. There exists a locally finite composition 
of geometrically admissible blowings-up $\pi: (\wtM,E_{\wtM})  \to M$ such that the total transform $\pi^* I$ 
is a principal ideal, monomial in the snc-divisor $E_{\wtM}$.

\smallskip
2) Let $X$ be a sub-variety of $M$, of positive codimension, for which there 
exists a coherent $\cO_M$-ideal sheaf with co-support $X$.
Let $Y$ be the singular locus of $X$.
There exists $\pi: (\wtM,\wtX,E_{\wtM}) \to (M,X,Y)$, a locally finite composition of 
geometrically admissible blowings-up, such that $\wtX :=\pi^{-1}(X\setminus Y)^\str$ is 
a regular sub-variety of $\wtM$, normal crossing with the snc-divisor $E_{\wtM}:= \pi^{-1}(Y)$ 
and such that $\wtX \cap E_{\wtM}$ is a snc-divisor of $\wtX$.   
\end{theorem}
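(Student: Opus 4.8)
The plan is \emph{not} to reprove this celebrated theorem of Hironaka from scratch — for that we refer to \cite{Hi1,AHV,BM2} — but rather to indicate the skeleton of the argument and the points where the real-analytic, locally finite version needs care. The two parts are proved together, with part~2) reduced to part~1): apply the \emph{principalization} of part~1) to (a suitable power of) a coherent $\cO_M$-ideal sheaf with co-support $X$; once its total transform is monomial in an nc-divisor $E_{\wtM}$, the strict transform $X^\str = \pi^{-1}(X\setminus Y)^\str$ is seen to be regular and normal crossing with $E_{\wtM}$, and the remaining assertion that $X^\str\cap E_{\wtM}$ is an nc-divisor of $X^\str$ is a short computation in coordinates adapted to $E_{\wtM}$ at each point. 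So the heart of the matter is part~1).

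For part~1), I would first fix the right inductive object: a \emph{marked ideal} (basic object) $(M,I,E,d)$ — the ambient regular manifold, the ideal, the accumulated nc-divisor $E$, and a control integer $d$ — together with the notion of an admissible blowing-up transforming it, the goal being to make $I$ monomial in $E$ (equivalently, after bookkeeping, to resolve it to $\cO_M$). The engine is the \emph{resolution invariant} $\mathrm{inv}$: to a point $\ula$ one attaches first $\mathrm{ord}_\ula I$ (the order of $I$ in $\cO_\ula$), then, working on a local \emph{hypersurface of maximal contact} through $\ula$ — a smooth hypersurface containing the locus where that order is locally maximal — the order of the induced \emph{coefficient ideal}, and so on, recursively decreasing the dimension and interleaving the contributions of the exceptional components of $E$. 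One then checks the three standard facts: (a) $\mathrm{inv}$ is upper semicontinuous with values in a well-ordered set; (b) its maximum locus is a closed regular sub-manifold that is normal crossing with $E$, so blowing it up is geometrically admissible; (c) $\mathrm{inv}$ never increases under such a blowing-up and eventually strictly decreases on the relevant locus, which forces termination — here locally finite rather than finite, since $M$ need not be compact.

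The two steps I expect to be genuinely delicate are the \emph{globalization} and the \emph{passage to the real-analytic category}, with the monotonicity in (c) (the ``going-up'' behaviour of the invariant, and its strict drop) a close third. Hypersurfaces of maximal contact exist only locally and are wildly non-unique, so one must verify that $\mathrm{inv}$, the centers it prescribes, and the restricted coefficient/companion ideals are canonical up to the equivalence under which the whole construction is stable; this canonicity is exactly what lets the local recipes be glued into a single globally defined locally finite sequence of geometrically admissible blowings-up $\pi:(\wtM,E_{\wtM})\to M$. For the real-analytic statement one can either run the same algorithm verbatim — every ingredient (order, maximal contact, coefficient ideals, the terminal combinatorial monomial case, and the invariant argument) is local and analytic — or complexify $M$ and $I$, invoke the complex-analytic resolution, which, being canonical, is equivariant for the complex-conjugation involution, and descend the resulting blow-up sequence to the real points; local finiteness survives because the centers are locally determined by $\mathrm{inv}$. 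Everything else in the statement (e.g. that $\beta_C^{-1}(E\cup C)$ is again an nc-divisor when $C$ has codimension $\ge 2$ and is normal crossing with $E$) is the routine adapted-coordinate bookkeeping already recorded in this section.
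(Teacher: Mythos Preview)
The paper does not give its own proof of this theorem: it is stated as a citation of Hironaka's celebrated result \cite{Hi1,AHV,BM2} and used throughout as a black box. So there is no ``paper's proof'' to compare your proposal against. Your sketch is a reasonable outline of the Bierstone--Milman approach via the canonical invariant, maximal contact, and coefficient ideals, and it correctly flags the delicate points (globalization, real-analytic category, local finiteness); but for the purposes of this paper the appropriate ``proof'' is simply a reference to the cited works, which is exactly what the paper does.
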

Although real algebraic sub-varieties can always be equipped with a ringed space structure 
induced by a coherent ideal sheaf, in order to be desingularized, real analytic singular sub-varieties 
(see \cite[Sect. 4]{GMT}) must also be equipped with a real analytic space structure, which, 
unlike their complex counter-part, is not always possible. (See \cite[Sect. 10]{BM2} for a 
proper account on the category of ringed spaces that can be desingularized). 
Therefore we ask for the following condition to be satisfied:

\smallskip\noindent
{\bf Hypothesis.} Any singular sub-variety to be desingularized  
admits a coherent ideal sheaf of the structural sheaf of the ambient regular manifold with co-support 
the given sub-variety, so that the sub-variety is equipped with the corresponding real analytic 
space structure. 

\smallskip
Let $X$ be a sub-variety of a regular analytic manifold $M$, equipped with a real analytic 
space structure given by a coherent $\cO_M$-ideal sheaf $I$. 
Let $\beta_C: (M',E') \to (M,C)$ be a geometrically admissible blowing-up
with center $C$ which does not contain all irreducible components of $X$.
The coherent $\cO_{M'}$-ideal $\beta_C^* I$ gives rise to $I^\str$, a coherent $\cO_{M'}$-ideal obtain
locally as follows: factors out of any local section of $\beta_C^* I$ any term that vanishes along the germ of $E'$.
The co-support of this ideal is the strict transform $X^\str$ of $X$ we mentioned earlier
(see \cite[Section 3, p. 237]{BM2}). In contrast the weak transform of $X$ is obtained as follows: the ideal 
$\beta_C^* I$ factors out as $I_{E'}^m\cdot I'$, where $I_{E'}$ is the coherent $\cO_{M'}$-ideal sheaf of 
function germs vanishing along $E'$ and $m$ is a non-negative integer number and $I'$ is a coherent $\cO_{M'}$-ideal 
sheaf with co-support the \em weak transform \em $X^\wik$ of $X$, which does not contain $E'$. 
The weak transform $X^\wik$ always contains the strict transform $X^\str$, and they may differ because of their respective 
intersection with the exceptional divisor $E'$. Note that if $I$ is principal, then both notions coincide.

\smallskip
We recall and will use the following result of ordering any finite family of Monomials.
\begin{theorem}[Ordering Monomials \cite{BM2}]\label{thm:order-monom}
Let $M$ be a (connected) regular manifold. Let $D$ be a (s)nc-divisor such that 
each of its component is regular. Let $I_1,\ldots,I_k$ be principal $\cO_M$-ideals monomial 
in the nc-divisor $D$. 
There exists $\pi: N\to M$, a locally finite sequence of geometrically admissible blowings-up 
(with centers normal crossing with $D$ and its iterated total transforms), such that
the pulled-back ideals $\pi^*I_1,\cdots,\pi^*I_k$ are principal $\cO_N$-ideals monomial in 
the (s)nc-divisor $\pi^{-1} (D)$ such that at each point of $N$ the local generators of 
$\pi^*I_1,\cdots,\pi^*I_k$ are ordered.
\end{theorem}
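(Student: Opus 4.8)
The plan is to arrange the desired ordering one pair of ideals at a time, by principalizing the sum $I_i+I_j$, and to make sure that each such operation does not spoil the orderings already obtained. Throughout, the only blowings-up used will have as centers intersections of components of (the current total transform of) $D$; call these \emph{toric centers}. Since every component of $D$ is regular, such a center is a regular sub-variety normal crossing with $D$, so the blowing-up is geometrically admissible, and (when the codimension is at least two) its exceptional divisor is adjoined to the divisor with the convention fixed above. So any locally finite composition of toric blowings-up is automatically of the type allowed in the statement.

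First I would record the two elementary stability facts on which everything rests. Because each component $D_\alpha$ of $D$ is regular, an $\cO_M$-ideal monomial in $D$ is the same datum as an assignment of a non-negative integer $e^l_\alpha$ to each $D_\alpha$: at a point $\ula$ with coordinates adapted to $D$, a local generator of $I_l$ is $\pm\prod u_i^{e^l_i}$ (where $e^l_i$ is the value on the component carrying $u_i$), and $\{I_i,I_j\}$ is well ordered at $\ula$ exactly when the integer vectors $(e^i_1,\dots,e^i_s)$ and $(e^j_1,\dots,e^j_s)$, over the components through $\ula$, are comparable coordinatewise. A computation in the standard charts of a toric blowing-up $\beta$ then shows: (i) the total transform $\beta^*I_l$ is again principal and monomial in the new divisor; and (ii) in each chart the exponent vector of $\beta^*I_l$ is obtained from that of $I_l$ by one and the same linear substitution with non-negative integer matrix, so coordinatewise comparability of two exponent vectors is preserved — hence a pair well ordered everywhere stays well ordered everywhere. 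These properties persist through any locally finite composition of toric blowings-up.

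Next comes the key local point. For a fixed pair $i<j$ the ideal sheaf $I_i+I_j$ is coherent, non-zero and locally generated by monomials in $D$, so by Theorem \ref{thm:desing}(1) — whose algorithm, applied to a monomial ideal, uses only toric centers (see \cite{BM2}) — there is a locally finite composition of toric blowings-up after which $I_i+I_j$ becomes principal and monomial. At any point the local generator $\cN$ of the transformed $I_i+I_j$ then divides both transformed generators of $I_i$ and $I_j$ and lies in the (monomial) ideal they generate; since a monomial lying in a monomial ideal is divisible by one of the generators, $\cN$ equals up to a unit one of those two generators, which therefore divides the other. Thus after this step $\{I_i,I_j\}$ is well ordered at every point.

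Finally I would assemble these by induction on $k$ (the case $k=1$ being vacuous): having ordered $I_1,\dots,I_{k-1}$ pairwise after a locally finite composition of toric blowings-up over $M$, apply the one-pair step successively to the pairs $(I_k,I_1),\dots,(I_k,I_{k-1})$. Each step is a locally finite composition of toric blowings-up, so by the stability facts it keeps every transform principal and monomial in the divisor and keeps every already-ordered pair ordered; after the last step all pairs $\{I_k,I_l\}$ are ordered and so are all pairs among $I_1,\dots,I_{k-1}$. The total map $\pi:N\to M$ is then the required locally finite composition of geometrically admissible blowings-up, normal crossing with $D$ and its iterated total transforms, and $\pi^*I_1,\dots,\pi^*I_k$ are principal, monomial in $\pi^{-1}(D)$, and well ordered at every point. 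I expect the only genuinely delicate points to be the bookkeeping behind the stability facts — in particular verifying that ordering a new pair cannot destroy an old one, which is exactly what makes the iteration converge to a \emph{simultaneous} conclusion — together with the standard (but worth stating) fact that principalization of a monomial ideal is realized by toric blowings-up, hence remains compatible with $D$.
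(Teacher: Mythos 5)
The paper itself does not prove Theorem~\ref{thm:order-monom}: it is recalled from \cite{BM2} with no proof, so there is no in-paper argument to compare yours against. I will therefore evaluate your proof on its own.

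Your proof is correct in outline and captures the essential combinatorial content. The two stability facts (total transforms of monomial ideals under a toric blowing-up stay monomial, and the exponent vectors transform by a common non-negative integer matrix in each chart, so coordinatewise comparability is preserved) are right and do the real work of making the induction on $k$ converge to a simultaneous statement. The local identification argument — that after principalizing $I_i+I_j$ the local generator must be, up to a unit, one of the two transformed generators, hence one divides the other — is correct; the key point, that a monomial lying in a monomial ideal of the local (analytic) ring must be divisible by one of the monomial generators, holds in $\R\{u\}$ by the usual coefficient-extraction argument.

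The one genuine load-bearing step that you relegate to a citation is the claim that principalization of a monomial ideal (here $I_i+I_j$) can be effected by a locally finite sequence of \emph{toric} blowings-up, i.e.\ with centers that are intersections of components of the current divisor. You cite this to \cite{BM2}, which is fair since the theorem itself is attributed there, but note that Theorem~\ref{thm:desing}(1) as stated in this paper only yields centers normal crossing with the exceptional divisor being built, not with $D$. So what you actually need is either the compatibility of the canonical algorithm of \cite{BM2} with the torus symmetry of monomial ideals (hence centers are intersections of components), or — more self-contained and cleaner in this two-ideal setting — a direct combinatorial argument: at a point where the exponent vectors $\mathbf a,\mathbf b$ of $I_i,I_j$ are incomparable, pick components $D_l,D_m$ through the point with $(a_l-b_l)(a_m-b_m)<0$ and blow up $D_l\cap D_m$; a descent on $\sum_l |a_l-b_l|$ (or a Newton-polytope argument) shows this terminates. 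Including a sentence of that form would make the proof genuinely self-contained and independent of any appeal to the internals of the \cite{BM2} algorithm. With that caveat, the argument is sound and is essentially the standard proof of this result.
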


\medskip\noindent
{\bf Some notations.} 

\smallskip
We will use $Unit$ to mean any regular function germ which is a local unit and for which 
a more specific notation is not necessary.

We will write $const$ to mean a non-zero constant we do not want to precise further. 

We will write $(...)$ to mean a regular function germ we do not want to 
denote specifically. 

If $z$ is a component of some local coordinates system centered at some given point 
then $z^{+\infty}$ means the null function germ.

Let $\brN : = \N \cup \{+\infty\}$ and $\brN_{\geq t} := \{n\in \brN: n \geq t\}$.  
%
%
%
%
%
%
%
%
%
%
%
%
%
%
%
%
%
%
%
%
%
%
\section{Bilinear symmetric forms}\label{section:quadratic}
Let $V$ be a real vector space of finite dimension. 

The real vector space tensor product $V\tns V$ decomposes as the direct sum of the real vector subspaces 
$S^2(V) \oplus \wedge^2V$, where $S^2(V)$ is the $2$-nd symmetric power of $V$ and 
$\wedge^2V$ is the $2$-nd exterior power of $V$.
A symmetric bilinear form of $V$ is just an element of $S(2,V):= S^2(V^\vee)$ for $V^\vee$ the dual vector space of $V$. 

Let $Q(V)$ be the real vector space of real quadratic forms on $V$. 
There is a canonical isomorphism $Q(V) \to S(2,V)$ mapping a quadratic form $\kp$ 
onto its polar form $\kp_\Delta$. 

\medskip
Let $M$ be a connected regular manifold of finite dimension. 
Let $F$ be a regular vector bundle of finite rank over $M$. 
Let $F^\vee$ be the dual bundle and $\bP F$ be the projective bundle associated with $F$.
Let $\Gm_M (F)$ be the $\cO_M$-module of regular sections of $F$. 
Let $S(2,F):=S^2(F^\vee)$, respectively $Q(F)$, be the vector bundle over $M$ of $2$-symmetric tensors,
respectively quadratic forms, over the fibers of $F$. 
These two vector bundles are canonically isomorphic via a regular mapping of vector bundles over $M$.
A regular \em quadratic form on $F$ \em is a regular section $M\to Q(F)$ and a 
\em $2$-symmetric regular tensor field on $F$ \em is a regular section $M \to S(2,F)$, shortened 
as \em $2$-symmetric tensor on $F$. \em
When $F = TM$ we just say \em $2$-symmetric tensor on $M$, \em 
respectively \em quadratic form on $M$. \em

\smallskip
A (regular) \em fiber-metric \em on $F$ is a regular section $M\to S(2,F)$ such that the 
corresponding quadratic form is everywhere positive definite. Such a regular fiber-metric 
always exists by Grauert and Morrey Theorem \cite{Grau,Mor}.
When $F$ is equipped with a (regular) fiber metric $\bg$ we write $|-|_\bg$ for the norm 
and $\langle -, -\rangle_\bg$ for the corresponding scalar product 

\medskip
As for foliations (see Section \ref{section:res-foliat}), it is more convenient 
to study invertible sub-modules of $2$-symmetric tensors than sections. 

\smallskip

An invertible $\cO_M$-sub-module $\cL$ of $\Gm_M (S(2,F))$
\em vanishes at the point $\ula$ of $M$, \em if the bilinear symmetric form 
$\kp(\ula)$ over $T_\ula M)$ is the null form, for $\kp$ is a local generator of $\cL$ nearby $\ula$. 
The \em vanishing locus $V(\cL)$ of $\cL$ \em is the co-support of $\cL$ and is a sub-variety of $M$. 
Since $M$ is connected, we say that $\cL$ is \em non-zero \em (or \em non-null\em) if $V(\cL)$ 
is of positive codimension. We say that $\cL$ \em 
does not vanish over a subset $S$ \em if $V(\cL) \cap S$ is empty.
%
%
%
%
%
%
%
%
%
%
%
%
%
%
%
%
%
%
%
%
%
%
%
%
%
%
%
%
%
%
%
%
%
%
%
%
\section{Good parameterization of $2$-symmetric tensors on regular surfaces}\label{section:param-quad}
Proposition \ref{prop:param-reg}, the main result of this section, is the first important step 
towards our main result. It is essentially a precise local simultaneous diagonalization result for a 
global orthogonal (directional) frame. It is a consequence of Theorem \ref{thm:param-reg} (\cite{KuPa}
for the local case and our paper \cite{Gra2} for the global case).

\medskip
\noindent
Since there will be competitive notations further down in the paper, we do take the following 

\medskip\noindent
{\bf Notations for pull-backs:} \em Let $B$ be a smooth vector bundle over a smooth manifold $N$. Let $\sgm : M\to N$ be a smooth mapping.
The pulled-back vector bundle $\sgm^* B$ of $B$ on $M$ along the mapping $\sgm$ is denoted from now on by $B^\sgm$.

If $A$ is a sub-module or sub-module sheaf over $N$ 
of sections of $B$ or $B^\vee$ or $\wedge^2 B$ or $S(2,B)$ (more generally of sections of 
tensors over $B$), we will denote $A^\sgm$ for $A\circ\sgm$. 

Let $\kp$ be any section which is tensorial in/over the fibers of $B$. We will denote the pull-back 
$\kp\circ\sgm$, tensorial in/over the fibers of $B^\sgm$, by $\kp^\sgm$.

If $I$ is an ideal or ideal sheaf over $N$ the pull-back of this ideal by
$\sgm$ is still denoted in the standard way: $\sgm^*I = I\circ\sgm$. 
\em

\medskip
We recall the following
\begin{theorem}[\cite{KuPa,Gra2}]\label{thm:param-reg}
Let $M$ be a regular connected manifold, and let $B$ be a regular vector bundle over $M$ of rank $r\geq 1$ 
equipped with a regular fiber-metric $\bg$. Let $\kp : M \to S(2,B)$ be a regular symmetric operator
respectively to the fiber-metric $\bg$. 

There exists $\sgm:M' \to M$ a locally finite composition of geometrically admissible blowings-up such that 
every point $\ula'$ of $M'$ admits neighbourhood $\cU'$ in $M'$ and a regular 
frame $(\xi_1,\ldots,\xi_r)$ of $(B^\sgm)|_{\cU'}$ orthonormal for $\bg^\sgm|_{\cU'}$ in which $\kp^\sgm(\ulb')$ 
is diagonal at every point $\ulb'$ of $\cU'$.
\end{theorem}
Truth be told, the conclusion of Theorem \ref{thm:param-reg} still holds for a coherent invertible 
$\cO_S$-sheaf of sections of $S(2,B)$ by our method \cite{Gra2}. 

\medskip
In Section \ref{section:not-voc}, we introduced the notions of strict transform and weak transform.
Let $S$ be a regular surface and let $I$ be a coherent
$\cO_S$-ideal with co-support $X$ of dimension $1$. If $\sgm :(S',E') \to S$ is any sequence of points blowings-up,
the difference between the strict transform $X^\str$ and its weak transform $X^\wik$ may consist of an isolated subset of the 
exceptional divisor $E'$. We blow-up to principalize and monomialize some ideals, 
making the support of their total transform a snc-divisor. At this stage weak transform and total 
transform of the co-support of the initial ideal are equal. It is a  snc-divisor which is normal crossing with
the exceptional divisor of the principalization and monomialization process. Note that strict transform 
and weak transform of the co-support of a principal ideal are equal.

\medskip
Let $S$ be a connected regular surface and let $B$ be a regular vector bundle of rank two over $S$ equipped
with a regular fiber-metric $\bg$.

Suppose given $\cL$ a non-zero invertible  $\cO_S$-sub-module of sections of $S(2,B)$.
The \em ideal $\cC_\cL$ of coefficients of $\cL$ \em is the coherent $\cO_S$-ideal obtained by the evaluation of 
$\cL$ (by means of local generators) along the (local) regular sections $S\to B\times_S B$. 
The \em vanishing locus \em $V(\cL)$ is the co-support 
of $\cC_\cL$. The ideal $\cC_\cL$ is of finite type by definition, thus $\cO_S$-coherent. 

\smallskip
The \em degeneracy locus \em $D(\cL)$ of $\cL$ is the sub-variety of $S$ consisting of the points at which 
any local 
generator $\kp$ gives rise to a degenerate bilinear symmetric form on the corresponding fiber, locus which  
contains $V(\cL)$. If $D(\cL)$ is empty, we say that $\cL$ is \em non-degenerate. \em
If $D(\cL)$ is whole of $S$, we say that $\cL$ is \em everywhere degenerate. \em
If $D(\cL)$ is everywhere of positive local codimension, we say that $\cL$ is \em generically non-degenerate. \em

\smallskip
Given local (and trivializing) coordinates $(u,v;X,Y)$ of $B$ at $\ula$ in $S$, we can write $\ula$ a local 
generator of $\cL$ at $\ula$ as 
\begin{center}
$(\kp (\ula)) ((X_1,Y_1),(X_2,Y_2)) = a X_1 X_2 + b (X_1 Y_2 + Y_1 X_2) + cY_1 Y_2$,
\end{center}
The $\cO_S$-ideal locally generated at (any) $\ula$ by $ac -b^2$ is also of finite type, 
whose co-support is exactly the degeneracy locus $D(\cL)$, and where $a,b,c$ lie in $\cO_{S,\ula}$.

\smallskip\noindent
{\bf Notation.} \em We denote this latter $\cO_S$-ideal by $I_\cL^D$. \em

\medskip
\smallskip
Let $\sgm: R \to S$ be a locally finite composition of point blowings-up and let $E$ be the exceptional 
divisor. By definition, the pull-back $\cL^\sgm$ of $\cL$ by $\sgm$ is the invertible $\cO_R$-module 
of $\Gm_R(S(2,B^\sgm))$ locally generated by $\kp^\sgm$ (observe that $S(2,B)^\sgm = S(2,B^\sgm)$).

\smallskip
Since $\cL$ is non-zero, the total transform of $\cC_\cL$ factors out as 
$\sgm^*\cC_\cL = J\cdot K$, for a principal $\cO_S$-ideal $J$ monomial in $E$ and  
an $\cO_S$-ideal $K$ whose co-support does not contain any component of $E$. 
The invertible $\cO_R$-submodule of $\Gm_R (S(2,B^\sgm))$ defined as 
\begin{center}
$\cL^{\sgm,div}:=(\cL^\sgm)^{div} := J^{-1}\cdot(\cL^\sgm)$
\end{center}
is called the \em divided pull-back \em 
of $\cL$ by $\sgm$. If $\kp'$ is a local generator of
$\cL^{\sgm,div}$ then the form $\kp'$ is the null bilinear symmetric form at $\ula$ of $R$ if and only if 
$\ula$ lies in  $\cosupp (K)$. 

\smallskip 
As we did for $\cL$, we also define $D(\cL^{\sgm,div})$, the \em degeneracy locus \em of 
$\cL^{\sgm,div}$, that is the set of points $\ula$ of $R$ where the bilinear symmetric form
$\kp' (\ula)$ is degenerate, for $\kp'$ a local generator of $\cL^{\sgm,div}$ at $\ula$.  
If $\cL$ is non-degenerate (respectively everywhere-degenerate, respectively generically 
degenerate), so is $\cL^{\sgm,div}$.

When $\cL$ is generically non-degenerate, we find out that the ideal $\sgm^*I_\cL^D$ decomposes 
as $\sgm^*I_\cL^D = J^D\cdot K^D$, where $J^D$ is a principal $\cO_R$-ideal monomial in 
$E$, while $K^D$ is a $\cO_R$-ideal whose co-support does not contain any component of $E$. 
Since $I_\cL^D \subset \cC_\cL^2$, the ideal $J^D$ is contained in the ideal $J^2$, and thus 
deduces that 
\begin{center}
$D(\cL^{\sgm,div}) = \cosupp (J^{-2} J^D K^D)$. 
\end{center}

\smallskip
We need to prepare a little bit our situation with 
the following
\begin{lemma}\label{lem:param-1}
There exists $\sgm_1:(S_1,E_1) \to S$, a locally finite sequence of points 
blowings-up, so that the total transform $\sgm_1^*\cC_\cL$ is a principal and 
monomial $\cO_{S_1}$-ideal in the snc-divisor $V_\cL:= \sgm_1^{-1} (V(\cL))$, 
which contains the exceptional divisor $E_1$. Furthermore, if $\cL$ is generically non-degenerate, 
the total transform $\sgm_1^*I_\cL^D$ is principal and monomial in the snc-divisor $D_\cL:= (\sgm_1)^{-1} (D(\cL))$ 
which is normal crossing with the snc-divisor $E_1\cup V_\cL^\str$.
\end{lemma}
\begin{proof}
It is straightforward from principalization and monomialization of ideals \cite{Hi1,BM1}, as quoted in point 1) of 
Theorem \ref{thm:desing}.
\end{proof}
%
%
We need some further preparatory material.
We will work mostly locally, with germs. These local data will be gathered
in an appropriate module or ideal sheaf.

\smallskip
Let $B_1 := B^{\sgm_1}$ and let $\cL_1$ be the invertible 
$\cO_{S_1}$-module 
\begin{center}
$\cL_1 := \cL^{\sgm_1,div}$
\end{center} 
(by Lemma \ref{lem:param-1}) and let $\kp_1$ be a local generator of $\cL_1$ 
over some open neighbourhood $\cU_1$ (of some given point). The $2$-tensor $\kp_1$ vanishes nowhere in $\cU_1$. 
Let $\bg_1$ be the fiber-metric $\bg^\sgm_1$. 
Let  $(u,v;X,Y)$ be regular trivializing coordinates of $B_1$ over $\cU_1$.
Let $G_1(\ula)$ be the matrix of $\bg_1(\ula)$ and $K_1(\ula)$ that of $\kp_1$ for any $\ula\in \cU_1$.
The mappings $\ula\to G_1(\ula),K_1(\ula)$ are regular over $\cU_1$. Up to shrinking $\cU_1$ and
applying Gram-Schmidt orthonormalization to the frame over $\cU_1$, we can assume that
$G_1$ is the identity matrix at every point of $\cU_1$.
Thus for any $\bU,\bV$ sections of $B_1|_{\cU_1}$ we get
$$
\kp_1(\bU,\bV) = \bg_1(K_1\bU,\bV) = \la K_1\bU,\bV\ra .
$$
At any point $\ula =(u,v)$ of $\cU_1$ we can write 
$$
\kp_1(\ula)((X_1,Y_1),(X_2,Y_2)) = a_1 X_1X_2 + b_1(X_1Y_2 + X_2Y_1) + c_1 Y_1 Y_2 .
$$
By Lemma \ref{lem:param-1} we find that $a_1c_1 - b_1^2 = \cM_1\cdot \vp_1$, where $\cM_1$ is a monomial 
and $\vp_1$ is a unit over $\cU_1$, or $a_1c_1 - b_1^2$ is identically null, while the ideal generated by 
$(a_1,b_1,c_1)$ is $\cO_{S_1}|_{\cU_1}$.
Following the idea of \cite{Gra2}, the quadratic form in $B_1|_{\cU_1}$ 
$$
K_1\bU \wedge \bU = b_1(Y^2-X^2) +(a_1-c_1)XY =: P_1(\ula)(X,Y) 
$$
vanishes either along two distinct lines or along the full plane $B_{1,\ula}$.
Note that
$$
\{ \bU \in B_{1,\ula} :  K_1\bU \wedge \bU = 0 \} = \{\bU \in B_{1,\ula} :  \rd_\bU \kp_1(\ula) 
\wedge \rd_\bU \bg_1(\ula) = 0\},
$$
therefore is independent on the choice of local coordinates on $B_1$.
Let $Q_1(\ula)$ be the line of $S(2,B_{1,\ula})$ generated by the quadratic polynomial $P_1(\ula)$. 
This vector line depends only on $\cL_1$ at $\ula$ and that is why we can glue all the 
local $Q_1(\ula)$ together to construct a $\cO_{S_1}$-coherent invertible sheaf $\cQ_{\cL_1}$ of $\Gm_{S_1}(S(2,B_1))$.
%
%
%
%

\medskip
Let $VD_{\cL_1}$ be the \em vertical discriminant locus of $\cL_1$, \em that is the set of points $\ula$ in $S_1$ 
at which the principal quadratic ideal $Q_1(\ula)$ is null. 
Let $I_{\cL_1}^{VD}$ be the $\cO_{S_1}$-ideal sheaf locally generated by the coefficients of the 
homogeneous quadratic polynomial $P_1$.
Therefore the co-support of $I_{\cL_1}^{VD}$ is exactly $VD_{\cL_1}$.

\smallskip
The invertible module $\cL$ is said \em constant along the fiber \em if its vertical 
critical locus is $B$.
\begin{lemma}\label{lem:param-2}
- If $\cL$ is not constant along the fibers, 
there exists a locally finite sequence of points blowings-up $\beta_2:(S_2,E_2) \to (S_1,E_1)$ 
so that the total transform $\beta_2^*I_{\cL_1}^{VD}$ is a principal $\cO_{S_2}$-ideal sheaf which is
monomial in the vertical discriminant, the snc-divisor,
\begin{center}
$VD_\cL := \beta_2^{-1} (\cosupp (I_{\cL_1}^{VD}))$
\end{center}
which is normal crossing with the snc-divisor $E_2\cup V_\cL^\str\cup D_\cL^\str$.

- If $\cL$ is constant along the fibers then we define $S_2 :=S_1$, $E_2 := E_1$ and $\beta_2$ 
is the identity map of $S_1$.
\end{lemma}
\begin{proof}
It is again straightforward from principalization and monomialization of ideals.
\end{proof}

\medskip
Let $\cX$ be a $\cO_S$-invertible sheaf of regular sections of $B$. Assume that 
the co-support $\sing(\cX)$ of $\cX$ has codimension $2$ if not empty.
Since $B$ admits a fibered structure, the data of such a $\cX$ is equivalent to have
$\Te := \cX^\vee$ the $\cO_S$-invertible sheaf of sections of $B^\vee$ dual to $\cX$.
Although we may not be able to define an invertible sheaf $\cX^\perp$ of $B$, the \em orthogonal of
$\cX$, \em we can find an invertible sheaf of sections of $\bP B|_{S\setminus \sing(\cX)}$ 
orthogonal to that generated by $\cX$, say $\cX^{\perp,\bP}$, the orthogonal direction to $\cX$. 
After a locally finite composition of points blowings-up $\gm: (S',E') \to S$
we know that $\cX^\gm = \cJ'\cdot\cX'$ where $\cJ'$ is an $\cO_{S'}$-ideal
principal an monomial in $E'$ and $\cX'$ is an invertible $\cO_{S'}$-sub-modules of regular sections of $B^\gm$
with empty co-support. The orthogonal direction invertible sheaf of $\cX'$ is thus well defined over $S'$.

\medskip
We now can state the main result of this section 
\begin{proposition}\label{prop:param-reg}
Let $\cL$ be a non-zero invertible $\cO_S$-module of $\Gm_S (S(2,B))$. 
There exists a locally finite sequence of points blowings-up $\sgm_R: (R,E_R) \to S$ such that,

\medskip\noindent
1) Assuming that $\cL$ is not constant along the fibers. 

\smallskip
i) The conclusions of Lemma \ref{lem:param-1} and 
Lemma \ref{lem:param-2} hold true for $\sgm_R : =\sgm_1\circ\beta_2$. 

\smallskip
ii) Let $B_R := B^{\sgm_R}$.
At each point $\ula_R$ of $R$ there exist 
an open neighbourhood $\cU_R$ of $\ula_R$ and a unique pair (up to permutation)
$\te_1$, $\te_2$ of  regular sections of $B_R^\vee|_{\cU_R}$ such that

(a) For each point $\ulb$ of $\cU_R$ the kernels $\ker (\te_2 (\ulb))$ and $\ker (\te_2(\ulb))$
are both a line of $B_{R,\ulb} = B_{\sgm_R(\ulb)}$ and are orthogonal for the metric 
$\bg^{\sgm_R}$. 

(b) For each point $\ulb$ of $\cU_R$ the lines $\ker(\te_1(\ulb))$ and $\ker(\te_2 (\ulb))$ are the orthogonal "eigen-lines" 
of $\kp_R$, a local generator of $\cL_R := \cL^{\sgm_R,div}$ over $\cU_R$ (up to shrinking $\cU_R$).

(c) The local generator $\kp_R$ of $\cL_R$ writes over $\cU_R$ as  
\begin{equation}\label{eq:param-sum-squares}
\kp_R = \ve_1\cM_1 \te_1\tns\te_1 + \ve_2 \cM_2 \te_2\tns\te_2,
\end{equation}
with $\ve_1,\ve_2 \in \{-1,1\}$ and $\cM_1$, $\cM_2$ are  monomials in $V_\cL^\str \cup E_R$
locally generating $\sgm_R^*\cC_\cL$.

\medskip\noindent
2) Assume $\cL$ is constant along the fibers. Let $\Te$ be any invertible $\cO_S$-sub-module 
of $\Gm_S (B^\vee)$, not everywhere co-linear to $\cL$ and with co-support of codimension two 
if not empty. Let $\cC_\Te$ be its ideal of coefficients.

\smallskip
i) The mapping $\sgm_R$ (depending on $\Te$) factors as $\sgm_R = \sgm_1\circ\beta_R$, 
for a locally finite sequence of point blowings-up 
$\beta_R: (R,E_R) \to (S_1,E_1)$, so that 
Lemma \ref{lem:param-1} holds true.

ii) The ideal $\sgm_R^*\cC_\Te$ is principal and monomial in the 
snc-divisor $V_\Te := \cosupp(\sgm_R^*\cC_\Te)$ 
which is normal crossing with $E_R \cup V_\cL^\str \cup D_\cL^\str$.

iii) Let $\cL_R := \cL^{\sgm_R,div}$ and $B_R:=B^{\sgm_R}$. 
Let $\Te_1$ be the $\cO_R$-module of $\Gm_R (B_R^\vee)$ defined as $(\sgm_R^*\cC_\Te)^{-1} \Te^{\sgm_R}$ 
(with empty co-support). Let $\Te_1^{\perp,\bP}$ be the sub-module of $\Gm_R (\bP B_R^\vee)$ 
orthogonal to $\Te_1^\bP$ of $\Gm_R(\bP B_R^\vee)$ generated by $\Te_1$, 
for the fiber-metric $\bg^\vee\circ\sgm_R$ where $\bg^\vee$ is the fiber metric on $B^\vee$ induced by 
the metric $\bg$.  

Points (a), (b) and (c) of 1-ii) are satisfied by $\Te_1,\Te_1^{\perp,\bP}$ 
substituting 
$D_\cL^\str$ by $V_\Te$.
\end{proposition}
\begin{proof}
Suppose that $\cL$ is not constant along the fibers. We check that $\sgm_R := \sgm_1\circ\beta_R$ 
satisfies all the properties. We start after Lemma \ref{lem:param-2}.

\medskip
Let $\cL_R:=\cL_1^{\beta_R} =(\cL^{\sgm_R})^{div}$.

\medskip
If $\cL$ is not constant along the fibers, then the ideal $\beta_R^*I_{\cL_1}^{VD}$ 
is a principal $\cO_R$-ideal monomial in $VD_\cL$ while 
$\cQ_{\cL_1}^{\beta_R} = J_R\cdot\cQ_R$, where $J_R$ is a principal $\cO_R$-ideal and monomial 
in $VD_\cL$ and $\cQ_R$ is a $\cO_R$-invertible sheaf of regular sections of $S(2,B_R)$ whose 
zero locus is a regular two sheeted ramified covering of $R$. Indeed:
Let $\ula_R$ be a point of $R$ and let $\cU_R\times\R^2$ be a locally trivializing chart 
of $B_R$ with coordinates $(w,z,X,Y)$. We find that 
$$
\beta_R^{-1} (VD_{\cL_1}) :=\{\cM[b(X^2 - Y^2) + a XY] = 0\} , \mbox{ and } 
VD_{\cL_R} :=\{b(Y^2 - X^2) + a XY = 0\}
$$
where $\cM b$ or $\cM a$ is a local generator of the ideal $\beta_R^*I_{\cL_2}^{VD}$ over $\cU_R$.
Thus either $b$ or $a$ is a regular unit over $\cU_R$, so that (up to a rotation in the fibers if $b$ vanishes
at $\ula_R$) we find
$$
VD_{\cL_R} :=\{X^2 + 2cXY - Y^2= 0\}
$$
for a regular function $c$ over $\cU_R$. 
In other words $X^2 + 2cXY - Y^2$ is a local generator of $\cQ_R$ over $\cU_R$ and
$$
X^2 + 2cXY - Y^2 = \te_1(X,Y)\cdot \te_2(X,Y)
$$
for regular sections $\te_1,\te_2$ of $B_R^\vee|_{\cU_R}$, whose kernels are orthogonal
with respect to the metric $\bg_R = \bg\circ\sgm_R$ and 
are respectively directed by $[1:-c\pm\sqrt{1+c^2}] \in \bP B_R$. 
These kernels correspond to the "eigen-directions" of any local generator of $\cL_R$.

Since $\cL$ is not constant along the fibers, so is $\cL_R|_{\cU_R}$. Thus the regular directions fields 
$$
[1:-c(w,z)+\sqrt{1+ c^2(w,z)}] \; \mbox{ and } \;
[1:-c(w,z)-\sqrt{1+ c^2(w,z)}]
$$ 
diagonalize simultaneously over $\cU_R$ any generator of $\cL_R|_{U_R}$, so that over $\cU_R$ we get 
points $(a)$, $(b)$ and $(c)$ immediately. 

\medskip
Point 2). Since $\cL$ is constant along the fibers, so is $\cL_1$  implying that
the quadratic ideal sheaf $\cQ_{\cL_1}$ is the null section of $S(2,B_2)$.

The properties announced for $\Te_1$ and then $\Te_1^{\perp,\bP}$ are straightforward since 
the additional blowings-up $\beta_R : (R,E_R) \to (S_1,E_1)$ are just to "make nice" 
the module $\Te^{\beta_R,div}$.
\end{proof}

\begin{remark}\label{rmk:QR}
In the body of the proof of 1-ii), we show the existence of the invertible $\cO_R$-sheaf $\cQ_R$ 
of regular sections of $S(2,B_R)$, such that the restriction $\cQ_R|_{\cU_R}$ has a global section over $\cU_R$, namely 
$\te_1\cdot\te_2$. The sub-module $\cQ_R$ is a global object, unlike its local generators $\te_1$ and $\te_2$, any of which 
may not be glued to to yield a global object since there may be topological obstructions, such as the Euler class of $B$.
\end{remark}
A careful reading of the above proof, that is tracking down which 
ideals have been principalized and monomialized, also provides the additional following properties.
\begin{corollary}\label{cor:param-reg}
If $\cL$ is not constant along the fibers, we see that in point c), we further obtain

\smallskip
If $\cL_R$ is not degenerate then $\cM_1 = \cM_2$ is a monomial in $V_\cL^\str \cup E_R$
locally generating the ideal $\sgm_R^*\cC_\cL$.

If $\cL_R$ is generically non-degenerate, one of the 
function germ $\cM_1,\cM_2$ is a local monomial in $V_\cL^\str\cup E_R$
locally generating the ideal $\sgm_R^*\cC_\cL$, while the other one is a monomial in 
$V_\cL^\str \cup D_\cL^\str\cup E_R$ and is locally generating the $\cO_R$-ideal $\sgm_R^*I_\cL^D$.

If $\cL_R$ is degenerate, one of germs $\cM_1,\cM_2$ is a monomial in $V_\cL^\str \cup E_R$
locally generating the ideal $\sgm_R^*\cC_\cL$, and the other germ $\cM_1,\cM_2$ is identically zero. 
\end{corollary}
As a final consequence of Proposition \ref{prop:param-reg} (see \cite{KuPa,Gra2}) we find  
\begin{corollary}[see \cite{Gra2}]
Let $\cL$ be a non-zero invertible $\cO_S$-module of $\Gm_S (S(2,B))$. 
There exists a locally finite sequence of points blowings-up $\gm: (S',E') \to S$ 
such that for each point $\ula^\prime$ in $S'$, there exist a neighbourhood $U'$ of $\ula^\prime$ 
and two orthonormal and non-vanishing local sections $\xi_1,\xi_2:U'\to B^\gm$ such that 
at each point $\ulb^\prime$ of $U'$, the $2$-symmetric tensor $\kp'$ locally generating $\cL^\gm$ 
is a sum of squares in the basis $\xi_1(\ulb^\prime),\xi_2(\ulb^\prime)$.

Consequently when $\cL$ admits a global section $\kp$ over $S$, each "eigen-value" 
of the $2$-symmetric tensor $\kp^\gm$, that is the size of each generator $(\cM_i\theta_i)^2$
for $i=1,2$, is a monomial times a local unit nearby $\ula^\prime$, thus analytic.
\end{corollary}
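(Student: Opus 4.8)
\emph{Proof plan.} This corollary is the local, orientability-free version of Proposition~\ref{prop:param-reg}, so the plan is to take $\gm := \sgm_R$ and re-use that proof, retaining only its local content. The first point is that orientability of $\cL$ is never used to \emph{build} the blowings-up $\sgm_1,\beta_2,\beta_3$: their centres are produced by principalizing and monomializing the ideals $\cC_\cL$, $I_\cL^D$, $I_{\cL_2}^{VD}$ of Lemmas~\ref{lem:param-1}--\ref{lem:param-3}, and each of these is attached intrinsically to $\cL$ — replacing a local generator by a unit multiple or by its opposite changes none of $(a,b,c)$, $(ac-b^2)$, $(b_2,c_2)$. In the proof of Proposition~\ref{prop:param-reg}, orientability enters only at the final step, to glue the chart-wise orthogonal direction fields into two \emph{global} invertible sub-modules; since the conclusion to be proved is local at each point of $S'$, that gluing is simply dropped. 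So I set $\gm := \sgm_3$ (recall that $\sgm_3 = \sgm_2$, $\cL_3 = \cL_2$ when $\cL_3$ is constant along the fibres, in which case $D_\cL = \emptyset$).

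Next I would exhibit the orthonormal frame locally. Fix $\ula\in S'=S_3$ and a trivializing chart $\cU\simeq U\times\R^2$ of $\gm^*B=B_3$ near $\ula$, with coordinates $(u,v;X,Y)$ from Remark~\ref{rem:local-euclid} making $\bg\circ\gm$ the Euclidean product in the fibre. If $\cL_3$ is not constant along the fibres, the reduced vertical critical locus over $\cU$ is, by \eqref{eq:rvc-locus-bis}, the pair of directions $\delta_1=[\,b+\sqrt{1+b^2}:1\,]$ and $\delta_2=[\,b-\sqrt{1+b^2}:1\,]$ with $b\in\cO_{S_3,\ula}$; since $1+b^2\geq 1$ these are regular sections $\cU\to\bP B_3$, their Euclidean inner product is $b^2-(1+b^2)+1=0$, and dividing $(b\pm\sqrt{1+b^2},1)$ by its norm (a regular, nowhere-zero function) yields a regular orthonormal frame $\xi_1,\xi_2:\cU\to\gm^*B$. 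If instead $\cL_3$ is constant along the fibres, then on its non-vanishing locus a local generator $\kp_3$ satisfies $\kp_3(\ulb)_\Dlt=\lambda(\ulb)\,|-|_{\bg\circ\gm}^2$ with $\lambda(\ulb):=\bpkp_3(\ulb)(\delta_0)$ ($\delta_0$ any fixed direction) regular and nowhere zero, so \emph{any} local orthonormal frame $\xi_1,\xi_2$ works; where $\gm^*\cL$ vanishes the claim is empty.

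Then the diagonalization is formal. Writing $\gm^*\cC_\cL=J\cdot K$ with $J$ the monomial part supported on the exceptional divisor $E_R$, a local generator of $\gm^*\cL$ is $\kp'=\mu\,\kp_3$, $\mu$ a local monomial generating $J$ and $\kp_3$ a local generator of $\cL_3$. By point (b) of Proposition~\ref{prop:param-reg} I may label so that $\ker\te_i(\ulb)=\R\xi_i(\ulb)$, whence $\te_i=c_i\,\xi_j^\flat$ for $\{i,j\}=\{1,2\}$, where $\xi_j^\flat:=\langle\xi_j,-\rangle_{\bg\circ\gm}$ and $c_i=\te_i(\xi_j)$ is a nowhere-vanishing regular germ ($\Te_i(\cL)$ being nowhere vanishing). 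Substituting into \eqref{eq:param-sum-squares} gives $\kp'=\ve_2(\mu\cM_2)c_2^2\,\xi_1^\flat\tns\xi_1^\flat+\ve_1(\mu\cM_1)c_1^2\,\xi_2^\flat\tns\xi_2^\flat$, diagonal — a sum of squares — in the orthonormal frame $(\xi_1,\xi_2)$; in the constant-along-fibres case $\kp'=\mu\lambda\,\bg\circ\gm$, again diagonal there. For the last assertion, a global $2$-symmetric tensor $\kp$ generates an orientable $\cL$ and then $\sgm_R=\gm$; the two ``eigen-values'' of $\kp\circ\sgm_R=\mu\,\kp_3$ read off the last display, $\ve_2(\mu\cM_2)c_2^2$ and $\ve_1(\mu\cM_1)c_1^2$, are each a monomial in $V_\cL^\str\cup D_\cL^\str\cup E_R$ times the local unit $c_i^2$, the null germ occurring for one of them when $\cL$ is everywhere degenerate.

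Beyond Proposition~\ref{prop:param-reg} the argument is routine; the one step deserving genuine care — the ``main obstacle'' such as it is — is the first one: checking that both the sequence of blowings-up and the two local diagonalizing direction fields supplied by the proof of Proposition~\ref{prop:param-reg} are manufactured without ever invoking orientability, so that the orientability hypothesis may be dropped at the sole cost of keeping the conclusion local at each point.
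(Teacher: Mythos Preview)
Your proposal is correct and follows exactly the route the paper intends: the Corollary is stated without proof as an immediate consequence of Proposition~\ref{prop:param-reg}, and you have correctly identified and justified the one non-trivial point, namely that the blowings-up $\sgm_1,\beta_2,\beta_3$ and the chart-wise orthogonal direction fields in that proof are constructed without using orientability, which enters only in the final gluing step you may discard for a local conclusion. One tiny slip: in the constant-along-fibres case $\sgm_R=\sgm_3\circ\beta_R$ need not equal your $\gm=\sgm_3$, but this does not affect the argument since the eigen-value claim for the second assertion goes through verbatim with $\sgm_R$ in place of $\sgm_3$.
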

%
%
%
%
%
%
%
%
%
%
%
%
%
%
%
%
%
%
%
%
%
%
%
%
%
%
%
%
%
%
%
%
%
%
%
%
%
%
%
%
\section{Resolution of singularities of plane singular foliations}\label{section:res-foliat}
We present here some elements of the classical reduction of singularities of plane foliations, 
aiming at our paper be as self-contained as can reasonably be. The 
material presented below is well known and can be found for instance in  \cite{Se,Dum,vdE,Ca1,CCD}

\medskip
Let $\cO_2:= \cO_{\R^2,\bo}$ be the local $\R$-algebra of regular function germs at 
$\bo$ the origin of $\R^2$, with maximal ideal $\bm_2$.
Let $\Omg_2^1$ be the $\cO_2$-module of regular differential $1$-form germs at $\bo$.

\smallskip
Let $\xi$ be a germ of regular vector field at the origin $\bo$ of $\R^2$. Given any regular local 
coordinates system $(x,y)$ centered at $\bo$, the vector field writes as $\xi = a (x,y) \dd_x + b(x,y) \dd_y$ 
where $a,b \in \cO_2$. Since we are only interested in foliations (phase portraits), 
up to dividing $\xi$ by $gcd (a,b)$, we can assume that $a$ and $b$ have no common factor
so that any vector field of the form $Unit\cdot \xi$ gives the same foliation as $\xi$. 
The vector field $\xi$ comes with (up to the multiplication by a regular unit) a unique dual regular 
differential form defining the same foliation, namely $\omg = b \rd x  - a \rd y$. 
Since $\gcd(a,b) =1$, we have 
\begin{equation}\label{eq:milnor-foliat}
\iota (a,b,\bo) := \dim_\R \cO_2/(a,b) < +\infty.
\end{equation}
\begin{definition}\label{def:foliation}
1) A \em germ of a plane foliation $\cF$ \em at the origin of $\R^2$ is the data of an invertible $\cO_2$-sub-module
$\cD_\cF$ of $\Omg_2^1$, which is finite co-dimensional at the origin, that is satisfying Equation 
(\ref{eq:milnor-foliat}). If $\cD_\cF$ is generated by $\omg = b\rd x - a\rd y$, the number $\iota (a,b,\bo)$ depends only
on the invertible sheaf generated by $\omg$, therefore we write $\iota(\cF,\bo)$ instead.
\\
2) Let $S$ be a regular surface. A foliation $\cF$ on $S$ is the data of a
non-zero $\cO_S$-invertible sub-module $\cD_\cF$ of $\Omg_S^1$ such that 
at each point $\bp$ of $S$ there exists a regular diffeomorphism germ $\phi: (S,\bp) \to (\R^2,\bo)$ 
such that the $\cO_2$-submodule $\phi_*\cD_\cF$ of $\Omg_2^1$ is generating a germ of plane foliation 
at $\bo$. 
\end{definition}
The invertible sub-module $\cD_\cF$ of $\Omg_2^1$ corresponding to the germ of foliation $\cF$
generated by the vector field germ $\xi$ is $\cO_2 \omg$, for $\omg$ its dual form. Point 1) of the 
Definition implies that the  germ of vector field $\xi$ (equivalently $\omg$) may only vanish at $\bo$, 
so that for $\bp$ close enough to $\bo$ but not $\bo$, we find $\iota(\cF,\bp) =0$.

\smallskip
Let $\cF$ be a foliation on a regular surface $S$. The \em ideal of coefficients of \em $\cF$ is the 
$\cO_S$-ideal $\cC_\cF$ obtained by the evaluation of any local generator of $\cD_\cF$ along the 
germs of regular vector field on $S$. The \em singular locus of $\cF$ is the sub-variety 
\em defined as 
\begin{center}
$\sing (\cF) := \cosupp \, \cC_\cF$, 
\end{center}
and is of codimension $2$, if not empty.
A plane foliation will be said \em singular \em if the sub-variety $\sing(\cF)$ is not empty.
\begin{definition}[\cite{Se,vdE,Dum,CCD,IlYa}]\label{def:simple-sing}
Let $\cF$ be a germ of plane foliation with an isolated singularity at the origin. 
The origin $\bo$ is called a \em simple singularity \em of 
$\cF$, if there exist local coordinates $(x,y)$ centered at the origin such that the local generator $\omega$ writes
\begin{center}
$\omega = \lambda y \rd x - \mu x \rd y + \theta$,
\vspace{4pt}
\end{center}
with $\lambda \in \R, \mu \in \R^*$, and $\mu^{-1}\lambda \notin \Q_{>0}$, 
and $\theta \in \bm_2^2 \Omega^1_2$, 
\end{definition}

\begin{definition}
Let $S$ be a regular surface and $\cF$ be a foliation 
on $S$. Let $\bp$ be a point of $S$. 
\\
- Let $(H,\bp)$ be a hypersurface germ, regular at $\bp$. It is called \em invariant (or non-di-critical) at $\bp$ 
with respect to $\cF$, \em if it is a union of leaves of $(\cF,\bp)$. If it is not invariant at $\bp$, it is called  
\em di-critical at $\bp$. \em
\\
- A normal crossing divisor $D$ of $S$ containing $\bp$ is \em invariant at $\bp$ w.r.t. $\cF$ \em if each 
irreducible component of $(D,\bp)$ is invariant at $\bp$ w.r.t. $\cF$.
\\
- If $\bp$ is a regular point of $\cF$, the foliation $\cF$ is \em normal crossing with $D$ at  $\bp$ 
\em if the germ $(D,\bp)$ is not invariant, and the union $\cL_\bp\cup D$, of the leaf $\cL_\bp$ through $\bp$ with $D$,
is the germ of a normal crossing divisor at $\bp$.  
\end{definition}
We introduce the following well known convenient
\begin{definition}\label{def:adapt-foliat}
A foliation $\cF$ of $S$ is \em adapted to the nc-divisor $D$ of $S$ \em if: 
\\
- Each singular point of $\cF$ is simple; 
\\
- At a regular point $\bp$ of $D$, regular point of $\cF$, the foliation $\cF$ is normal crossing with the germ $(D,\bp)$;
\\
- At a corner point $\bp$ of $D$, regular point of $\cF$, one local component of $(D,\bp)$ is invariant at $\bp$ w.r.t. $\cF$ and the 
other one is normal crossing with $\cF$ at $\bp$;
\\
- At a point $\bp$ of $D$, singular point of $\cF$, the germ $(D,\bp)$ is invariant at $\bp$ w.r.t. $\cF$.
\end{definition}
\noindent
{\bf Reminder on notations.}
\em 
In this section and in the following one, we recall that when $\te$ is a differential $1$-form on a manifold $N$
(or a sub-module of $\Omega_N^1$), the notation $\sgm^* \te$ means the pull back by a given regular mapping 
$\sgm:M\to N$ in the sense of differential topology, that is $\sgm^*\te :=\te\circ D\sgm$ where $D\sgm$ 
is the differential mapping of $\sgm$. 
\em 
 
\medskip
Let $\cF$ be a singular foliation at the origin $\bo$ of $\R^2$.

Let $\pi: S_0:=[\R^2,\bo] \to \R^2$ be the blowing-up of the origin $\bo$ and let $E_0$ 
be the exceptional curve $\pi^{-1}(\bo)$. Let $I_{E_0}$ be the $\cO_{S_0}$-ideal 
of the regular function germs vanishing on $E_0$. There exists a largest  positive integer $m$
such that $ I_{E_0}^{-m} \cdot \pi^*\cD_\cF$ is a $\cO_{S_0}$-invertible sub-module of $\Omg_{S_0}^1$,
which is finite co-dimensional everywhere.
\begin{remark}\label{rem:noether}
Let $\beta: (S,E) \to (\R^2,\bo)$ be a finite composition of point blowings-up, where $S$ is a regular surface and 
$E := \beta^{-1}(\bo)$ is a snc divisor.
The ideal $\cC_{\beta^*\cD_\cF}$ of coefficients of 
the $\cO_S$-sub-module $\beta^*\cD_\cF$ decomposes into a product of $\cO_S$-ideals $J\cdot K$, where 
the ideal $J$ is principal and monomial in the exceptional divisor $E$, while the ideal $K$ (with co-support in $E$) 
has finite co-dimension, namely $\dim_\R \cO_{S,\bp}/K < + \infty$, for any point $\bp$ of $S$. 
\end{remark}
Remark \ref{rem:noether} leads to the following
\begin{definition}\label{def:pull-back-foliat}
Let $\beta: (S,E) \to (\R^2,\bo)$ be a finite composition of point blowings-up.
\\
Using the notations of Remark \ref{rem:noether}, the \em pulled-back foliation $\beta^*\cF$ of $\cF$, \em is given
by the invertible $\cO_S$-sub-module $\cD_{\beta^*\cF}:= J^{-1}\beta^*\cD_\cF$ of $\Omega_S^1$.
For a local generator $\omg$ of $\cF$ at $\bo$, every point $\bp$ of $S$ admits an open neighbourhood $U$
over which there exist a monomial $\cM$ in the exceptional divisor $E$ (generating $J$ locally over $U$) and 
a local generator $\te$ of $\cD_{\beta^*\cF}$ over $U$ such that $\cM\te = \beta^*\omg$. 
The local generator $\cM^{-1}\beta^*\omg$ is called \em the strict transform of $\omg$ 
under the blowing-up $\beta$. \em
\end{definition}
We can now present the theorem of reduction of singularities of singular plane foliation (\cite{Se,vdE,Ca1,CCD})
in the form which is most convenient for our later use. 
The global reduction of singularities of plane foliation being deduced from the local one,
we only present the latter one in the following classical form:
\begin{theorem}[\cite{Se,vdE,Dum,Ca1,CCD,IlYa}]\label{thm:desing-foliat}
Let $\cF$ be a germ of singular plane foliation at the origin $\bo$ of $\R^2$.
There exists a finite composition of points blowings-up $\pi: (S',E') \to (\R^2,\bo)$ such that 
each point of $\sing(\cF',E')$ of the pulled-back foliation $\cF':=\pi^* \cF$ is a simple singularity of $\cF'$
adapted to $E'$. 

\smallskip
Moreover if $\beta$ is the point blowing-up $(S'',E'') \to (S',E'\cup\{\bp'\})$ with center $\bp'$, 
then $\cF'' := \beta^*\cF'$ only admits simple singularities adapted to $E''$.   
\end{theorem}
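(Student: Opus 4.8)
The plan is to prove this statement — the theorem on reduction of singularities of plane foliations, together with the stability clause — in two parts. First, the existence of a finite composition $\pi:(S',E')\to(\R^2,\bo)$ making every point of $\sing(\cF',E')$ a simple adapted singularity; second, the stability assertion that blowing up further at a point $\bp'$ of $E'$ preserves the property of having only simple adapted singularities. For the existence part, I would argue by well-founded induction on a complexity invariant attached to the germ. The natural candidate is the intersection number $\iota(\cF,\bo)$ from Equation (\ref{eq:milnor-foliat}), which lives in $\N$, possibly refined lexicographically by the algebraic multiplicity $\nu_\bo$ of a local generator. If $\bo$ is already a simple singularity there is nothing to do. Otherwise one performs the point blowing-up $\pi_1:([\R^2,\bo],E_0)\to(\R^2,\bo)$ and passes to the pulled-back foliation $\pi_1^*\cF$ as in Definition \ref{def:pull-back-foliat} (dividing out the monomial factor in $E_0$ coming from Remark \ref{rem:noether}). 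The key quantitative input is Noether's Lemma \ref{lem:noether}, which, in both the non-di-critical and di-critical cases, expresses $\iota(\cF,\bo)$ as a fixed positive quantity ($\nu^2-(\nu+1)$ or $(\nu+1)^2-(\nu+2)$) plus $\sum_{\bp\in\pi^{-1}(\bo)}\iota(\pi_1^*\cF,\bp)$; since a non-simple singularity forces $\nu\geq 1$ and hence that fixed quantity is $\geq 0$ (in fact the argument requires a strict drop, which is where one needs the classical fact that $\nu=1$ with the singularity non-simple still allows one to decrease the invariant after finitely many blowings-up, or one uses the Seidenberg-type refinement with a second coordinate in the invariant). Thus each component of the strict transform carries a germ of strictly smaller invariant, and the induction terminates. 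At termination every singular point is simple; one then appeals to the standard further fact — also obtained from Noether's Lemma and the normal-crossing analysis of $E'$ — that finitely many additional blowings-up render the foliation normal crossing with the exceptional divisor at every non-singular point of $E'$ and make every remaining singularity adapted in the sense of Definition \ref{def:adapt-simple-sing}, i.e. lying on $E'$ with all local components of $E'$ non-di-critical. The di-critical components are handled by observing that after enough blowings-up a di-critical component meets the foliation transversally at a generic point and at worst at a simple adapted singularity.

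For the stability clause, I would argue directly by a local computation at $\bp'\in\sing(\cF',E')$, which by the first part is a simple singularity of $\cF'$ adapted to $E'$. Choose local coordinates $(x,y)$ centered at $\bp'$ as in Definition \ref{def:simple-sing}, so the local generator is $\omega=\lambda y\,\rd x-\mu x\,\rd y+\theta$ with $\lambda\in\R$, $\mu\in\R^*$, $\lambda\neq\mu$, $\mu^{-1}\lambda\notin\Q_{>0}$, and $\theta\in\bm_2^2\Omega_2^1$; moreover $E'$ locally is one of $\{x=0\}$, $\{y=0\}$, or $\{xy=0\}$, each component being non-di-critical. The multiplicity $\nu_{\bp'}(\omega)$ of the linear-part $1$-form is $1$, so the blowing-up $\beta$ at $\bp'$ divides out exactly one power of the exceptional ideal, and one computes the strict transform $\cF''=\beta^*\cF'$ in each of the two standard charts $x=x_1$, $y=x_1y_1$ and $x=x_2y_2$, $y=y_2$. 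A direct substitution shows the new exceptional component $E_0'=\beta^{-1}(\bp')$ is non-di-critical (because $\lambda$ and $\mu$ are not both zero and $\lambda\neq\mu$, the radial direction is not a leaf), and that the only singular points of $\cF''$ on $E_0'$ are the origins of the two charts, where one reads off, after discarding the $\bm^2$-perturbation, generators of the form $\lambda y_1\,\rd x_1-(\mu-\lambda)x_1\,\rd y_1+\cdots$ and $(\lambda-\mu)x_2\,\rd y_2^{\,\mathrm{(sign)}}+\cdots$; the eigenvalue ratios become $(\mu-\lambda)/\lambda$ and $\lambda/(\mu-\lambda)$ (up to sign and swapping), which are again finite, nonzero, not equal to $1$, and — this is the crucial arithmetic check — not in $\Q_{>0}$, precisely because $\mu^{-1}\lambda\notin\Q_{>0}$ and $\lambda\neq\mu$ (the map $t\mapsto(t-1)/t$ or its inverse sends $\R\setminus\Q_{>0}\cup\{0,1,\infty\}$ into itself appropriately). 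One also checks the new component together with the strict transforms of the old components of $E'$ remain non-di-critical and normal crossing at these two points, so both points are simple singularities of $\cF''$ adapted to $E''$; away from $\bp'$ nothing changes, and at the generic points of $E_0'$ the foliation is transverse to $E_0'$ and normal crossing with $E''$.

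The main obstacle I expect is the strict-decrease step in the termination argument when the algebraic multiplicity $\nu$ of the generator is $1$ but the singularity is not simple (for instance a saddle-node, or a singularity with an eigenvalue ratio in $\Q_{>0}$, or a nilpotent linear part). In that regime Noether's Lemma alone gives $\iota(\cF,\bo)=\iota(\pi_1^*\cF,\bp)$ at the unique new singular point, so the Milnor number does not drop and one must invoke the finer analysis from the references \cite{Se,vdE,Ca1,CCD}: either enrich the invariant (adding the multiplicity of the restriction of $\omega$ to the exceptional divisor, or a Newton-polygon datum), or quote the classical theorem that a nilpotent/degenerate linear part is resolved in finitely many steps. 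Since the excerpt explicitly allows us to assume Theorem \ref{thm:desing-foliat}'s cited sources, the cleanest route is to cite \cite{Se,vdE,Ca1,CCD} for the existence part and reserve the original work for the explicit chart computation establishing stability, which is the genuinely self-contained piece.
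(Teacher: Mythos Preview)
The paper does not prove this theorem. It is stated with attribution to \cite{Se,vdE,Ca1,CCD} and used as a known classical input; no proof or sketch follows the statement. So there is no ``paper's own proof'' to compare against.

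Your outline is a faithful account of the classical argument from those references: induction on $\iota(\cF,\bo)$ via Noether's Lemma~\ref{lem:noether} for the existence part, with the honest admission that the $\nu=1$ non-simple case (saddle-node, positive rational ratio, nilpotent linear part) is precisely where the intersection number can fail to drop and one must enrich the invariant or defer to the cited sources. For the stability clause your chart computation is essentially correct; it is in fact the same computation the paper carries out later in the proof of Lemma~\ref{lem:adapt-1}, where it serves a related purpose. Two small corrections: (i) the new eigenvalue ratios after blowing up $\lambda y\,\rd x-\mu x\,\rd y$ are $(\lambda-\mu)/\mu$ and $\lambda/(\mu-\lambda)$ in the two charts, not the pair you wrote, though the verification that neither lies in $\Q_{>0}$ goes through identically; (ii) the ``Moreover'' clause allows $\bp'$ to be \emph{any} point of $S'$, not only a simple singular one, so strictly you must also record that blowing up a regular point of $\cF'$ produces at most one new singularity, with linear part $y\,\rd x + x\,\rd y$ (ratio $-1$), which is simple and adapted --- the paper handles this case in the proof of Lemma~\ref{lem:adapt-1}.
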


We end the section with the normal form of a local generator of a "reduced" germ of singular 
plane foliation $\cF$ with simple singularities adapted to an exceptional divisor $E$ as in Theorem \ref{thm:desing-foliat}. 
By Remark \ref{rem:noether}, the exceptional divisor $E$ contains  the singular locus $\sing (\cF)$.

\smallskip\noindent
$\bullet$ Let $\bp \notin E$, then $\omega (\bp) \neq 0$. 

\smallskip\noindent
$\bullet$
Suppose $\bp \in E \setminus\sing (\cF)$. There exist local coordinates $(u,v)$ centered at $\bp$ such that 
\begin{center}
$\{u=0\} \subset (E,\bp) \subset  \{uv =0\}$ \; and \; $\omega (\bp) \neq 0$. 
\end{center}
If $(E,\bp) =\{u=0\}$ is invariant for $\cF$, a local generator is of the form $\omg =\rd u + u(\cdots)\rd v$.
\\
If $(E,\bp) =\{u=0\}$ is normal crossing with $\cF$, a local generator is of the form $\omg = \rd v$ 
(up to a change of coordinate of the form $\brv = v + F$, with $F\in \bm_\bp$).  
\\
If $(E,\bp) =\{uv=0\}$ and if a local generator is of the form $\rd u + Unit\cdot \rd v$, we check 
that blowing-up $\bp$ will give a local generator of the pulled-backed foliation such that at each 
of the new two corners one of the new exceptional divisor and the strict transform of the corresponding old 
component through $\bp$, is invariant and the other one is di-critical. Thus (up to blowing-up the point $\bp$), 
we deduce that, up to permuting $u$ and $v$, a local generator is given by $\rd u + u (\cdots)\rd v$ 
(see Lemma \ref{lem:adapt-1} and Lemma \ref{lem:adapt-2} for details).

\smallskip\noindent
$\bullet$
Suppose $\bp \in E \cap\sing (\cF)$. Thus each component of $E$ must be invariant.
There exist local coordinates $(u,v)$ centered at $\bp$ such that 
$\{u=0\} \subset (E,\bp) \subset  \{uv =0\}$ and there exists another set of local coordinates $(x,y)$ centered at $\bp$ 
such that $\omg = \lambda x\rd y - y \rd x + \te$ where $\te \in \bm_\bp^2
\Omega_\bp^1$ and $\lambda \notin \Q_{>0}$.
\\
If $(E,\bp) =\{uv=0\}$, since it is non-di-critical, we deduce also that, up to permuting $u$ and $v$, we get
$\omg = v\rd u + u(\cdots)\rd v$.
\\
If $(E,\bp) =\{u=0\}$, then a local generator writes as $\omega = uA\rd v + (uB + v^k \phi(v))\rd u$, 
with $A,B \in \cO_\bp$, where $\phi$ is an analytic unit in a single variable and $k=1$ if $A(\bp) =0$.
When $A(\bp) \neq 0$, a local generator is of the form  
\begin{equation}\label{eq:form-1}
u\rd v + (v^k \phi(v)+uB)\rd u
\end{equation}
and when $A(\bp) = 0$, it is of the form 
\begin{equation}\label{eq:form-2}
(v+b_0 u)\rd u + u\te
\end{equation}
where $\te\in\bm_\bp\Omg_\bp^1$ and $B\in\cO_\bp$ and $b_0\in \R$.
%
%
%
%
%
%
%
%
%
%
%
%
%
%
%
%
%
%
%
%
%
%
%
%
%
%
%
%
%
%
%
%
%
%
%
%
%
%
\section{Pairs of singular foliations and singular foliation adapted to nc-divisors}\label{section:pairs-foliat}
The material presented in this section is known folklore of desingularization of singular plane 
foliations (see \cite{Ca1,CCD,IlYa}). In order to have our paper reasonably self-contained, 
we present the results, that we will use in the demonstration of Theorem \ref{thm:main}, with proofs.

\medskip 
Any singular foliation can be resolved 
into a singular foliation with simple singularities adapted to the exceptional divisor of the 
resolution (Section \ref{section:res-foliat}). Simple adapted singularities 
transform under blowings-up either in simple adapted singularities or in regular points. 

\smallskip
An additional property of the resolution of a singular foliation to wish for 
is a good behavior of the foliation with respect to (the pull-back of) a given curve. 
A consequence of Definition \ref{def:adapt-foliat} is the next
\begin{lemma}[see also {\cite[Section 8]{IlYa}}]\label{lem:adapt-1}
Let $D$ be a nc-divisor of a regular surface $S$ and let $\cF$ be a foliation on $S$ adapted to $D$.
Let $\beta: (S',E') \to S$ be the blowing-up with center a given point $\bp$ of $S$.
The pulled-back foliation $\beta^*\cF$ is adapted to the nc-divisor $\beta^{-1} (D)$.
\end{lemma}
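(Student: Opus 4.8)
The claim is local at each point of $D' := \beta^{-1}(D)$, so I would fix a point $\bq \in E'$ and distinguish cases according to whether $\bq$ is a smooth point or a corner point of $D'$, and according to the local type of $\cF$ near $\bp$ prescribed by the hypothesis that $\cF$ is adapted to $D$. Since $\beta$ is the blowing-up of a single point $\bp$, the only point where anything new can happen is on the exceptional curve $E_0 := \beta^{-1}(\bp)$, because away from $E_0$ the map $\beta$ is a local diffeomorphism carrying $(D,\cdot)$ to $(D',\cdot)$ and $\cF$ to $\beta^*\cF$, so the "adapted" property is transported verbatim. Hence the work is entirely over $E_0$, and there the divisor $D' = D^\str \cup E_0$ is governed by how many branches of $D$ passed through $\bp$ (zero, one, or two, since $D$ is normal crossing) together with the behavior of $\beta^*\cF$ along $E_0$ (di-critical or not), which is controlled by Noether's Lemma \ref{lem:noether} and the normal-form discussion at the end of Section \ref{section:res-foliat}.

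\textbf{Key steps.} First, I would record the basic dichotomy for $E_0$: either $E_0$ is invariant (non-di-critical) for $\beta^*\cF$ or it is di-critical; in the di-critical case $\beta^*\cF$ meets $E_0$ transversally at all but finitely many points, and at those finitely many exceptional points one checks directly that $\cF$ was regular at $\bp$ with a contact condition, handled by the computations already cited around Equations (\ref{eq:form-1})--(\ref{eq:form-2}). Second, case $\bp \notin D$: then $D^\str$ does not meet $E_0$, so near any $\bq \in E_0$ the divisor is just $E_0$ itself, and I must check that at a smooth point $\bq$ of $E_0$ the foliation $\beta^*\cF$ is either normal crossing with $E_0$ or has an adapted singularity there, while at the (at most two) corner points... but there are no corner points of $D'$ on $E_0$ in this case since $D^\str$ is disjoint from $E_0$; so only the smooth-point condition is needed, and it follows from Theorem \ref{thm:desing-foliat}'s local normal forms (if $E_0$ is invariant one gets $\rd u + u(\cdots)\rd v$ or a simple adapted singularity; if di-critical one gets transversality or blows up once more as in the displayed discussion). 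Third, case $\bp$ is a smooth point of $D$, so exactly one branch $H$ of $D$ passes through $\bp$: then $D' = H^\str \cup E_0$, the point $H^\str \cap E_0$ is the unique new corner, and since $\cF$ adapted to $D$ forces either $(D,\bp)$ normal crossing with $\cF$ or $\bp$ an adapted singularity with $H$ invariant, I trace through the blow-up in local coordinates $(u,v)$ with $H = \{v=0\}$ to see that at the new corner one component is invariant and the other normal crossing, or it is again an adapted singularity — exactly the corner alternative of Definition \ref{def:adapt-foliat}; and at smooth points of $E_0$ and of $H^\str$ the previous arguments apply. Fourth, case $\bp$ is a corner of $D$ with two branches $H_1, H_2$: by adaptedness either $\bp$ is an adapted simple singularity (so both $H_i$ invariant, giving after blow-up $v\rd u + u(\cdots)\rd v$ on $E_0$ with $E_0$ invariant and all strict transforms invariant, hence all new corners are adapted singularities or corners of the second type) or $\bp$ is regular with $H_1$ invariant and $H_2$ normal crossing; in the latter subcase I blow up and check, using the normal form for a regular point with one invariant and one transverse branch, that $E_0$ becomes invariant and the configuration at the two new corners is of the allowed corner type — possibly, as the text already warns, after one further auxiliary blow-up of a corner (Lemma \ref{lem:adapt-2}), which is permissible since "adapted" only needs to hold after the blow-up $\beta$ under consideration and the statement's inductive use allows re-reading it.

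\textbf{Main obstacle.} The genuinely delicate case is the di-critical one together with the corner subcase where one branch of $D$ is transverse to $\cF$: here $E_0$ (or its interaction with a transverse branch) can fail to be immediately invariant, and one must verify that the finitely many bad points on $E_0$ (the exceptional points of a di-critical divisor, or the image of the transverse branch) are either genuine adapted singularities or get cleaned up; this is precisely where the explicit local normal forms $\rd u + Unit\cdot\rd v$ versus $\rd u + u(\cdots)\rd v$ and the "blow up the corner once more" maneuver from the end of Section \ref{section:res-foliat} do the work. I expect the bookkeeping of which component is di-critical and which is invariant after blow-up, and reconciling it with Definition \ref{def:adapt-foliat}'s corner clause, to be the only subtle point; the rest is a direct transcription of standard blow-up formulas, and I would relegate the routine coordinate computations to a reference to Lemmas \ref{lem:adapt-1}--\ref{lem:adapt-2} as the text itself suggests.
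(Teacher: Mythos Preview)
Your case decomposition (by the number of branches of $D$ through $\bp$ and the local type of $\cF$ at $\bp$) matches the paper's, and the reduction to points of $E_0$ is correct. There is, however, a genuine gap in the corner subcase.

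When $\bp$ is a corner of $D$ and a regular point of $\cF$ (one branch invariant, one normal crossing), you hedge: the configuration at the new corners is of the allowed type ``possibly \ldots after one further auxiliary blow-up of a corner (Lemma \ref{lem:adapt-2}), which is permissible since \ldots the statement's inductive use allows re-reading it.'' This is not permissible: the lemma asserts adaptedness after \emph{this single} blow-up $\beta$, and invoking Lemma \ref{lem:adapt-2} here would in any case be circular. The paper closes this case by direct computation: with $(D,\bp)=\{uv=0\}$ and generator $\te=\rd u + uA\,\rd v$, the chart $(x,y)\mapsto(xy,y)$ gives $\beta^*\te = y\,\rd x + x(1+yA)\,\rd y$, a simple singularity with eigenvalue ratio $-1\notin\Q_{>0}$ and both branches invariant, hence an adapted singularity with no further blow-up needed; the other chart gives $\rd x + xB\,\rd y$, a regular point with $\{x=0\}$ invariant and $\{y=0\}$ normal crossing. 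Your discussion of the di-critical dichotomy for $E_0$ and the ``blow up once more'' maneuver from the end of Section \ref{section:res-foliat} is misplaced: since $\cF$ is already adapted to $D$, the bad local form $\rd u + Unit\cdot\rd v$ at a corner never occurs as input, so that maneuver is irrelevant here.

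A minor point: when $\bp\notin D$, the divisor $\beta^{-1}(D)=D^\str$ does not meet $E_0$, so there is nothing to check on $E_0$; your analysis of that case is superfluous.
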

\begin{proof}
If $\bp$ does not belong to $D$, it is true. 

\medskip
Let $(u,v)$ be local coordinates centered at the corner point $\bp$ of $D$ and adapted to $D$, 
thus $(D,\bp) = \{uv=0\}$. We use the normal forms presented at the end of
Section \ref{section:res-foliat}.

\smallskip
If $\bp$ is a singular point of $\cF$ adapted to $D$, then, up to permuting $u$ and $v$, 
a local generator of $\cF$ at $\bp$ is $\te = v\rd u  + u A \rd v$ with $A\in\cO_\bp$ such 
that $- A(\bp)$ is not in $\Q_{>0}$.

In the chart $(x,y) \to (x,xy)$ of the blowing-up $\beta$ of the point $\bp$, we find 
$$
\beta^{-1} (D) = \{xy =0\}, \; \mbox{ and } \;\beta^*\te = x \cdot Unit \cdot (y \rd x + x B \rd y),
$$
so that in this chart $\beta^*\cF$ is adapted to $\beta^{-1} (D)$. 
In the chart $(x,y) \to (xy,y)$, we check similarly that
$\beta^*\cF$ is also adapted to $\beta^{-1} (D)$. 

\smallskip
If $\bp$ is a regular point of $\cF$ then, up to permuting $u$ and $v$, 
a local generator of $\cF$ at $\bp$ is $\te = \rd u  + u A \rd v$ with $A\in\cO_\bp$ such 
that $- A(\bp)\notin \Q_{>0}$.

In the chart $(x,y) \to (x,xy)$ of the blowing-up $\beta$ of the point $\bp$, we find 
$$
\beta^{-1} (D) = \{xy =0\}, \; \mbox{ and }  \;   \beta^*\te = Unit \cdot (\rd x + x B \rd y)
$$
so that in this chart $\beta^*\cF$ is adapted to $\beta^{-1} (D)$. 
In the chart $(x,y) \to (xy,y)$, we get
$$
\beta^{-1} (D) = \{xy =0\}, \;  \mbox{ and }  \;   \beta^*\te = Unit \cdot (y \rd x + x B \rd y)
\; \mbox{ with } \; B(0,0) = 1
$$
so that in this chart $\beta^*\cF$ is also adapted to $\beta^{-1} (D)$. 

\medskip
When $\bp$ is regular point of $D$, similar computations will lead to the stated conclusion, using 
the normal forms at the end of Section \ref{section:res-foliat}.
\end{proof}
Let $D$ be a (s)nc-divisor of $S$ and let $\cF$ be a foliation on $S$.   
Let $NA(\cF,D)$ be the subset of points of $S$ where the foliation $\cF$ is not adapted to the germ of 
$D$ at this point. It is a real analytic set which is isolated, when not empty.
\begin{lemma}[see also {\cite[Section 8]{IlYa}}]\label{lem:adapt-2}
There exists $\sgm_1: (S_1,E_1) \to S$, a locally finite sequence of points blowings-up, such 
that the pulled-back foliation $\sgm_1^*\cF$ is adapted to the (s)nc-divisor $E_1\cup D^\str$. 
\end{lemma}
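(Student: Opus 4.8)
The plan is to combine the resolution of singularities of the foliation $\cF$ (Theorem \ref{thm:desing-foliat}) with a subsequent monomialization/ordering step that forces good position with respect to $D^\str$, and then invoke the stability Lemma \ref{lem:adapt-1} to see that nothing is spoiled by later blowings-up. First I would note that $NA(\cF,D)$ is a locally finite (hence locally finite, discrete) analytic subset of $S$, so working locally at a point $\bp$ of $S$ and gluing is legitimate; at points outside $D\cup\sing(\cF)$ there is nothing to do. So fix $\bp\in S$ and let $(x,y)$ be local coordinates centered at $\bp$; we may also assume, after a preliminary finite sequence of blowings-up provided by Theorem \ref{thm:desing}, that $D^\str$ has only regular (smooth) components meeting normally, i.e. $(D,\bp)$ is either empty, $\{x=0\}$, or $\{xy=0\}$.

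Next I would apply Theorem \ref{thm:desing-foliat} to resolve the germ $\cF$ at $\bp$ by a finite composition of point blowings-up $\pi:(S',E')\to (S,\bp)$ so that every point of $\sing(\pi^*\cF,E')$ is a simple singularity adapted to $E'$. The point now is to understand the total transform $\pi^{-1}(D)=E'\cup D^\str$ and to correct the places where $\pi^*\cF$ fails to be adapted to it. There are two sources of trouble. First, a component $H$ of $D^\str$ may be \emph{di-critical} for $\pi^*\cF$ (the leaves of $\pi^*\cF$ are transverse to $H$ but the configuration $H\cup(\text{leaf})$ is not normal crossing, e.g. tangency), or a corner of $E'\cup D^\str$ may be a regular point of the foliation whose separatrix is not normal crossing with the two divisor components. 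Each such bad point is isolated, and by the normal forms at the end of Section \ref{section:res-foliat} (cases $\rd u + Unit\cdot\rd v$ at a corner, or $\omg=\rd v$ with a tangency) one checks, exactly as in the computations of Lemma \ref{lem:adapt-1} and its proof, that blowing up the offending point strictly decreases a suitable local invariant — the tangency order of the leaf with $D^\str$, or the number of corners of $E'\cup D^\str$ that are simultaneously regular foliation points with non-transverse separatrix. Second, at a corner where one branch of $D^\str$ meets a branch of $E'$ and both are invariant, or at a regular point of $D^\str$ lying on $\sing(\pi^*\cF)$, we need that component of $D^\str$ to be non-di-critical; but $D^\str$ is invariant wherever it meets $\sing(\pi^*\cF)$ because near an isolated singularity an invariant curve through it is forced, so these are already adapted.

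Therefore the procedure is: (i) resolve $\cF$ at $\bp$; (ii) run a finite loop, each step blowing up an isolated point where $\pi^*\cF$ is not adapted to the current total transform of $D$, using the monotone local invariant above together with Theorem \ref{thm:desing-foliat}'s last assertion (further point blowings-up of a resolved foliation keep simple singularities adapted to the new exceptional divisor) to guarantee termination; (iii) observe that at the end $\sigma_1^*\cF$ is adapted to $E_1\cup D^\str$ at every point of the relevant chart. Finally, Lemma \ref{lem:adapt-1} shows that adaptedness to the total transform of $D$ is preserved by any further point blowing-up, so the local corrections made at different points of $S$ do not interfere, and the maps glue into a single locally finite composition $\sigma_1:(S_1,E_1)\to S$ with the required property.

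The main obstacle I expect is step (ii): exhibiting an honest monotone invariant that controls \emph{simultaneously} the foliation's simple-singularity status and its position relative to $D^\str$, so that the correcting blowings-up provably terminate. The subtle case is a di-critical branch of $D^\str$ tangent to the foliation at a smooth foliation point: one must verify (via the explicit chart computations $(x,y)\mapsto(x,xy)$ and $(x,y)\mapsto(xy,y)$ as in the proof of Lemma \ref{lem:adapt-1}) that the contact order drops and that no new non-adapted points of worse type are created, in particular that blowing up does not turn a previously adapted simple singularity into a di-critical configuration — which is exactly the content of the last sentence of Theorem \ref{thm:desing-foliat}. Once that bookkeeping is pinned down, the rest is the routine gluing of locally finite blowing-up sequences that the paper has already been using throughout.
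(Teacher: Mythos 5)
Your overall plan matches the paper's: resolve $\cF$ first (Theorem \ref{thm:desing-foliat}), observe that $NA(\cF',D')$ is isolated, correct the finitely many bad points by further point blowings-up governed by a decreasing local invariant, and invoke Lemma \ref{lem:adapt-1} for stability and gluing. That architecture is sound and is exactly the paper's.

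However, there is a genuine error in your case analysis. You assert that at a regular point of $D^\str$ lying on $\sing(\pi^*\cF)$ the branch of $D^\str$ must be invariant ``because near an isolated singularity an invariant curve through it is forced, so these are already adapted.'' That inference is false: the existence of \emph{some} invariant curve through a simple singularity does not make the a priori given curve $D^\str$ invariant. The paper's proof explicitly handles the configuration in which $\bp'$ is a simple singular point of $\cF'$ with $(E',\bp')$ invariant while every leaf through a nearby point of $D'$ is normal crossing with $D'$; in coordinates the local generator has the shape $[\,Unit\cdot(v')^k + (u')^l + u'v'(\cdots)\,]\rd u' + u' A'\rd v'$ with $k,l\geq 1$, and one needs $l-1$ further point blowings-up (each time dropping the exponent $l$) to reach the adapted form. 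Because you declare this case already adapted, it is not covered by your correction loop; and neither of the two invariants you propose for termination (tangency order of ``the leaf'' with $D^\str$, or the count of certain corners with regular foliation points) makes sense or decreases at a singular point of the foliation. So the termination argument, which you flag as the crux, is in fact incomplete in precisely this sub-case, and the fix is to follow the paper's normal form and track the exponent $l$ there as the decreasing invariant, just as in the regular-point sub-case.
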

\begin{proof}
We resolve the singularities of $\cF$ with $\beta: (S',E') \to S$ a locally finite sequence of 
blowings-up, so that $\cF'$ the pulled-back foliation $\beta^*\cF$ is 
adapted to $E'$. The strict transform $D'$ of $D$ is a (s)nc-divisor which is normal 
crossing with $E'$ (up to possibly finitely many further points blowings-up). The 
intersection  $D'\cap E'$ consists only of isolated points. 

\medskip
Since the locus $NA(\cF',D')$ of non-adapted point is isolated, we can suppose that it is 
reduced to the single point $\{\bp'\}$. Let $(u',v')$ be local coordinates centered at $\bp'$
and adapted to $D'$ so that $\{v'=0\} \subset (D',\bp') \subset \{u'v'=0\}$. 

\smallskip 
1) Let $\bp'$ be a point of $D'\cap E'$, so that it is a regular point of $D'$. 
Suppose that $u'$ is such that $(E',\bp'):= \{u'=0\}$.

\smallskip
a) Suppose that $\bp'$ is a regular point of $\cF'$. 
Thus the leaf through $\bp'$ is tangent to $D'$ (thus normal crossing with $E'$) 
while all the nearby ones are normal crossing with $D'$.
A local generator of $\cF'$ is of the form 
\begin{center}
$Unit\cdot[(u')^l +v'(\cdots)] \rd u' + \rd v'$
\end{center}
for $l$ a positive integer. 

The exceptional curve $C''$ obtained by blowing-up the point $\bp$
is a maximal invariant curve 
of the strict transform $\cF''$ of $\cF'$ and is normal crossing with $D''$, the strict transform
of $D^\prime$.
In the chart $(u'',v'') \to (u'',u''v'')$, we find $D'' = \{v'' = 0\}$ with $C'' = \{u'' = 0\}$, 
so that a local generator at $\bp''= (0,0)$ of $\cF''$ is of the form
\begin{center}
$Unit\cdot[(u'')^{l-1} +v''(\cdots)] \rd u'' + \rd v''$.
\end{center}
The strict transform $D''$ does not meet with the domain of the other 
blowing-up chart. We are in the same situation with $\bp'',C''\cup E',D''$ as we were with $\bp',E',D'$, 
but with the exception that the exponent $l$ in the local generator of $\cF''$ at $\bp''$ has dropped by $1$.
With further $l-1$ point blowings-up (each center being the intersection of the latest strict transform of $D'$
with the latest created exceptional curve) we see in the (only interesting) chart $(\bu,\bv) \to (\bu,\bu^{l-1}\bv) 
= (u'',v'')$), that a local generator of the pull-back of $\cF''$ at the terminal point 
of this process $\bp^{l} = (0,0)$ is of the 
form $\rd \bu + \omega$, where $\omega \in \bm_{\bp^l}\cO_{\bp^l}$.
Thus there exists $\beta_1: (S_1',E_1') \to (S',E')$ factorizing through the blowing-up of the point $\bp'$
so that $E_1' = E' \cup \beta_1^{-1} (\bp')$ so that there exists a neighbourhood $\cU_1'$ of the 
exceptional divisor $\beta_1^{-1} (\bp')$ such that the restricted pulled-back foliation $(\beta_1^*\cF')|_{\cU_1'}$ 
is adapted to $(D^\str\cup E_1')\cap \cU_1'$ (note that our notational conventions 
imply that $D^\str = (D')^\str = (D'')^\str $ as well). 

\smallskip
b) Suppose now that $\bp'$ is a singular point of $\cF'$.
The curve $(E',\bp')$ is invariant while, any leaf of $\cF'$ through any 
point $\bq\in D'$ nearby $\bp'$ is normal crossing with $D'$. 
Under the current hypotheses, a local generator $\te$ of $\cF'$ is of the form 
\begin{center}
$[Unit \cdot (v')^k + (u')^l + u'v' (\cdots)]\rd u' + u'A'\rd v'$ with $k,l\in \N_{\geq 1}$ and $A'\in \cO_{\bp'}$.
\end{center}
We blow-up the point $\bp'$. In the chart $(u'',v'') \to (u''v'',v'')$, the strict transform of $D'$ is empty.
In the chart $(u'',v'') \to (u'',u'' v'')$, the strict transform of $D'$ is $D'':=\{v'' =0\}$ and 
the exceptional divisor is just $C'':=\{u'' =0\}$. Let $\bp''= (0,0)$. If $\cF''$ is the pull-back
of $\cF'$, then a local generator of $\cF''$ is of the form
\begin{center}
$[Unit\cdot(v'')^k + (u'')^{l-1} + u''v''(\cdots)]\rd u'' + u'' A'' \rd v''$ where $A'' \in \cO_{\bp''}$. 
\end{center}
we conclude as in case 1) with further $l-1$ points blowings-up. 

\smallskip 
2) Suppose that $\bp'$ lies in $D' \setminus E'$. Thus it is a regular point of $\cF'$. 

A simple computation, distinguishing between a regular point or a corner point of $D'$, 
shows that after blowing-up the point $\bp'$ we are in the situation 1).
\end{proof}

\medskip
We can now turn to pairs of foliations.
Suppose given two (possibly singular) foliations $\cF_1$ and $\cF_2$ on the regular 
surface $S$. 
Let $\cD_i$ be the invertible sub-module of $\Omega_S^1$ corresponding to $\cF_i$, for 
$i=1,2$. Suppose that these two foliations are transverse to each other on an analytic Zariski 
dense open set of $S$. Let $\cD_{1,2}$ be the invertible $\cO_S$-module of $\Omega_S^2$ generated by 
$\cD_1\wedge \cD_2$. Let $\cC_{1,2}$ be the ideal of its coefficients, so that 
$\cD_{1,2} = \cC_{1,2}\Omega_S^2$.  Let $\Sgm(\cF_1,\cF_2)$ be its co-support, 
the tangency locus of $\cF_1$ and $\cF_2$. We get the following expected
\begin{proposition}\label{prop:pairs-good-position}
Let $C$ be a connected real analytic curve in $S$ which is neither contained in $\Sgm(\cF_1,\cF_2)$
nor contains any component of $\Sgm(\cF_1,\cF_2)$. 
There exists $\tau: (S',E') \to (S,E)$ a locally finite sequence of point blowings-up such that 

\smallskip
i) The strict transform $C^\str$ of $C$ is a snc-divisor which is normal crossing with $E'$.

ii) The ideal $\tau^*\cC_{1,2}$ is principal and monomial in its co-support, namely the snc-divisor 
$\tau^{-1}(\Sgm (\cF_1,\cF_2))$, and is normal crossing with $E'\cup\cC^\str$.

\smallskip
iii) Let $\cF_i'$ be the pull-backed foliation of $\cF_i$, for $i=1,2$. 
The singularities of $\cF_i'$ are adapted to the snc-divisor $D:=C^\str \cup E'\cup \tau^{-1}(\Sgm (\cF_1,\cF_2))$, 
and each component of $D$ is either invariant or di-critical.
We can further demand that each foliation $\cF_1',\cF_2'$ is adapted to the snc-divisor 
$D\cup E'$.

iv) Let $\cD_i'$ be the $\cO_{S'}$-sub-module of $\Omega_{S'}^1$ associated with $\cF_i'$. Thus 
$\cD_1'\wedge \cD_2' = \cJ_{1,2}' \Omega_{S'}^2$ with $\cJ_{1,2}'$ a principal ideal and monomial  
in the snc-divisor $\cosupp(\cJ_{1,2}')$ (contained in the snc-divisor $\subset (\tau^{-1}(\Sgm (\cF_1,\cF_2)))^\str$), 
and contains the ideal $\tau^*\cC_{1,2}$.
\end{proposition}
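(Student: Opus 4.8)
The plan is to build $\tau$ as a finite concatenation of the elementary resolution operations already available, applied in an order chosen so that each later operation does not destroy the good features produced by the earlier ones. I would proceed in four stages, one for each conclusion.

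\emph{Stage 1 (item i).} First I would apply Theorem \ref{thm:desing}, part 2), to the curve $C$ (equipped with a coherent defining ideal), composed with further point blowings-up from Theorem \ref{thm:desing}, part 1), to make $C^\str$ a regular curve that is normal crossing with $E$. This gives a first map $\tau_1\colon (S_1,E_1)\to (S,E)$ with $C^\str$ a nc-divisor normal crossing with $E_1$. The hypothesis that $C$ is neither contained in $\Sgm(\cF_1,\cF_2)$ nor contains a component of it guarantees that, after this step, $C^\str$ still meets $\Sgm(\cF_1,\cF_2)^\str$ only in isolated points, so $C^\str$ behaves as an ``extra'' divisor component transverse to everything relevant.

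\emph{Stage 2 (item ii).} On $S_1$ the ideal $\cC_{1,2}$ pulls back to $\tau_1^*\cC_{1,2}$, which is a nonzero coherent ideal since $\cF_1,\cF_2$ are generically transverse. I would apply Theorem \ref{thm:desing}, part 1), and then the Ordering-Monomials type normalization (Theorem \ref{thm:order-monom}) to principalize and monomialize $\tau_1^*\cC_{1,2}$ in a nc-divisor that is normal crossing with $E_1\cup C^\str$. This produces $\tau_2\colon (S_2,E_2)\to (S_1,E_1)$ with $\tau_2^*\tau_1^*\cC_{1,2}$ principal and monomial in $(\tau_1\tau_2)^{-1}(\Sgm(\cF_1,\cF_2))$, normal crossing with $E_2\cup C^\str$; note that the co-support of the monomialized ideal is exactly that preimage divisor (a nc-divisor), which is item ii).

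\emph{Stage 3 (item iii).} Now I would desingularize each foliation. By Theorem \ref{thm:desing-foliat} applied successively to $\cF_1$ and then $\cF_2$ (the second resolution does not reintroduce non-simple singularities of the first, since by Theorem \ref{thm:desing-foliat} the reduced state is stable under further point blowings-up), and then by Lemma \ref{lem:adapt-2} applied with the nc-divisor $D := C^\str\cup E_2\cup(\tau_1\tau_2)^{-1}(\Sgm(\cF_1,\cF_2))$ for each foliation in turn, I obtain $\tau_3\colon (S_3,E_3)\to (S_2,E_2)$ such that each $\cF_i'$ is adapted to $D^\str\cup E_3$, and each component of that divisor is either invariant or di-critical: the ``invariant or di-critical'' dichotomy at a regular point of the foliation is forced after blowing up using exactly the computation displayed at the end of Section \ref{section:res-foliat} (a component carrying a local generator $\rd u + Unit\cdot\rd v$ is made di-critical/invariant after one extra blow-up), so I would build that extra cleaning into $\tau_3$. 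A subtlety is that monomializing $\cC_{1,2}$ in Stage 2 and then resolving the foliations in Stage 3 are compatible: point blowings-up preserve ``principal and monomial in a nc-divisor'' (by the standard behavior of monomial ideals under blow-up, Remark \ref{rem:noether}), so item ii) survives Stage 3.

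\emph{Stage 4 (item iv).} For the final $S' := S_3$ and $\tau := \tau_1\tau_2\tau_3$, set $\Te_i' := \cD_{\cF_i'}$. Since $\Te_i'$ is obtained from $\tau^*\Te_i$ by dividing out the monomial part (Definition \ref{def:pull-back-foliat}), we have $\tau^*\Te_i = \cN_i\,\Te_i'$ for monomials $\cN_i$ in $E'$; hence $\tau^*(\Te_1\wedge\Te_2) = \cN_1\cN_2\,(\Te_1'\wedge\Te_2')$, so $\Te_1'\wedge\Te_2' = \cJ_{1,2}'\,\Omega_{S'}^2$ with $\cC_{1,2}'\!:=\cJ_{1,2}'$ dividing out fewer monomial factors than $\tau^*\cC_{1,2}$ does, which gives the containment $\tau^*\cC_{1,2}\subset\cJ_{1,2}'$. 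That $\cJ_{1,2}'$ is itself principal and monomial in its co-support follows because, at a point where both foliations are desingularized and adapted, local generators $\mu_1,\mu_2$ of $\Te_1',\Te_2'$ are in one of the normal forms \eqref{eq:form-1}--\eqref{eq:form-2} (or the simpler smooth-point forms), and a direct inspection of $\mu_1\wedge\mu_2$ in each finite list of cases shows the coefficient is a monomial in the divisor times a unit; the co-support then sits inside $(\tau^{-1}(\Sgm(\cF_1,\cF_2)))^\str$ because away from $\Sgm(\cF_1,\cF_2)$ the two foliations were already transverse.

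\emph{Main obstacle.} The delicate point is Stage 3 combined with the bookkeeping across stages: I must be sure that resolving $\cF_2$ after $\cF_1$, and then running Lemma \ref{lem:adapt-2} twice with a divisor that itself keeps growing, terminates and does not undo the simple-singularity property of the first foliation nor the monomiality of $\cC_{1,2}$ — this is where I would lean hardest on the stability clauses of Theorem \ref{thm:desing-foliat} and on the fact that all the operations involved are compositions of point blowings-up, under which ``monomial in a nc-divisor'' and ``adapted'' are preserved (Lemma \ref{lem:adapt-1}). Verifying the ``invariant or di-critical'' alternative for \emph{every} component simultaneously, rather than one at a time, is the part that requires the most care, and I expect it to be handled exactly by the end-of-Section-\ref{section:res-foliat} case analysis applied locally at each corner.
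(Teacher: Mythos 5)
Your four stages reproduce the paper's own proof almost exactly: the paper first resolves the singularities of $C$, then principalizes and monomializes $\sgm_1^*\cC_{1,2}$, then applies the resolution theorem for plane foliations so that each $\cF_i$ only has adapted singularities, and finally does further point blowings-up (Lemma \ref{lem:adapt-1}, Lemma \ref{lem:adapt-2}) to make both foliations adapted to the full divisor; and it then disposes of iv) in one sentence as a by-product of ii). Your Stages 1--3 are this same decomposition (you merely fold the paper's third and fourth compositions into a single Stage 3, which is immaterial). The remark about ordering monomials via Theorem \ref{thm:order-monom} in Stage 2 is unnecessary here (it is only needed later, in Proposition \ref{prop:pairs-order}), but it is harmless.

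The one place where your write-up has a genuine gap is Stage 4. You assert that the coefficient ideal of $\tau^*(\Te_1\wedge\Te_2)$ equals $\tau^*\cC_{1,2}$, and from $\tau^*(\Te_1\wedge\Te_2)=\cN_1\cN_2(\Te_1'\wedge\Te_2')$ you conclude $\cJ_{1,2}'=(\cN_1\cN_2)^{-1}\tau^*\cC_{1,2}\supset\tau^*\cC_{1,2}$. That first equality is false: since $\tau^*$ of a $2$-form carries a Jacobian factor, the coefficient ideal of $\tau^*(\Te_1\wedge\Te_2)$ is $J_\tau\cdot\tau^*\cC_{1,2}$, where $J_\tau$ is the (monomial) Jacobian ideal of $\tau$. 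One actually gets $\cJ_{1,2}'=(\cN_1\cN_2)^{-1}J_\tau\,\tau^*\cC_{1,2}$, and the stated containment $\tau^*\cC_{1,2}\subset\cJ_{1,2}'$ is equivalent to $J_\tau$ dividing $\cN_1\cN_2$ -- a nontrivial divisibility that is not automatic (e.g.\ any blowing-up with center at a point where both $\cF_i$ are regular enlarges $J_\tau$ but not $\cN_1\cN_2$). So your derivation does not establish the claimed inclusion as written. To be fair, the paper's own proof of iv) is a single unargued sentence ("a consequence of the proof of point ii)"), so your error reflects a genuine subtlety in the statement rather than a divergence from the paper's method; but if you want a defensible justification of iv) you must track the Jacobian factor explicitly and check the required divisibility against Noether's Lemma at each center blown up.
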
 
\begin{proof}
First we resolve the singularities of the curve $C$ by means of a locally finite sequence of points blowings-up, 
namely $\sgm_1:(S_1,E_1) \to S$. 

\medskip
Second we principalize and monomialize the ideal $\sgm_1^*\cC_{1,2}$ by means of a locally finite sequence 
of point blowings-up $\beta_2: (S_2,E_2) \to (S_1,E_1)$ so that  $\cosupp (\sgm_2^*\cC_{1,2})$ is a snc-divisor 
which is normal crossing with $E_2\cup C^\str$ and where $\sgm_2:= \sgm_1\circ\beta_2$. 

\medskip 
Third, by means of $\beta_3: (S_3,E_3 = E_2 \cup E_3') \to (S_2,E_2)$, a locally finite sequence of point
blowings-up  where $E_3'$ is the new exceptional divisor (and keeping denoting $E_2$ for the strict transform 
$E_2^\str)$, the pulled-back foliation $\cF_i^3:= \beta_3^*(\sgm_2^*\cF_i)$ only have singularities adapted 
to $E_3'$ where $i=1,2$ (and is regular at each point of $E_2\setminus E_3'$).

\medskip
We do further point blowings-up $\beta_4: (S_4,E_4) \to (S_3,E_3)$ so that the foliations $\beta_4^*\cF_i^3$ 
are adapted to the snc-divisor $E_4 \cup C^\str\cup \Sgm (\cF_1,\cF_2)^\str$, which is possible thanks to Lemma \ref{lem:adapt-1}
and Lemma \ref{lem:adapt-2}.
Then we define $\cF_i':=\beta_4^*\cF_i^3$ and $\tau:=\sgm_2\circ\beta_3\circ\beta_4$.

\medskip
Last to get point iv) we have

$$\tau^*\cD_1 = \cI_1 \cD_1'\, , \; \;\tau^*\cD_2 = \cI_2 \cD_2 \, , \;\;
\tau^*(\cD_1 \wedge\cD_2) = \tau^*(\cC_{1,2} \Omega_S^2) = \tau^*\cC_{1,2} \Omega_{S_4}^2
$$
where $\cI_1$ and $\cI_2$ are principal and monomial ideals in some snc divisors contained in
$E_4 \cup C^\str\cup \Sgm^\str$. From point ii) we deduce point iv) since
$\tau^*\cC_{1,2} = \cI_1\cdot\cI_2\cdot\cI_{1,2}'$.
\end{proof}
The next proposition is of interest for our main result,
but requires a few preparations.  
\begin{definition}[see \cite{Sa}]
Let $E$ be a normal crossing divisor of $S$. 
Let $\ula \in S$ and let $\{h=0\}$ be a local reduced equation of $(E,\ula)$ . 
A meromorphic differential $q$-form $\omega$ is \em logarithmic (along $(E,\ula)$) \em if $h\omega$ and
$h \rd \omega$ are both regular $q$-forms.
\end{definition}  
We denote $\Omg_S^q (\log E)$ the $\cO_S$-module of $q$-logarithmic forms along $E$.

If $\bp$ does not lie in $E$, there exists a neighbourhood $\cU$ 
of $\bp$ such that 
\begin{center}
$\Omg_\cU^1 = \Omg_S^1|_\cU = \Omg_S^1 (\log E)|_\cU$.
\end{center}
If $\bp$ is a regular point of $E$, we can find local coordinates $(u,v)$ at $\bp$ adapted to $E$
for which $(E,\bp) = \{u=0\}$, there exists a neighbourhood $\cU$ 
of $\bp$ such that 
\begin{center}
$\Omg_S^1 (\log E)|_\cU = \cO_\cU \rd v + \cO_\cU \rdlg u$,  where 
$\displaystyle{\rdlg u := \frac{\rd u}{u}}$.
\end{center} 

If $\bp$ is a corner point of $E$, there are local coordinates $(u,v)$ at $\bp$ adapted to $E$
such that $(E,\bp) = \{uv=0\}$, there exists a neighbourhood $\cU$ 
of $\bp$ such that 
\begin{center}
$\Omg_S^1 (\log E)|_\cU = \cO_\cU \rdlg u + \cO_\cU \rdlg v$. 
\end{center}

\smallskip
Since $\Omg_S^1$ is a sub-module of $\Omg_S^1 (\log E)$, any sub-module $\Te$
of $\Omg_S^1$ is a sub-module of  $\Omg_S^1(\log E)$. A local \em logarithmic 
generator of $\Te$ \em is a local generator of $\Te$ as a sub-module of $\Omg_S^1 (\log E)$. 
The \em ideal of logarithmic coefficients of $\Te$ \em is the ideal $\cC_\Te^{\log}$ 
locally generated by the logarithmic generator of $\Te$  
evaluated along local regular vector fields on $S$. Note that if $\cC_\Te$ is the ideal of coefficients of $\Te$, then
$\cC_\Te^{\log} \subset \cC_\Te$.
 
\medskip
Assume Proposition \ref{prop:pairs-good-position} is satisfied.

\smallskip
Let $\bp'$ be a corner point of $E'$. Let $(u,v)$ be local coordinates centered at $\bp'$ 
and adapted to $E'$. For each $i$, the foliation $\cF_i'$ has a local generator 
at $\bp'$ either of the form $\rd u + u (\cdots) \rd v$ (up to permuting $u$ and $v$) or 
$\bp'$ is an adapted singularity of $\cF_i'$. 
Let  $\te_i$ be a local generator of $\cD_i'$. The \em logarithmic generator 
$\te_i^{\log}$ of $\cD_i'$ associated to $\te_i$ \em is defined as follows:
\begin{itemize}
\item if $\te_i = \rd u + u(\cdots)\rd v$ then $\te_i^{\log}: = u^{-1} \te_i = \rdlg u + v (\cdots) \rdlg v$,
\item if $\te_i = v \rd u + u (\cdots)\rd v$ then $\te_i^{\log}: = u^{-1}v^{-1} \te_i = \rdlg u +  (\cdots) \rdlg v$.
\end{itemize}
Note that, in each case, the logarithmic $1$-form $\te_i^{\log}$ is nowhere vanishing.

Let $\cM_i$ be a local monomial (in $E'$) generating the ideal $\tau^*\cC_{\cD_i}$ and let $\cM_i^{\log}$ be a local generator
of $\cC_{\tau^*\cD_i}^{\log}$, the logarithmic coefficient ideal of the total transform $\tau^*\cD_i$. 
According to the two cases to distinguish, we either find that $\cM_i^{\log} = u\cdot\cM_i$ or, respectively,
that $\cM_i^{\log} = uv\cdot\cM_i$.
\begin{proposition}\label{prop:pairs-order}
Continuing Proposition \ref{prop:pairs-good-position}, let $\bp'$ be a corner point 
of the exceptional divisor $E'$.

v) If $\bp'$ is an adapted singular point of $\cF_1'$ and $\cF_2'$ such that the ideals 
$\cC_{\tau^*\cD_1}$ and $\cC_{\tau^*\cD_2}$ are not ordered at $\bp'$, there exists 
a locally finite sequence of point blowings-up $\beta'':(S'',E'' = E'\cup E_{\beta''}) \to (S',E')$ 
such that at each corner point of $E_{\beta''}$ the ideals $\cC_{(\tau\circ\beta'')^*\cD_1}$ and 
$\cC_{(\tau\circ\beta'')^*\cD_2}$  are ordered. Consequently so are the ideals of logarithmic coefficients 
$\cC_{(\tau\circ\beta'')^*\cD_1}^{\log}$ and $\cC_{(\tau\circ\beta'')^*\cD_2}^{\log}$.

vi) If $\bp'$ is a regular point of $\cF_1'$ such that the ideals 
$\cC_{\tau^*\cD_1}$ and $\cC_{\tau^*\cD}$ are not ordered at $\bp'$, there exists 
$\beta'':(S'',E'' = E'\cup E_{\beta''}) \to (S',E')$, a locally finite sequence of point blowings-up, 
such that at each corner point of $E_{\beta''}$ the ideals $\cC_{(\tau\circ\beta'')^*\cD_1}$ and 
$\cC_{(\tau\circ\beta'')^*\cD_2}$  are ordered. Thus the ideals $\cC_{(\tau\circ\beta'')^*\cD_1}^{\log}$ 
and $\cC_{(\tau\circ\beta'')^*\cD_2}^{\log}$ are also ordered.
\end{proposition}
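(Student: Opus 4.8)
The statement is local around the corner point $\bp'$: it suffices to produce, over a neighbourhood of $\bp'$, a finite composition of point blowings-up with the stated effect, and then to perform this over the (isolated, locally finite) set of corners of $E'$ at which the two coefficient ideals fail to be ordered. So I fix coordinates $(u,v)$ centred at $\bp'$ and adapted to $E'$, with $(E',\bp')=\{uv=0\}$. By Noether's Lemma (Remark \ref{rem:noether}) I may write, at $\bp'$, $\cC_{\tau^*\Te_i}=J_i\cdot\cC_{\cF_i'}$, where $J_i$ is a principal ideal monomial in $E'$, generated by $\cM_i=u^{a_i}v^{b_i}$, and $\cC_{\cF_i'}$ is the coefficient ideal of the foliation $\cF_i'$ at $\bp'$. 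Since (Proposition \ref{prop:pairs-good-position} iii)) $\cF_i'$ has only simple singularities adapted to $E'$, the normal forms at the end of Section \ref{section:res-foliat} show that $\cC_{\cF_i'}$ is one of a very short list of toric ideals at $\bp'$: the unit ideal (if $\cF_i'$ is regular at $\bp'$), the maximal ideal (if $\bp'$ is a simple singularity with non-degenerate linear part), or, up to permuting $u$ and $v$, $(u,v^{m_i})$ with $m_i\ge 2$ (if $\bp'$ is a saddle-node of $\cF_i'$). Through the logarithmic $1$-forms introduced just before the statement, the same normal forms give $\cC_{\tau^*\Te_i}^{\log}=(u^{\varepsilon_i}v^{\delta_i})\,J_i$ — a principal monomial ideal, because $\te_i^{\log}$ is nowhere vanishing — with $\varepsilon_i,\delta_i\in\{0,1\}$ recording which local component of $(E',\bp')$ is invariant for $\cF_i'$. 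In these terms the pair $\cC_{\tau^*\Te_1},\cC_{\tau^*\Te_2}$ (resp.\ their logarithmic versions) is ordered at $\bp'$ precisely when the monomial data $\cM_1,\cM_2$ are comparable (and compatibly with the $\varepsilon_i,\delta_i$); since $a_1=a_2$ or $b_1=b_2$ already makes them comparable, the hypothesis ``not ordered'' means $a_1\ne a_2$ and $b_1\ne b_2$ with opposite inequalities, say $a_1>a_2$, $b_1<b_2$.

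The plan is then to blow up $\bp'$ and iterate, ordering the monomial data in the spirit of Theorem \ref{thm:order-monom}. At each step I blow up a corner point; by Theorem \ref{thm:desing-foliat} the transformed foliations still have only simple singularities adapted to the new exceptional divisor, and by Lemma \ref{lem:adapt-1} they remain adapted to the transform of $E'$, so the description of the previous paragraph is available at every new corner and the construction is stable. A direct computation in the two charts of the blow-up of a corner — of the type already carried out in the proof of Lemma \ref{lem:adapt-1}, together with the elementary blow-up of a saddle-node — gives the transformation of the exponent pairs: in one chart $(a_i,b_i)\mapsto(a_i+b_i+c_i,\,b_i)$ and in the other $(a_i,b_i)\mapsto(a_i,\,a_i+b_i+c_i)$, where $c_i\in\{0,1\}$ is the power of the new exceptional curve absorbed into $J_i$ ($0$ at a regular point of $\cF_i'$, $1$ at a singular one). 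In case v) ($\bp'$ singular for both foliations) and in the symmetric ``regular for both'' case one has $c_1=c_2$; in the mixed case, which covers case vi), the discrepancy of the $c_i$ is offset by the simultaneous change of the $\cC_{\cF_i'}$-factor. Running through the finitely many combinations one checks that at each of the two new corners either the monomial data have become comparable (hence the plain and logarithmic coefficient ideals there are ordered, by the classification), or a strictly positive ``disorder'' invariant attached to the incomparable pair strictly decreases. By induction the process stops after a locally finite composition $\beta''$, and at every corner of $E_{\beta''}$ the coefficient ideals $\cC_{(\tau\circ\beta'')^*\Te_1}$, $\cC_{(\tau\circ\beta'')^*\Te_2}$ — and with them the logarithmic coefficient ideals $\cC_{(\tau\circ\beta'')^*\Te_1}^{\log}$, $\cC_{(\tau\circ\beta'')^*\Te_2}^{\log}$ — are ordered. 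This proves v) and vi) at once, the two statements being the instances ``$\bp'$ singular for both'' and ``$\bp'$ regular for $\cF_1'$'' of the analysis.

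The step I expect to be the main obstacle is precisely this bookkeeping — the transformation law together with the verification that the ``disorder'' invariant really does drop — in the cases where $c_1\ne c_2$ (one foliation regular at $\bp'$, the other singular) and where $\bp'$ is a saddle-node of both foliations with different strong separatrices: there the exceptional factors absorbed into $J_1$ and $J_2$ differ, so ordering the two plain coefficient ideals, ordering the two logarithmic ones, and ordering the two monomial parts are a priori three slightly different demands, and one must check that a single finite sequence of corner blowings-up meets all of them simultaneously — that is, that the $\cC_{\cF_i'}$-factors and the exponents $\varepsilon_i,\delta_i$ drift along with the $J_i$-ordering rather than against it. Once this is confirmed in each combination, the inductive argument closes.
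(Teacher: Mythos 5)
Your proposal follows the same basic route as the paper's proof: work locally at a corner in coordinates adapted to $E'$, factor $\cC_{\tau^*\Te_i}=J_i\cdot\cC_{\cF_i'}$ with $J_i$ monomial and $\cC_{\cF_i'}$ read off from the simple-singularity normal forms of Section~\ref{section:res-foliat}, write down the exponent transformation under a corner blow-up, invoke Theorem~\ref{thm:desing-foliat} and Lemma~\ref{lem:adapt-1} for stability, and argue termination by a decreasing ``disorder''. In case v), where $\bp'$ is an adapted singularity of both foliations, your outline does match what the paper does: there both exceptional factors drop by the same amount ($c_1=c_2=1$), and the paper's termination criterion (either the signs of $r_1-r_2$ and $(r_1+s_1+1)-(r_2+s_2+1)$ agree, or the absolute value of the latter is strictly smaller than that of the former) is exactly the kind of invariant you gesture at.

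The gap is precisely where you flag it, and it is a genuine one. In the mixed subcase of vi) ($\bp'$ regular for $\cF_1'$, singular for $\cF_2'$) you have $c_1\neq c_2$, and you claim that ``the discrepancy of the $c_i$ is offset by the simultaneous change of the $\cC_{\cF_i'}$-factor'' and that ``one checks'' a disorder invariant decreases; but you never define that invariant in this case, and the transformation law $(a_i,b_i)\mapsto(a_i+b_i+c_i,b_i)$ with $c_1\neq c_2$ simply does not by itself give a monotone quantity --- the asymmetric $+1$ can push the discrepancy in the wrong direction. The paper's proof contains a specific extra idea to close this: at such a corner it passes to the logarithmic generators $\te_i^{\log}=a_i\,\rdlg u+b_i\,\rdlg v$ (at least one of $a_i,b_i$ a unit), and observes that under the corner blow-up $\gm$ the pull-back $\gm^*\te_i^{\log}$ is again a logarithmic generator of the transformed foliation with a unit coefficient. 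Consequently the ideal of logarithmic coefficients $\cC^{\log}_{\tau^*\Te_i}$ transforms under $\gm$ exactly like a principal monomial ideal, with no foliation-theoretic correction at all; this converts the mixed case into a pure monomial-ordering problem, settled by Theorem~\ref{thm:order-monom}, and the ordering of the plain coefficient ideals follows because at an adapted point $\cC^{\log}$ and $\cC$ differ by the fixed factor $u^{\varepsilon_i}v^{\delta_i}$ with $\varepsilon_i,\delta_i\in\{0,1\}$. You use the logarithmic ideals only in your static classification, not in the inductive step, so without this observation your induction does not close in exactly the subcase you single out as the main obstacle.
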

\begin{proof}
We recall  that $\cJ_i:= \cC_{\tau^*\cD_i}$, so that 
$\cC_{\tau^*\cD_1}^{\log} = (uv)\cdot\cJ_i$.

\smallskip\noindent
Suppose $\bp'$ is an adapted singularity of 
both foliations. 
Let $(u,v)$ be local coordinates adapted to $E'$ at $\bp'$, so that
$(E',\bp') = \{uv = 0\}$.  Thus $\cJ_i$ is locally generated by $u^{p_i}v^{q_i}$ for non-negative 
integers $p_i,q_i$ and $i=1,2$. Suppose that $p_1<p_2$ and $q_2<q_1$.

Let $\te \in \Omg_{\bp'}^1$ such that $\bp'$ is a singularity adapted to $(E',\bp')$.
Thus we can assume (up to permuting $u$ and $v$) that $\te = v\rd u  - \lambda u\rd v + uv \eta$ 
where $\eta\in \Omg_{\bp'}^1$ and $\lambda \notin \Q_{>0}$. Let $\gm$ be the blowing-up of $\bp'$.
At any (of the two) corner point $\bp''$ of $E'':=E'\cup \gm^{-1}(\bp')$, let $z$ be a 
reduced equation of $\gm^{-1}(\bp')$. We find that $\sgm^* \te = z \cdot \te'$ for $\te'\in \Omg_{\bp''}^1$
and (following Lemma \ref{lem:adapt-1}) such that $\bp''$ is an adapted singularity of $\te'$. 

Let $\omg_i$ be a local generator of $\cF_i$. Let $\bp''$ be a corner point of $E''$.
Let $(t,z)$ be local coordinates at $\bp''$ adapted to $E''$ such that 
$(\gm^{-1}(\bp'),\bp'') = \{z=0\}$.
Thus we deduce that $(\tau\circ\gm)^*\omg_i = t^{r_i} z^{r_i +s_i +1} \omg_i'$ where $(r_i,s_i) = (p_i,q_i)$ 
or $(q_i,p_i)$ and for a local generator $\omg_i'$ of $\gm^*\cF_i'$ which has an adapted singularity at $\bp''$. 
Thus either $(r_1 + s_1 + 1) - (r_2 + s_2 +1)$ and $r_1 - r_2$ have the same sign or 
$|(r_1 + s_1 + 1) - (r_2 + s_2 +1)| < |r_1 - r_2|$.   

With further finitely many point blowings-up, at any corner point $\bq$ of the exceptional divisor
lying over $\bp'$, the coefficients ideals of the total transform of $\cD_1$ and $\cD_2$ are ordered.

\medskip\noindent
Let $\bp'$ be a corner point of $E'$ such that it is an adapted singularity of 
only one of the two foliations, say $\cF_2$. Let $(u,v)$ be local coordinates adapted to $E'$, 
thus $(E',\bp') = \{uv = 0\}$ and such that a local generator $\te_1$ of $\cF_1'$ is of the form  
$\rd u + uC\rd v$, so that $G:= \{u=0\}$ is invariant for $\cF_1'$ and $H:=\{v=0\}$ is di-critical. 
Let just write a local generator $\te_2$ of $\cF_2'$ as $vA \rd u + u B \rd v$ and (at least) one 
of the function germ $A,B$ is a local analytic unit.  

Suppose that for $i=1,2$ the ideal $\cJ_i$ is locally generated by $u^{p_i}v^{q_i}$ for non-negative 
integers $p_i,q_i$ and $i=1,2$ with $(p_1 -p_2)(q_1-q_2) <0$.

Let $\gm$ be the blowing-up with center $\bp'$. Note that for $i=1,2$ the logarithmic generator 
$\te_i^{\log}$ is of the form $a_i \rdlg u + b_i \rdlg v$, and at least one of the function germs 
$a_i,b_i$ is a local analytic unit. 

We observe that for $i=1,2$ that the pull-back $\gm^*\te_i^{\log}$  of $\te_i^{\log}$ is of the form 
$a_i' \rdlg u + b_i' \rdlg v$ and one of the function germs $a_i',b_i'$ is a local analytic 
unit. Thus the ideal of logarithmic coefficients of $(\tau\circ\gm)^*\cD_i$ is the pull-back 
$\gm^*(\cC_{\tau^*\cD_i}^{\log})$ of the ideal of logarithmic coefficients of $\tau^*\cD_i$.
We have thus replaced a problem of foliations into a problem on (principal and monomial) ideals to order,
which can be achieved by finitely many point blowings-up.  

\smallskip\noindent
If $\bp'$ is a corner point of $E'$ such that it is a regular point of both foliations $\cF_1'$ 
and $\cF_2'$, then we reach the conclusion as in the case of a single regular foliation.
\end{proof}
We end-up the section with some pairs of normal forms at points of $\Sgm (\cF_1',\cF_2',)$.

\smallskip\noindent
$\bullet$ Let $\bp$ be a regular point of $\Sgm (\cF_1',\cF_2')$ and let $H$ be the component containing $\bp$.
Let's pick local coordinates $(u,v)$ at $\bp$ such that $(H,\bp) = \{u=0\}$. Let $\te_i$ be a local 
generator of $\cF_i'$, for $i=1,2$. We find that $\te_1 \wedge \te_2 = Unit\cdot u^m \rd u\wedge \rd v$,
for a positive integer $m$. Note that $H$ is either invariant for $\cF_1'$ and $\cF_2'$ or is di-critical 
for $\cF_1'$ and $\cF_2'$.

\smallskip\noindent
{\bf Case 1.} \em Suppose $\te_1 (\bp) \neq 0$ and $\te_2 (\bp) \neq 0$. \em 
Then we check that $\te_2 = Unit\cdot\te_1 + u^m\omega$ with $\omg$ such that $\omg = \rd v$
if $H$ is invariant for $\te_1$and $\omg = \rd u$ if $H$ is di-critical for $\te_1$.

\smallskip\noindent
{\bf Case 2.} \em Suppose $\te_1 (\bp) \neq 0$ and $\te_2 (\bp)= 0$. \em 
Thus $H$ is invariant for $\cF_2'$. We get that $\te_1 = \rd u + uB_1 \rd v$ while 
$\te_2 = (v^k\phi(v) + uA_2) \rd u + u B_2 \rd v$, for function germs $A_2,B_1,B_2$ such that 
$uB_2 - uB_1(v^k\phi(v) + u A_2) = u^m$, so that $\te_2 = (v^k\phi (v) + uA_2) \te_1 + u^m \rd v$.
Note that $m=1$ if and only if $B_2$ is a unit.

\smallskip\noindent
{\bf Case 3.} \em Suppose $\te_1 = u\rd v + (v^{k_1}\phi_1(v) +uB_1)\rd u$ and $\te_2 (\bp)= 0$. \em 
Thus $H$ is invariant for both foliations and necessarily $m\geq 2$. We also write $\te_2 = u(v^{k_2}\phi_2(v) + uB_2) \rd u + u A_2 
\rd v$ so that $(v^{k_2}\phi_2 + uB_2) - A_2(v^{k_1}\phi_1 + uB_1) = u^{m-1}$, thus 
$\te_2 = u^{m-1} \rd v +  A_2\te_1$.

\smallskip 
There would be another case to consider, but the situation in which we will use these normal forms and their 
behavior is, as we will see in Appendix \ref{section:normal-form}, not generic. We will deal with this last situation in due time.

\medskip\noindent
$\bullet$ 
Let $\bp$ be a corner point of $\Sgm (\cF_1',\cF_2',)$.
Let $(u,v)$ be local coordinates centered at $\bp$ such that $(\Sgm(\cF_1',\cF_2'),\bp) = \{uv=0\}$. 
Let $\te_i$ be local generator of $\cF_i'$, for $i=1,2$. Thus we find that $\te_1 \wedge \te_2 = Unit \cdot u^m v^n 
\rd u\wedge \rd v$, for positive integers $m$ and $n$. 

\smallskip\noindent
{\bf Case 4.} \em Suppose $\te_1 (\bp) \neq 0$ and $\te_2 (\bp) \neq 0$. \em 
Up to permuting $u$ and $v$, we can find $\te_1$and $\te_2$ such 
that $\te_1 = \rd u + u (\cdots) \rd v$ and $\te_2 = Unit \cdot \te_1 +  u^mv^n \rd v$.

\smallskip\noindent
{\bf Case 5.} \em Suppose $\te_1 (\bp) \neq 0$ and $\te_2 (\bp) = 0$. \em 
We find $\te_1$ such that $\te_1 = \rd u + u (\cdots) \rd v$, up to permuting $u$ and $v$. 
The point $\bp$ is a singularity of $\cF_2'$ adapted to $\Sgm(\cF_1',\cF_2')$ and 
$\te_2 = w\rd z + z (\cdots) \rd w$ for $(w,z) = (u,v)$ or $(w,z) = (v,u)$. 
We deduce that $\te_2 = Unit\cdot v\te_1 + u^mv^n \rd v$. The 
other case is not possible since $n$ must be positive. 

\smallskip\noindent
{\bf Case 6.} \em Suppose $\te_1 (\bp) = 0$ and $\te_2 (\bp) = 0$. \em 
The point $\bp$ is a singularity of $\cF_1'$ and of $\cF_2'$ adapted to $\Sgm(\cF_1',\cF_2')$.
Up to permuting $u$ and $v$ we find $\te_1$ such that $\te_1 = v\rd u + u (\cdots) \rd v$. 
We know that $\te_2 = w\rd z + z (\cdots)\rd w$ for $(w,z) = (u,v)$ or $(w,z) = (v,u)$. 
We deduce, up to a multiplication by a local unit, we can find $\te_2$ 
such that $\te_2 = Unit\cdot\te_1 + u^mv^{n-1} \rd v$.
%
%
%
%
%
%
%
%
%
%
%
%
%
%
%
%
%
%
%
%
%
%
%
%
%
%
%
%
%
%
%
%
%
%
%
%
%
%
%
%
%
%
%
%
%
%

\section{Main result: Monomialization of $2$-symmetric tensors on regular surfaces}\label{section:main}
We present here the main result of the paper.  
At the very end of the section, we will recall the two situations 
we want to apply the main  result to, starting points of the paper.

\medskip
We recall some well known facts about morphisms between vector-bundles.

\smallskip
Let $\sgm:M\to N$ be a regular  mapping between regular manifolds.
Any fiber-bundle considered below will be a regular fiber-bundle, unless explicitly mentioned 
otherwise.

Let $F$ be a vector bundle of finite rank over $N$. The base change $\sgm: M\to N$ induces
a regular mapping of vector bundle $\sgm_F^h: F^\sgm \to F$, induces $\sgm$ on the $0$-section
and identity in the fibers. 

If $A :F \to F'$ is a regular mapping of vector bundles (both of finite rank) over $N$,
the base-change $\sgm$ induces a regular mapping $A^\sgm: F^\sgm \to (F')^\sgm$ of vector 
bundles over $M$.  

\medskip
Let $F$ be a vector bundle over $N$ and $E$ be a vector bundle over $M$, both of finite 
rank. Let $\Phi:E\to F$ be a \em regular vector bundles mapping along $\sgm$, \em that is such that 
$\pi^F_N\circ \Phi = \sgm \circ \pi^E_M$, where $\pi_\cB^\cE$ denotes the projection 
of the vector bundle $\cE$ onto its basis $\cB$.

The regular mapping $A\circ\Phi :E \to F'$ of vector bundles along $\sgm$ is thus well defined.

There exists also a unique regular mapping $\Phi^\sgm : E\to F^\sgm$ of regular vector bundles over $M$ 
factoring $\Phi$ through $\sgm_F^h$, namely $\sgm_F^h \circ \Phi^\sgm = \Phi$. 

\smallskip
The differential mapping $D\sgm:TM\to TN$ is a regular mapping of vector bundle along $\sgm$. Thus 
it factors as $D\sgm = \sgm_{TN}^h \circ \Delta\sgm$, where 
\begin{center}
$\Delta\sgm:= (D\sgm)^\sgm: TM \to TN^\sgm.$
\end{center}
This allows to pull-back any $\cO_M$-section $\te: M\to (TN^\vee)^\sgm$ as the regular 
$\cO_M$-section $\te\circ\Delta\sgm: M\to TM^\vee$, in other words, a regular $1$-form on $M$.

\medskip
Since we will pull-back differential forms and sub-modules of differential forms in the vector bundle (or module) 
sense as well as we will pull-back these differential forms and sub-modules in the sense of differential topology,
we repeat our choice of notations:

\smallskip\noindent
{\bf Important reminder about notations.} \em Let $\sgm: M\to N$ be a regular mapping.

\smallskip\noindent
 Let $\te$ be a regular differential $1$-form over $N$.
\\
- The notation $\te^\sgm$ will just mean $\te\circ\sgm$ section of $(TN^\vee)^\sgm$. 
\\
- The notation $\sgm^*\te$ will mean the pull-back 
of $\te$ in the usual sense of differential topology, that is 
$$
\sgm^*\te = \te\circ D\sgm \in \Omega_M^1 .
$$
The relation between these notations being: \em
\begin{center}
$\te\circ D\sgm = \sgm^* \te = (\te^\sgm)\circ\Delta\sgm = (\te\circ\sgm)\circ\Delta\sgm$.
\end{center}

\medskip

Let $S$ be a regular surface. 
Let $F$ be a regular vector bundle of rank $2$ over $S$ equipped with a fiber metric $\bg$. 
Let $\cL_F$ be a non-zero invertible $\cO_S$-sub-module of $\Gm_S (S(2,F))$.
Let $\tau:(T,E_T) \to S$ be a finite composition of point blowings-up so that $\cL := \cL_F^{\tau,div}$ satisfies
the conclusions of Proposition \ref{prop:param-reg}.
We denote $Y_\cL$ the snc-divisor $V_\cL \cup D_\cL \cup \Delta_\cL$ (see Section \ref{section:param-quad}). 
Either $\Delta_{\cL} := VD_{\cL}$ if $\cL_F$ is not constant along the fibers, or
$\Delta_{\cL} := V_\Te$ if $\cL_F$ is constant along the fibers (knowing then that $D_{\cL} = \emptyset$ 
once $\cL$ is not generically non-degenerate). 
\\
\indent
Let $\lambda: (T',E_{T'}) \to (T,\Lambda)$ be a locally finite sequence of point blowings-up, where $\Lambda = E_T \cup C$
for $C$ an isolated set of points of $T$. Therefore the exceptional divisor $E_{T'}$ decomposes as the union $E_T^\str \cup E_\lambda$, 
where \em $E_\lambda$ is the exceptional divisor of $\lambda$, \em that is $\lambda^{-1} (C)$.

\medskip
The next statement refers explicitly to Proposition \ref{prop:param-reg} and to notations introduced 
in Section \ref{section:param-quad}.
Any strict transform of a given nc-divisor $\Delta$ will be denoted by $\Delta^\str$, {\bf in the exception}
of exceptional divisors where strict transforms will still be denoted with the same symbol. 
\begin{theorem}\label{thm:main}
Let $S$ be a regular surface. 
Let $B$ be a regular vector bundle of rank $2$ over $S$ equipped with a fiber metric $\bg$. 
Assume that there exists a regular mapping $\cS: TS \to B$ 
of vector bundles over $S$ and a sub-variety $Z$ of $S$ of positive codimension, such that the restriction $\cS: TS|_{S\setminus Z} 
\to B|_{S\setminus Z}$ is a regular isomorphism of vector bundles over $S\setminus  Z$.

\medskip
Let $\cL_B$ be a non-zero invertible $\cO_S$-sub-module of sections of $(S(2,B))$.
Let be a finite composition of point blowings-up $\sgm_R:(R,E_R) \to S$ 
such that $\cL := (\cL_B\circ\cS)_R$ satisfies the conclusions of Proposition \ref{prop:param-reg}.
Then we find

\smallskip
1) There exists $\beta': (S',E') \to (R,E_R)$, a locally finite composition of point blowings-up,
where $E' :=E_{\beta'} \cup E_R$, 
such that the sub-variety $Z' = E' \cup {\sgm'}^{-1}(Z)$ is a snc-divisor normal crossing with 
$Y_\cL^\str$, where $\sgm':= \sgm_R\circ\beta'$.

\medskip
2) There exists $\wtbt: (\wtS,\wtE)\to (S',E')$, a locally finite sequence of point blowings-up, 
with $\wtE:=E_\wtbt\cup E'$,  
and let $\wtsgm$ be the composed mapping $\sgm'\circ\wtbt$,
such that 

\smallskip
i) At each point $\wtbp$ of $\wtS$ there exist a neighbourhood $\wtcU$ of $\wtbp$ in $\wtS$
and $1$-forms $\te_1,\te_2$ of $\Omg^1(\wtcU)$
such that each foliation generated by $\te_i$ has only simple singularities adapted 
to the snc divisor $\wtcU \cap \wtcD$, where $\wtcD$ denotes the snc-divisor $Y_\cL^\str\cup \wtZ$ 
for $\wtZ$ being $\wtE \cup \wtsgm^{-1}(Z)$.

\smallskip
ii) Each point $\wtbp \in \wtS$ admits a neighbourhood $\wtcU$ of $\wtbp$ in $\wtS$ such that, following-up
on the notations of i), denoting $\wtcS = \cS\circ D\wtsgm$ and $\kp$ a generator of $\cL_B$ nearby $\wtsgm(\wtbp)$, 
we find 
\begin{equation}\label{eq:good-param}
\kp\circ\wtcS =\cM_{\cL}[\ve_1\cN_1 (\cM_1 \te_1)\tns(\cM_1 \te_1) +  
\ve_2\cN_2 (\cM_2 \te_2)\tns(\cM_2 \te_2)], 
\end{equation}
where: $\ve_1,\ve_2 \in \{-1,1\}$; The germ $\cM_\cL$ is a monomial generating the ideal $\wtsgm^*\cC_\cL |_\wtcU$; 
For $i=1,2$, the germ $\cM_i$ is a monomial in $(\wtZ \cup \Delta_\cL^\str)\cap\wtcU$ such that 
the invertible sub-module $(\beta'\circ\wtbt)^*(\cQ_R\circ\Delta_{\sgm_R})$ of regular sections of 
$S(2,T\wtS)$ is generated over $\wtcU$ by $\cM_1\cM_2\cdot\te_1\cdot\te_2$, for $\cQ_R$ the invertible 
$\cO_R$-sheaf of regular sections of $S(2,(TS)^{\sgm_R})$ of 
Remark \ref{rmk:QR}.
\end{theorem}
Before getting into its demonstration, we complement Theorem \ref{thm:main} with the additional properties below, 
about the monomials appearing in Equation (\ref{eq:good-param}). They are more technical in nature and similar to those
stated in Proposition \ref{prop:param-reg}:
\begin{corollary}\label{cor:main}
We can further see that
\\

(iii) $\cO_\wtcU(\te_1 \wedge \te_2) = \cJ_{1,2} \cdot \Omega_\wtcU^2$, 
where $\cJ_{1,2}$ is a monomial and principal ideal in the snc-divisor 
$\wt{\Sgm}_{1,2}:=\cosupp (\cJ_{1,2}\cO_{\wtcU})$  (containing $(\wtZ\cup\Delta_\cL^{\str})\cap\wtcU$) 
which is normal crossing with $\wtcD\cap\wtcU$.
\\
(iv) Let $\cM_{1,2}^{\log}$ be a local generator of $\cJ_{1,2}^{\log}$ at $\wtbp$. The 
ideal of logarithmic coefficients of $\cO_\wtcU(\te_1\wedge\te_2)$ as a sub-module of 
$\Omg_{\wtS}^2(\log(\wtcD \cup \wt{\Sgm}_{1,2}))$. For $i=1,2,$ let 
$\cM_i^{\log}$ be a local generator of the logarithmic coefficient ideal $\cC_i^{\log}$
of $\te_i$ as a sub-module of $\Omg_{\wtS}^1(\log(\wtcD \cup \wt{\Sgm}_{1,2}))$.
The monomials $(\cM_\cL\cN_1\cM_1^{\log})$, $(\cM_\cL\cN_2\cM_2^{\log})$,
$\cM_{1,2}^{\log}$ are ordered.

\smallskip\noindent
3) We can track the monomials and their vanishing locus:
\begin{itemize}
\item if $\cL$ is generically non-degenerate, both function germs $\cN_1,\cN_2$ are 
monomials in $(\wtZ \cup D_\cL^\str)\cap\wtcU$ which cannot both vanish 
simultaneously. And one of the monomial $\cN_i\cM_\cL$ is a local 
generator of the ideal $\wtsgm^* I_\cL^D|_\wtcU$.
\item if $\cL$ is everywhere degenerate one of the function germs $\cN_1,\cN_2$ is a local 
monomial in $\wtZ \cup D_\cL^\str$ while the other one is identically zero. 
If $\cN_i$, for $i=1$ or $2$, is not the zero monomial then $\cN_i\cM_\cL$ 
is a local generator of the ideal $\wtsgm^* I_\cL^D|_\wtcU$.
\end{itemize}
\end{corollary}
We now proof both the Theorem and its Corollary.
\begin{proof}
Point 1) is straightforward. 

\medskip
Let $\cQ_R$ be the invertible $\cO_R$-sub-module of regular sections of $S(2,TR)$ of Remark 
\ref{rmk:QR}, and let $\cQ' := \cQ_R\circ \sgm'$ encoding the diagonalizing directions of $\cL^{\sgm'}$.
We define  $\cQ^\bullet := \cQ_R\circ\Delta_{\sgm_R}\circ D\beta' = \cQ' \circ \Delta_{\sgm'}$, 
an $\cO_{S'}$-invertible sheaf of sections of $S(2,TS')$.

\medskip
Let $E_{\beta'}$ be the exceptional divisor produced by $\beta'$. Let $\cup_{i\in I'} \{\bp_i\}$ be 
the image $\beta'(E_{\beta'})$. Since these points are isolated, we can assume for the work to come that 
there is just a single one, this will not change our arguments since there are local in nature.

\smallskip\noindent
{\bf Special case:} \em $B$ is regularly isomorphic to $TS$, in other words $Z = \emptyset$. \em 

\smallskip
Let $\bp_R$ be the point $\beta'(E_{\beta'})$. 
To ease slightly the notations, let $B_R := B^{\sgm_R}$.
By the results of Section \ref{section:param-quad}, there exist an open neighbourhood $\cU_R$ 
of $\bp_R$ and two regular sections of $B_R^\vee|_{\cU_R}$, say $\eta_1,\eta_2$, vanishing 
nowhere in $\cU_R$ with orthogonal kernels and such that $\eta_1\cdot\eta_2$ generates $\cQ_R$ at every
point of $\cU_R$. Moreover, up to shrinking $\cU_R$, we find 
$$
\kp_R|_{\cU_R} :=\kp^{\sgm_R}|_{\cU_R} = a_1\eta_1\otimes\eta_1 + a_2 \eta_2\otimes\eta_2 
$$
for regular functions $a_1,a_2$ over $\cU_R$ monomial in some snc divisors.
Outside this neighbourhood all the results of 2) are trivially true, since $\beta'$ induces a regular isomorphism.
In particular at every point of $\cU_R$ we find that $\cQ^\bullet$ is generated by $\beta'^*\eta_1\cdot\beta'^*\eta_2$.

Let $\cU' := \beta'^{-1}(\cU_R)$, a neighbourhood of $E_{\beta'}$, over which are defined two regular forms
\begin{center}
$\omg_i:=\eta_i\circ(\cS^{\sgm'}\circ\Delta\beta')$ for $i=1,2$. 
\end{center}
In particular we have two singular foliations over $\cU'$, which are orthogonal outside $E_{\beta'}$, and have singularities
along $E_{\beta'}$.

Let $\cD_i$ be the sub-module of $\Omg_{S'}^1|_{\cU'}$ generated by $\omg_i$ for each $i=1,2$. 
Since $\omg_i$ is singular along $E_{\beta'}$ we can factor out a principal $\cO_{\cU'}$-ideal
$\cC_{\cD_i}$ monomial in $E_{\beta'}$ such that 
$$
\cD_i = \cC_{\cD_i}\cdot\cD_i'
$$
where $\cD_i'$ is an invertible $\cO_{\cU'}$-sheaf of regular sections of $\Omg_{\cU'}^1$ with isolated support, if not empty.
More precisely if $\cC_{\cD_i}$ is locally generated by $h_i$, then $\cD_i'$ is generated locally by $\omg_i^r :=(h_i)^{-1}\omg_i$.
We have actually more than that: The whole invertible sheaf $\cQ^\bullet$ decomposes
as $\cI_\bullet\cdot\cQ^{\bullet,r}$ where $\cI_\bullet$ is principal and monomial in $E_{\beta'}$ and $\cQ^{\bullet,r}$ 
has co-support of dimension $2$ or is empty.

\smallskip\noindent
{\bf Observation.} Let $\gm:(S'',E''= E'\cup E_\gm) \to (S',E')$, be the blowing-up of the point $\bp'\in \cU'$ 
and $E_\gm$ be $\gm^{-1}(\bp')$, the newly created exceptional hypersurface. Let $I_{E_\gm}$ be the reduced ideal 
of $E_\gm$. Let $\cU''$ the pre-image $\gm^{-1}(\cU')$.  We observe that $\gm^*\cD_i' = I_{E_\gm}^{-k_i}|_{\cU''} \cD_i''$, 
where $k_i$ is a positive integer and $\cD_i''$ is a sub-module of $\Omg_{\cU''}^1$ which is finite co-dimensional at each point. 

\smallskip
The simple observation above guarantees that there exists a locally finite sequence 
of points blowings-up $\beta'': (S'',E''= E'\cup E_{\beta''}) \to (S',E')$ such that for each $i$
the sub-module $\beta''|_{\cU''}^* \cD_i$ factors as $J_i|_{\cU''}\cdot\cD_i''$,
 where $J_i$ is a principal ideal monomial in $E_{\beta''}$, and $\cD_i''$ is an invertible 
$\cO_{\cU''}$-sub-module of $\Omg_{\cU''}^1$, of finite co-dimension at each point,
where $\cU''$ is the pre-image $\beta''^{-1}(\cU')$.
 
\smallskip
In order to avoid further notations, we can assume that $\sgm'$ is already such that each 
ideal $\cC_{\cD_i}$ is already principal and monomial in $E'$, so that each $\cD_i'$ 
is also already defining a foliation $\cF_i'$ on $\cU'$ for $i=1,2$.

\smallskip
Now, we have to resolve the singularities of $\cF_1'$ and $\cF_2'$ and do further point
blowings-up so that each final pulled-back foliation is in a form as good as it can be with some of
the given snc-divisors we want to take care of. But, up to a locally finite sequence of points blowings-up we can 
already assumed, thanks to the results of Section \ref{section:pairs-foliat}, that the mapping $\sgm'$ achieve this.
So we get point i). To get the whole of point ii) there is just to carefully track everything 
we have at the level of Equation (\ref{eq:param-sum-squares}) of Proposition \ref{prop:param-reg}
for $\kp_R$ and since 
$$
\kp\circ\cS\circ D\wtsgm = (\kp_R\circ\cS\circ(\sgm_R)_B^h\circ\Delta\sgm_R\circ D\zeta), 
$$
where $\zeta$ is defined as $\wtsgm = \zeta\circ\sgm_R$, we check we get what 
is stated (since we have assumed $\cS$ is an isomorphism).

\smallskip
Now we deal with point iii). Let $\Delta'$ be the snc-divisor 
$V_{\cL_\cS}^\str\cup D_{\cL_\cS}^\str \cup \Delta_{\cL_\cS}^\str$. Let 
$\cJ_{1,2}'$ be the coefficients ideal of $\sgm'^*(\Omg_S^2) = \cJ_{1,2}'\cdot \Omega_{S'}^2$.
We also find 
$$
\cD_1\wedge\cD_2 =  \cC_{\cD_1}  \cC_{\cD_2}J_{1,2} \cdot \Omega_{S'}^2
$$
with $\cC_{\cD_1}\cC_{\cD_2}$ principal and monomial in the snc divisor $E_{\beta'}$ while $J_{1,2}$ has 
co-support $\Sgm'$ which does not contain any component of $E_{\beta'}$.
In particular the sub-variety $\Sgm'$ is  
the tangency locus, contained in $\cU'$ of the foliations $\cF_1'$ and $\cF_2'$. We can 
assume, up to further point blowings-up, that $\Lambda'$, the co-support of $(\cJ_{1,2}')$, and $\Sgm'$ 
are snc-divisors which are normal crossings with $\Delta'$ and that $\cJ_{1,2}'$ is also principal 
and monomial in $\Lambda'$. Note that we also have $\Lambda^\prime \subset \Sigma^\prime \cup E_{\beta^\prime}$.
We can assume moreover, up to further point blowings-up, 
that each local component of $\cE':=\Sgm' \cup \Delta'\cup E^\prime$, contained in $\cU'$,
is either invariant or di-critical for both foliation $\cF_i'$, $i=1,2$.

At a regular point of $\cE'$, each monomial in $\cE'$ under scrutiny is of the form $u^l$ for $u$ a local 
coordinate and $l$ a non-negative integer. So they are already ordered. Let $X'$ be the subset of 
corner points of $\cE'$. Thus at each point $\bq'$ of $X'$ and for each $i=1,2$, each local component 
of $\cE'$ is either invariant or di-critical for $\cF_i'$. This fact is important since the proof of 
point v) and point vi) of Proposition \ref{prop:pairs-order} shows that we can always order 
the "logarithmic" monomials $\cM_1^{\log}$ and $\cM_2^{\log}$. Thus,
working with the logarithmic $1$-forms along the snc-divisor $\cE'$, 
these logarithmic monomials can be assumed already ordered
at any corner point of $\cE'$.

Let us repeat the argument here: Let $\mu_i$ be a local logarithmic generator of $\cD_i$
so that the pull-back $\sgm'^*\mu_i = \cM_i^{\log}\te_i^{\log}$, where $\te_i^{\log}$ is a 
local logarithmic generator of $\cF_i'$ and where $\sgm'^*\cD_i$ is seen as a sub-module of 
$\Omg_{\cU'}^1(\log \cE')$. If $\gm$ is the blowing-up in $S'$ of the point $\bq'$ of $E'$, we 
see that at each corner point of $\gm^{-1}(\cE')\cap\gm^{-1}(\bq')$, we find out
that $\gm^*\te_i^{\log}$ is indeed a logarithmic generator of the pulled-back foliation  
$\gm^*\cF_i'$, so that a local generator of the ideal of logarithmic coefficients 
of $(\sgm'\circ\gm)^*\cD_i$ is just the pull back by $\gm$ of a local generator of the 
ideal of logarithmic coefficients of $\sgm'^*\cD_i$. Our problem of comparison of monomials 
is indeed just a problem of comparing monomials, forgetting about the foliations.
 
Thus at such a point $\bq'$ of $E'$, there exists $\pi: (S'',E'') \to (S',E')$, a finite sequence of point
blowings-up, such that the pull-back of the monomials, we were looking for to order at $\bp'$, are 
ordered at each corner point of $\pi^{-1}(\bq') \cap \pi^{-1}(\cE')$.

\medskip\noindent
{\bf General case:} \em Assume that $Z$ is not empty. \em 

\smallskip
In this context $TS$ play the role of $B$ of the special case above and with $\cL = \cL_B\circ\cS$
in the stead of $\cL_B$ of the special case.
What needs to be done in order to get the announced results, is to track down the position of the strict transform 
$Z^\str$ of $Z$ with the data we already have, and by means of further additional points blowings-up, put the 
new corresponding strict transform  in general position 
with the pull back of the data we already have. This can be achieved by the results of 
Section \ref{section:pairs-foliat}.

\medskip
Point 3) requires just to keep track of the whole process of the monomialization of the corresponding 
ideal, initiated in Section \ref{section:param-quad}.
\end{proof}
The \`a-priori artificial context of Theorem \ref{thm:main} 
proceeds from finding a formulation for the two, similar but not identical, 
following situations below. Instead of working only with 
the tangent bundle, we work on any regular vector bundle of rank $2$. 
Nevertheless, due to a cell decomposition of $S$, there
exists always a closed subset $F$ of $S$, complement of the cells of dimension $2$ 
such that $TS|_{S\setminus F}$ is isomorphic to $B|_{S\setminus F}$, since the connected components
$S\setminus F$ are open balls, thus contractible. Whether $F$ can be a sub-variety 
deserves to be discussed.

\smallskip
The first situation is when $S$ is a regular surface and $B =TS$ its tangent bundle. 
Thus $\kp$ can be any $2$-symmetric (regular) tensor (field) on $S$, and may be degenerate 
somewhere (see \cite{Gra1,GrSa} for semi-positive definite examples).

\smallskip
The second situation motivated the hypotheses on $Z$, $B$ and $TS$ in the theorem.
Suppose the regular surface $S$ resolves the singularities of a surface singularity $S_0$ 
embedded in a regular manifold $M_0$, such that it factors through an embedded resolution 
of the singularities of $S_0$ such that the resolution mapping $\sgm:S \to S_0$ is Gauss-regular, with exceptional divisor $E$,
which is possible by Proposition \ref{prop:gauss-regular} (see Appendix \ref{section:appendixA}). 
Taking $B := T^\sgm S_0$ (See Appendix \ref{section:appendixA}), outside the exceptional divisor $E$ 
the mapping $\sgm$ induces an isomorphism between $TS$ and $B$.
We take $\cL$ as generated by the pull-back of $\sgm^* (\cK|_{S_0})$ of any given 
invertible $\cO_{M_0}$-sub-module $\cK$ of $\Gm_{M_0} (S(2,TM_0))$. 
As explained in the introduction, we came across such situations when $\cK$ is generated by a given 
regular metric on $M_0$ \cite{GrGr,GrSa}.
\begin{remark}
The result proved above does not depend on the Riemannian metric $\bg_0$ but only 
on its conformal class, in other word depends only on the invertible $\cO_{M_0}$-sub-module
of $\Gm_{M_0} (S(2,TM_0))$ generated by $\bg_0$. Indeed, the choice of the geometrically 
admissible centers we blow-up (to reach our main result) is not affected at any step, if instead of 
working with $\bg_0$ we were working with a conformal metric, since the only feature of $\bg_0$ we 
really need to keep track at any time is simply the notion of orthogonality.
\end{remark}
%
%
%
%
%
%
%
%
%
%
%
%
%
%
%
%
%
%
%
%
%
%
%
%
%
%
%
%
%
%
%
%
%
%
%
%
%
%
%
%
%
%
%

\bigskip
\appendix
\section{Gauss Regular Resolution and 2-tensors on singular sub-varieties}\label{section:appendixA}
We present here further ingredients related to resolution of singularities of sub-varieties.
The estranged formulation of the main result, Theorem \ref{thm:main}, finds some justifications 
in the complements of this section.
\subsection{Resolution of singularities with Gauss regular mapping}\label{subsection:appendixA1}
The material presented here, although part of the known folklore, introduces useful
notions and notations. We are very grateful to Pierre Milman for telling us about Gauss regular 
desingularization.

\medskip
Let $\bG_k (V)$ be the Grassmann-bundle of $k$-dimensional real vector subspaces of the 
finite dimensional real vector space $V$. Let $[P]$ be the point of $\bG_k (V)$ corresponding 
to the $k$-dimensional vector subspace $P$ of $V$. 

\smallskip  
Let $F$ be a regular vector bundle of positive finite rank $r$ over a regular (connected) manifold $N$ of finite dimension. 
Let $\bG_k (F)$ be the Grassmann bundle  of the $k$-vector subspaces in the fibers of $F$.
Let $\bG(F)$ be $\cup_{k=1}^r \bG_k(F)$ the total Grassmann bundle of $F$. 

\medskip
Let $M_0$ be a connected regular manifold of dimension $n$. 
Let $X_0$ be a singular sub-variety of the regular manifold  $M_0$. Let $Y_0$ be the singular 
locus of $X_0$. 
The Gauss mapping of $X_0$ is defined as 
$$
\begin{array}{rccl}
\nu_{X_0}  \; : \; &   X_0\setminus Y_0 \; & \; \to \; & \bG(TM_0)  \\
&  \ulb_0 \in X_0\setminus Y_0 & \; \to \;  & \nu_{X_0}(\ulb_0) = [T_{\ulb_0} X_0] \in \bG_{\dim (X_0,\ulb_0)}(T_{\ulb_0} M_0).
\end{array}
$$
\begin{definition}\label{def:gauss-regular}
A geometrically admissible resolution of singularities $\pi: (X,E) \to (X_0,Y_0) \hookrightarrow M_0$ of $X_0$
is said \em Gauss regular, \em if the mapping $\nu_{X_0} \circ \pi$ extends over $X$ 
as a regular mapping $X \to \bG(TM_0)$.
\end{definition}
Composing a geometrically admissible Gauss regular resolution of singularities of $X_0$ with any 
geometrically admissible blowing-up with center in the exceptional divisor will yield another Gauss 
regular resolution of singularities of $X_0$.
\begin{proposition}[see \cite{BBGM}]\label{prop:gauss-regular}
There exists a Gauss regular resolution of singularities of $X_0$.
\end{proposition}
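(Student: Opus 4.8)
The plan is to follow P.~Milman's suggestion and reduce Gauss regularity to an ordinary principalization, the bridge being the Plücker embedding of the Gauss map. I may and do assume $X_0$ has pure dimension $k$ (otherwise one runs the same argument over each dimensional stratum of $X_0\setminus Y_0$, since $\bG(TM_0)=\bigsqcup_j\bG_j(TM_0)$ and the Gauss map is locally constant in $j$). First I would fix, by Theorem \ref{thm:desing}, \emph{any} geometrically admissible resolution $\sgm_1\colon(X_1,E_1)\to(X_0,Y_0)\hra M_0$ with $X_1$ regular, $E_1:=\sgm_1^{-1}(Y_0)$ a nc-divisor, and $\sgm_1$ restricting to an isomorphism $X_1\setminus E_1\simeq X_0\setminus Y_0$. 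Composing $X_1\to X_0\hra M_0$, the differential of $\sgm_1$ is a regular morphism of vector bundles $\rd\sgm_1\colon TX_1\to\sgm_1^*TM_0$ over $X_1$, and since $\dim X_1=k$ its top exterior power is a regular section
\[
s_1:=\wedge^k\rd\sgm_1\ \in\ \Gm_{X_1}\bigl((\wedge^k TX_1)^*\tns\sgm_1^*\wedge^k TM_0\bigr).
\]
Off $E_1$ the map $\sgm_1$ is a local isomorphism onto $X_0\setminus Y_0$, so $\rd\sgm_1$ is there fibrewise injective with image $\sgm_1^*(TX_0)$; hence $s_1$ is nowhere vanishing on $X_1\setminus E_1$, and its fibrewise direction $[s_1]\colon X_1\setminus E_1\to\bP(\sgm_1^*\wedge^k TM_0)$ is exactly the Plücker image of $\nu_{X_0}\circ\sgm_1$.

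Next I would principalize. Trivialising the target bundle locally, $s_1$ is a finite tuple of regular function germs, and the $\cO_{X_1}$-ideal they generate does not depend on the trivialisation up to a unit; this is a well-defined coherent ideal $\cJ_1$, the \emph{ideal of coefficients} of $s_1$, with $\cosupp(\cJ_1)\subseteq E_1$ by the previous remark. Apply point 1) of Theorem \ref{thm:desing}: there is a locally finite composition of geometrically admissible blowings-up $\beta_2\colon(X_2,E_2)\to(X_1,E_1)$ such that $\beta_2^*\cJ_1$ is principal and monomial in the nc-divisor $\beta_2^{-1}(\cosupp(\cJ_1))\subseteq E_2$. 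Put $\sgm:=\sgm_1\circ\beta_2$ and $s:=\wedge^k\rd\sgm$. Here is the one point that needs care: $s$ is \emph{not} merely $\beta_2^*s_1$, because $\rd\sgm=(\beta_2^*\rd\sgm_1)\circ\rd\beta_2$, so $s=(\beta_2^*s_1)\circ(\wedge^k\rd\beta_2)$. The extra factor $\wedge^k\rd\beta_2\colon\wedge^k TX_2\to\beta_2^*\wedge^k TX_1$ is a morphism of \emph{line} bundles, i.e. multiplication by a function whose vanishing ideal — the Jacobian ideal of $\beta_2$ — is monomial in $E_2$, since $\beta_2$ is a tower of blowings-up with regular centers normal crossing to the running divisor \cite{BM2}. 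As the ideal of coefficients of $\beta_2^*s_1$ is $\beta_2^*\cJ_1$, also monomial in $E_2$, the ideal of coefficients $\cJ$ of $s$ is a product of two ideals monomial in $E_2$, hence itself principal and monomial in $E_2$. This is precisely why a \emph{single} principalization suffices and one does not loop forever chasing Jacobian factors.

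With that in hand, locally write $s=\cM\cdot\tilde s$, where $\cM$ is a local monomial in $E_2$ generating $\cJ$ and $\tilde s$ is a nowhere-vanishing local section of $(\wedge^k TX_2)^*\tns\sgm^*\wedge^k TM_0$; the fibrewise direction of $\tilde s$ is independent of these choices (changing $\cM$ by a unit changes $\tilde s$ by a unit), so the local maps $[\tilde s]$ glue to a \emph{regular} map $g\colon X_2\to\bP(\sgm^*\wedge^k TM_0)$ — the line-bundle twist disappears on projectivising — which on the dense open set $X_2\setminus E_2$ coincides with $[s_1]$, i.e. with the Plücker image of $\nu_{X_0}\circ\sgm$. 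Now the Plücker embedding realises $\bG_k(\sgm^*TM_0)=\sgm^*\bG_k(TM_0)$ as a closed sub-bundle of $\bP(\sgm^*\wedge^k TM_0)$, cut out fibrewise by the (analytic) Plücker relations; therefore $g^{-1}\bigl(\bG_k(\sgm^*TM_0)\bigr)$ is a closed analytic subset of $X_2$ containing $X_2\setminus E_2$, hence equals $X_2$. Composing $g$ with the bundle projection $\sgm^*\bG_k(TM_0)\to\bG_k(TM_0)\subseteq\bG(TM_0)$ yields a regular map $\wt\nu\colon X_2\to\bG(TM_0)$ extending $\nu_{X_0}\circ\sgm$; since $\sgm$ is a composition of geometrically admissible blowings-up, it is a geometrically admissible resolution of $X_0$, and it is now Gauss regular.

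The routine ingredients — coherence of the ideal of coefficients of a bundle section, monomiality in the exceptional divisor of the Jacobian of a tower of admissible blowings-up, and closedness of the Plücker image of a Grassmann bundle — I would simply quote. The genuine subtlety, and the step I would flag as the heart of the matter, is the bookkeeping in the middle paragraph: one principalizes the ideal of coefficients of $s_1$ \emph{on $X_1$}, and must then observe that pulling back along $\beta_2$ contributes only the additional Jacobian monomial, so that $\wedge^k\rd\sgm$ itself — and not merely a pullback of $s_1$ — has monomial content on $X_2$, with no further blowing-up required.
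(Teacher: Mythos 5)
Your proof is correct and follows essentially the same route as the paper: form the top wedge of $\rd\sgm_1$, take its coefficient ideal (the paper's $F_0(\sgm_1)$), principalize it, and pass through the Plücker embedding to obtain the regular extension of the Gauss map. The one place you are more explicit than the paper — spelling out that $\wedge^k\rd(\sgm_1\circ\beta_2)$ differs from $\beta_2^*\wedge^k\rd\sgm_1$ only by the Jacobian of $\beta_2$, itself monomial in $E_2$ — is exactly the observation the paper compresses into the remark that $F_0(\sgm_1\circ\beta_C)=(I_{E_C})^\alpha\cdot\beta_C^*F_0(\sgm_1)$.
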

\begin{proof}
For simplicity we suppose that $X_0$ is of pure dimension $d$.

\smallskip
Let $\tau_1:(M_1,X_1,E_{M_1}) \to (M_0,X_0,Y_0)$ be a geometrically admissible embedded resolution 
of singularities of $X_1$. Let $\sgm_1$ be the restriction mapping $\tau_1|_{X_1}$ and let 
$E_1$ be the intersection $X_1 \cap E_{M_1}$ which is a snc-divisor of the resolved manifold $X_1$.

\smallskip
Let $F_0(\sgm_1)$ be the $\cO_{X_1}$-ideal sheaf locally generated by the 
maximal minors of the differential mapping $D\sgm_1$ whose
co-support is the critical locus of $\sgm_1$, contained in $E_1$.
Given any geometrically admissible blowing-up $\beta_C$ with center $C$ contained in $X_1$, 
there exists a non-negative integer $\alpha$ (depending on $C$)
such that the ideal $F_0(\sgm_1\circ\beta_C)$ factors as $(I_{E_C})^{\alpha} \cdot F_0(\sgm_1)^{\beta_C}$, 
where $E_C$ is the newly created exceptional hypersurface $\beta_C^{-1}(C)$ and $I_{E_C}$ is 
its reduced (and principal) ideal. Up to further geometrically admissible 
blowings-up (with centers in $E_1$), we can assume that $F_0(\sgm_1)$ is already
principal and monomial in $E_1$.

For any point $\ula_1$ of $X_1\setminus E_1$, we know that $D\sgm_1 (\ula_1)\cdot T_{\ula_1}X_1 = 
T_{\sgm_1 (\ula_1)} X_0$. 
Let $\ula_1$ be any point of $E_1$ and let $(u,v)$ be local coordinates adapted to $E_1$.
Let $(u',v')$ be another system of local coordinates adapted to $E_1$. 
Thus, in a neighbourhood $\cU_1$ of $a_1$ in $X_1$,
\begin{center}
$D\sgm_1 \cdot \dd_{u_1} \wedge \cdots \wedge D\sgm_1 \cdot\dd_{u_s} \wedge 
D\sgm_1 \cdot \dd_{v_1} \wedge \cdots \wedge D\sgm_1 \cdot \dd_{v_t} = \hfill$ 

$\hfill Unit \cdot
D\sgm_1 \cdot \dd_{u_1'} \wedge \cdots \wedge D\sgm_1 \cdot \dd_{u_s'} \wedge 
D\sgm_1 \cdot \dd_{v_1'} \wedge \cdots \wedge D\sgm_1 \cdot \dd_{v_t'}$
\end{center} 
where $s+t =d$ is the dimension of $X_0$.

\smallskip
Since the ideal $F_0(\sgm_1)$ is principal and monomial in $E_1$, there exists 
a nowhere vanishing regular mapping $\gm_1:\cU_1 \to \wedge^d TM_0$ such that 
\begin{center}
$D\sgm_1 \cdot \dd_{u_1} \wedge \cdots \wedge D\sgm_1 \cdot \dd_{u_s} \wedge 
D\sgm_1 \cdot \dd_{v_1} \wedge \cdots \wedge D\sgm_1 \cdot \dd_{v_t} = \cM_1 \cdot \gm_1$
\end{center}
where $\cM_1$ is a local generator of $F_0(\sgm_1)$.
So we deduce that the mapping 
\begin{center}
$\cU_1 \ni \ula \to [\gm_1 (\ula)] \in \bG_d(TM_0)$
\end{center} 
(using here the Pl\"ucker  embedding of $\bG_d(TM_0)|_{\cU_0}$, where $\cU_0$ is a neighbourhood of $\sgm_1(\ula_1)$ over 
which $TM_0$ is trivial) where $[\gm_1 (\ula)]$ is the vector space direction corresponding to 
the $d$-vector $\gm_1 (\ula)$. This regular mapping coincides with $\nu_{X_0} \circ\sgm_1$ on $\cU_1 
\setminus E_1$.

\medskip
When $X_0$ is not of pure dimension, the resolved manifold $X_1$ is a disjoint union 
of regular manifolds and proceed exactly as above, independently for each dimension.
\end{proof}
%
%
%
%
%
%
%
%
%
%
%
%
%
%
%
%
%
%
%
%
%
%
%
%
%
%
%
%
%
%
%
%
%
%
%
%
%
%
%
%
%
%
%
%
\subsection{$2$-symmetric tensors and quadratic forms on singular sub-varieties}\label{subsection:appendixA2}
Using Subsection \ref{subsection:appendixA1} we make sense 
here of the notion of restriction of $2$-symmetric tensor, respectively quadratic forms, 
on singular sub-varieties. 

\medskip
Let $V$ be a real vector space of finite dimension $r$. 
The \em universal Grassmann bundle $\wtbG_k (V)$ \em is the algebraic sub-variety 
of $\bG_k(V) \times V$ consisting of the pairs $([P],v)$ for any vector $v\in P$ with 
$P$ any $k$-dimensional vector subspace of $V$. It is also an algebraic real vector bundle 
of rank $k$ over $\bG_k(V)$.

\medskip 
Let $M_0$ be a connected analytic manifold of finite dimension. 

\smallskip
Let $Z$ be any non-empty  sub-variety of $M_0$.
The \em Nash "bundle" $\cN_Z$ of $Z$ in $M_0$ \em is the closed semi-analytic subset of $\bG(TM_0)$ 
obtained as the (topological) closure of the graph of the Gauss mapping of $Z$. 
Let $C_4(Z)$ be the closure, taken into $TM_0$, of the tangent bundle $T Z_\reg$ of the regular part 
$Z_\reg$ of $Z$. We call it 
the \em pseudo-tangent "bundle" of $Z$. \em We denote it by $C_4(Z)$, since 
point-wise, the fiber $C_4(Z,\ula)$ over a point $\ula$ of $X_0$ is the fourth Whitney tangent cone 
\cite{Wh1} and consists of the union $\cup_{[P]\in \cN_\ula Z} P$ of all 
the limits at $\ula$ of the tangent spaces to $Z$ at regular points of $Z$. 
When $Z$ is a sub-manifold $C_4(Z)$ is just the usual tangent bundle $TZ$. 

\medskip
Let $X_0$ be a singular sub-variety of $M_0$ with non empty singular locus $Y_0$.
Since $C_4(X_0)$ is a subset of $TM_0$ we can introduce the following
\begin{definition}\label{def:restriction}
Let $\kappa$ be a regular quadratic form on $M_0$. The \em restriction of $\kp$ to $X_0$,
\em denoted $\kp|_{X_0}$, is defined as the restriction $\kp|_{C_4(X_0)}$ of $\kp$ to the 
pseudo-tangent "bundle" of $X_0$. 

The restriction of the $2$-symmetric tensor $\cK$ on $M_0$ to $X_0$ is just defined via the polar form of 
the restriction of $\cK_\Dlt$ to $X_0$. 
\end{definition}

\medskip
Let $\tau: (T_0,D_0) \to X_0$ be any Gauss regular admissible resolution of singularities of $X_0$. 
Let $\nu_0$ be the regularized Gauss mapping of $X_0$, that is the regular mapping $T_0 \to \bG(TM_0)$ 
extending to the whole of $T_0$ the parameterized Gauss mapping $\nu_{X_0}\circ \tau: T_0\setminus D_0 
\to X_0\setminus Y_0$. We see that 
$$
\cup_{\ula\in X_0} \cup_{\ulb\in\tau^{-1} (\ula)}(\ula,\nu_0 (\ulb)) = \cN_{X_0}.
$$
For any point $\ulb$ in $T_0$, let $T_\ulb^\tau X_0$ be the vector subspace of $T_{\tau(\ulb)} M_0$ 
whose direction is the value at $\ulb$ of the regular extension $\nu_0$, namely 
$\nu_0(\ulb) = [T_\ulb^\tau X_0]\in \bG (T_{\tau(\ulb)} M_0)$. 
We call the vector sub-space $T_\ulb^\tau X_0$ \em the tangent space of $X_0$ at $\ulb$ along $\tau$. \em
We deduce that 
$$
C_4(X_0) = \cup_{\ula\in X_0} \cup_{\ulb\in\tau^{-1} (\ula)} \ula\times T_\ulb^\tau X_0
$$
and that for each point $\ulb$ of $T_0$, the differential mapping $(D\tau)(\ulb) :T_\ulb T_0 \to T_{\tau (\ulb)} M_0$ 
takes its values in $T_\ulb^\tau X_0$. 

Let $\wtbG_k (TM_0)$ be the  universal bundle associated with $\bG_k (TM_0)$ and 
let 
\begin{center}
$\wtbG (TM_0):= \cup_{k=1}^r \wtbG_k (TM_0)$,
\end{center}
be the corresponding universal bundle. 
Let $\wtpi: \wtbG (F) \to F$, defined as $(\ula,[P],v) \to (\ula,v)$.

Taking the graph of $\nu_0$, embedding it in the fibered product $T_0\times_{M_0} \bG (TM_0)$,
then lifting it in the fibered product $T_0\times_{M_0} \wtbG (TM_0)$ and eventually 
projecting this lift in the fibered product $T_0\times_{M_0} TM_0$
via the mapping $\wtpi$ shows that the union 
$$
T^\tau X_0:=\cup_{\ulb\in T_0} T_\ulb^\tau X_0,
$$
called \em the tangent bundle of $X_0$ along $\tau$, \em 
is a regular vector bundle over the resolved manifold $T_0$. 
Outside the critical locus of $\tau$ the restricted vector bundle $(T^\tau X_0)|_{T_0\setminus D_0}$
is just the pull-back $T (X_0\setminus Y_0)^{\tau|_{T_0\setminus D_0}}$.

\medskip
Thanks to Definition \ref{def:restriction}, the restriction of 
any submodule of $\Gm_{M_0} (S(2,TM_0))$ to $X_0$ is well defined.

Suppose given $\pi_1: (X_1,E_1) \to (X_0,Y_0)$, a Gauss regular 
resolution of singularities of $X_0$ and let $\cL_0$ be an invertible sub-module of $\Gm_{M_0} (S(2,TM_0))$. Thus 
the regular "section" $\cL_0^{\pi_1}|_{T^{\pi_1}X_0}$ 
of $S(2,T^{\pi_1} X_0)$ coincides with $(\cL_0|_{X_0})^{\pi_1}$. 
Namely for $\kp$ a local generator of $\cL$ nearby $\ula_0$ in $X_0$ and for any $\ula_1$ in $\pi_1^{-1}(\ula_0)$, 
we find
$$
(\kp^{\pi_1}|_{T^{\pi_1} X_0} )(\ula_1) = \kp(\pi_1(\ula_1))|_{T_{\ula_1}^{\pi_1}X_0}.
$$

\smallskip
Let $\cL_1:=\cL_0^{\pi_1}|_{T^{\pi_1}X_0}$ and let $\cC_{\cL_1}$ be the $\cO_{X_1}$-ideal
of coefficients of $\cL_1$ obtained by
evaluating the "$2$-symmetric tensor" $\cL_1$ along the regular section germs of 
$S(2,T^{\pi_1} X_0)$.
With further locally finite geometrically admissible blowings-up we can assume that 
$\cC_{\cL_1}$ is principal and monomial in $E_1$.
Thus any local generator of the invertible $\cO_{X_1}$-submodule $\cC_{\cL_1}^{-1}\cL_1$
does not vanish anywhere. 
%
%
%
%
%
%
%
%
%
%
%
%
%
%
%
%
%
%
%
%
%
%
%
%
%
%
%
%
%
\section{Local normal forms of differentials and of the inner metric on singular surfaces}\label{section:normal-form}
We complete the paper addressing the primary motivation of this work: describing 
locally, in a resolved manifold, the pull-back of the inner metric, by the resolution mapping, of an 
embedded real surface singularity. As a consequence of the previous sections we get a proof of the Hsiang \& Pati 
property for real surfaces which is a bit different from the existing ones \cite{HP,PaSt,Gri1,BBGM}. 
\subsection{Hsiang \& Pati property}\label{subsection:hsiang-pati}
Let $M$ be a smooth manifold. Two (Riemannian) metrics $\bg$ and $\bh$ on $M$
are \em quasi-isometric \em if there exists a positive 
constant $C$ such that $C^{-1}\bh \leq  \bg \leq C \bh$.
They are \em locally quasi-isometric \em if each point $\ula$ of $M$ admits
an open neighbourhood $\cU$ such that the restricted metrics $\bg|_\cU$ and 
$\bh|_\cU$ are quasi-isometric.  

\smallskip
The next result, of local nature, is the main tool used by Hsiang \& Pati to get their result. 
\begin{lemma}[{\cite[Section III]{HP}}]\label{lem:HPcoord}
Let $(X_0,\bo)$ be a normal complex isolated surface singularity germ embedded in $(\C^N,\bo)$.
There exists a finite composition of points blowings-up $\sgm: (X,E) \to (X_0,\bo)$
such that:

i) $X$ is a complex manifold of dimension two and $E:=\sgm^{-1}(\bo)$, the exceptional divisor
of this desingularization of $(X_0,\bo)$, is a snc-divisor.

ii) Any regular point $\ula$ of $E$ admits local regular coordinates $(u,v)$, centered
at $\ula$, such that in this chart $(E,\ula) = \{u=0\}$ and 
the resolution mapping writes locally 
\begin{multline}\label{eq:HP-smooth}
(u,v)  \to (x,y;z) = \sgm(\ula) + (u^{r+1},u^{r+1}f(u) + u^{r+s+1}v; u^{r+1}g(u) + u^{r+s+1}Z(u,v)) \\
\in \C\times\C\times\C^{N-2}
\end{multline}
for non-negative integers $r,s$, for germs $f\in \C\{u\}$, $g\in \C\{u\}^{N-2}$, and $Z$ a regular map germ $(X,\ula) \to \C^{N-2}$.

iii) Any corner point $\ula$ of $E$ admits local regular coordinates $(u,v)$, centered
at $\ula$, such that in this chart $(E,\ula) = \{uv=0\}$ and 
the resolution mapping writes locally 
\begin{multline}\label{eq:HP-corner}
(u,v)  \to (x,y;z) = \sgm (\ula)+(u^mv^n, u^mv^nf(u,v) + u^pv^q; u^mv^ng(u,v) + u^pv^q Z(u,v))  \\ 
\in \C\times\C\times\C^{N-2}
\end{multline}
for non-negative integers $p \geq m$ and $q\geq n$ such that $np -qm \neq 0$ and germs $f \in \cO_\ula$,
$,g \in \cO_\ula^{N-2}$ and
$Z\in \cO_\ula^{N-2}$ such that 
$\rd f \wedge \rd(u^mv^n) = \rd g \wedge \rd(u^mv^n) \equiv 0$.
\end{lemma}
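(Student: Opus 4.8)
The statement is due to Hsiang and Pati \cite{HP}; here I outline the strategy of their proof, recast with the resolution tools recalled in Section~\ref{section:not-voc}. The guiding principle is that on a \emph{normal} surface germ a holomorphic function is recovered from its restriction to the regular part, and a meromorphic function bounded on $(X_0,\bo)\setminus\{\bo\}$ extends holomorphically; this is what forces the coordinate functions, after a well chosen resolution, into the asserted monomial normal form. Normality is essential --- the conclusion fails for non-normal surface germs --- and, together with the \emph{genericity} of the linear coordinates on $\C^n$, it is the only non-formal ingredient: everything else is resolution of singularities and monomialization of ideals.

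First I would fix generic linear coordinates $(x_1,\dots,x_n)$ on $(\C^n,\bo)$ so that $x:=x_1$ is a superficial element of the maximal ideal $\mfrk{m}_{X_0,\bo}$ and the linear projection $p:=(x_1,x_2)=(x,y):(X_0,\bo)\to(\C^2,\bo)$ is finite, of degree equal to the multiplicity of $(X_0,\bo)$, so that each $x_i$ with $i\ge 3$ is integral over $\cO_{\C^2,\bo}$ through $p$. Next I would apply Theorem~\ref{thm:desing} to get $\sgm:(X,E)\to(X_0,\bo)$ with $X$ smooth, $E=\sgm^{-1}(\bo)$ a nc-divisor and $\sgm$ biholomorphic over $X_0\setminus\{\bo\}$, and then, using Theorem~\ref{thm:desing}(1) and Theorem~\ref{thm:order-monom}, perform further blowings-up so that the principal ideals $(\sgm^*x)$ and $(\sgm^*y)$, the Jacobian ideal of $\sgm$, and the pulled-back branch divisor of $p$ become monomial in $E$ with well ordered generators at each point. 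In particular $\sgm^*x$ is, at every point of $E$, a monomial times a unit.

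Then, at a point $\ula\in E$ with coordinates adapted to $E$, I would absorb units into the coordinates so that $\sgm^*x$ is the pure monomial $u^{r+1}$ at a regular point of $E$, or $\cM:=u^mv^n$ at a corner --- legitimate, since a unit whose value at $\ula$ is non-zero has well defined local roots. The ratio $t:=\sgm^*(y/x)$ is then holomorphic near $\ula$ with $\sgm^*y=u^{r+1}t$, and the core of the argument is to show that, after the blowings-up above, $t=f(u)+u^{s}v\cdot Unit$ in the regular case --- whence $\sgm^*y=u^{r+1}f(u)+u^{r+s+1}v$ after renaming $v$ --- and $t=f(u,v)+u^{p-m}v^{q-n}\cdot Unit$ with $\rd f\wedge\rd\cM\equiv 0$ at a corner. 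Here $f$ is the part of $t$ graded by the exceptional monomial; the divisibility and the shape of the remainder reflect whether the normalized component $E_0$ through $\ula$ dominates the projectivized tangent cone of $(X_0,\bo)$ --- on which $y/x$ restricts to a rational function whose generic fibre is transverse, by genericity of $(x,y)$ --- or lies deeper in the blowing-up tree, where $t$ becomes constant along $E_0$; the exponent $s$ (respectively the pair $(p,q)$) records this. The inequalities $p\ge m$, $q\ge n$ follow from $\sgm^*y\in(\sgm^*\mfrk{m})=(\cM)$, while $np-qm\neq 0$ expresses the multiplicative independence of $u^mv^n$ and $u^pv^q$ --- equivalently, that the local degree of $p$ at $\ula$ equals its generic degree --- and it is also what lets the last remaining unit be absorbed while keeping $\sgm^*x$ a pure monomial. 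Finally, since each $x_i$ with $i\ge 3$ is integral over $\cO_{\C^2,\bo}$ through $p$, its pull-back lies in the integral closure of $(\sgm^*x,\sgm^*y)$ and so inherits the subordinate form $\sgm^*x_i=u^{r+1}g_i(u)+u^{r+s+1}Z_i(u,v)$, respectively its corner analogue with $\rd g_i\wedge\rd\cM\equiv0$.

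The main obstacle is exactly this last assertion about $t$ and the $\sgm^*x_i$: singling out the centres to blow up so that the $v$-expansion of $t$ collapses to $f(u)+u^sv\cdot Unit$ (and the corner analogue), and verifying that $f$ depends only on the exceptional monomial and that the remainder monomials meet the independence condition. This is where normality enters decisively --- through the control of meromorphic functions on $(X_0,\bo)$ and the structure of its minimal good resolution --- and where the valuation bookkeeping along the exceptional components, organized by Theorem~\ref{thm:order-monom}, has to be carried through with care. It is also the step that does not follow formally from the monomialization of the pulled-back inner metric in Theorem~\ref{thm:main}: quasi-isometry of that metric to a monomial sum of squares constrains the vanishing orders of the $\sgm^*x_i$ but does not by itself pin the functions themselves into the stated embedded normal form.
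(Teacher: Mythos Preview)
The paper does not give its own proof of this lemma: it is stated with the attribution \cite[Section III]{HP} and used as a quoted result, so there is nothing to compare your argument against at the level of this specific statement. Your outline is a fair summary of the Hsiang--Pati strategy --- generic linear projection, monomialization of $(\sgm^*x)$, $(\sgm^*y)$ and the Jacobian, then reducing $t=\sgm^*(y/x)$ to the required form --- and you correctly flag that the genuine work lies in controlling the $v$-expansion of $t$ and the subordinate coordinates, which is not formal.

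It is worth noting, however, that the paper \emph{does} recover the real-analytic analogue of this lemma (Corollaries~\ref{cor:real-HP-smooth} and~\ref{cor:real-HP-corner}) by a route that is genuinely different from the one you sketch. Rather than working directly with the coordinate functions and their valuations, the paper first proves Theorem~\ref{thm:main} --- the monomialization of the pulled-back metric via the pair of orthogonal foliations $\cF_1,\cF_2$ --- and then, in Section~\ref{subsection:norm-form-diff}, uses Proposition~\ref{prop:differential} (that $(D\wtpi)^*\Omg_{M_0}^1$ is generated by $\cM_1\mu_1$ and $\cM_2\mu_2$) together with the normal-form analysis of the $\mu_i$ (Propositions~\ref{prop:norm-form-smooth}, \ref{prop:diff-smooth}, \ref{prop:diff-corner}) to force the resolution map itself into Hsiang--Pati form. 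Your final sentence asserts that this metric route cannot pin down the embedded normal form; the paper shows that, with the additional foliation data of Theorem~\ref{thm:main} and the careful local analysis of Section~\ref{subsection:norm-form-diff}, it in fact can. The trade-off is that your sketched approach uses normality directly and stays close to the original argument, while the paper's approach avoids any appeal to the complex-analytic structure of the minimal resolution and works uniformly in the real-analytic category, at the cost of the heavier machinery of Sections~\ref{section:res-foliat}--\ref{section:pairs-foliat}.
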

Such local coordinates $(u,v)$, in either cases, are called \em Hsiang \& Pati coordinates. \em
The corollary of such systematic local presentation of the resolution mapping is 
Hsiang \& Pati result of interest to us: 
\begin{theorem}[\cite{HP}]
Let $X_0$ be a normal complex surface singularity germ embedded in $\C\field{P}^N$.
Let $\bg_{X_0}$ be the restriction to the regular part of $X_0$ of the Fubini-Study 
metric on $\C\field{P}^N$.
There exists a finite composition of points blowings-up $\sgm: (X,E) \to X_0$
resolving the singularities of $X_0$ such that 

i) Each point $\ula$ of $E$ admits Hsiang \& Pati coordinates $(u,v)$ like in Lemma \ref{lem:HPcoord}.

ii) When $\ula$ is a regular point of $E$, the (regular extension of the) pulled-back metric 
$\sgm^*\bg_{X_0}$ is quasi-isometric to the metric over $\cU$ given by 
\begin{center}
$\rd u^{r+1} \tns\overline{\rd u^{r+1}} + \rd u^{r+s+1}v \tns \overline{\rd u^{r+s+1}v}$.
\end{center}

iii) When $\ula$ is a corner point of $E$,  the (regular extension of the) pulled-back metric 
$\sgm^*\bg_{X_0}$ is quasi-isometric to the metric over $\cU$ given by 
\begin{center}
$\rd u^mv^n \tns\overline{\rd u^mv^n} + \rd u^pv^q \tns\overline{\rd u^pv^q}$.
\end{center}
\end{theorem}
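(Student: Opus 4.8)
The plan is to reduce the statement to a purely local computation and then read everything off the explicit normal forms of the resolution mapping. First I would take $\sgm\colon(X,E)\to X_0$ to be the resolution produced by Lemma \ref{lem:HPcoord}; since a normal surface singularity is isolated and the resolution is a finite composition of point blowings-up, $E$ is a nc-divisor lying over finitely many points, and off $E$ the map $\sgm$ is a biholomorphism onto the regular part of $X_0$, so there the pull-back of $\bg_{X_0}$ is already a Hermitian metric and there is nothing to prove. Because quasi-isometry is a local property, it suffices to establish ii) and iii) on a small neighbourhood $\cU$ of a fixed point $\ula\in E$, equipped with the Hsiang \& Pati coordinates $(u,v)$ of shape (\ref{eq:HP-smooth}) or (\ref{eq:HP-corner}). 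Near $\sgm(\ula)\in X_0\subset\bP\C^n$ I would fix an affine chart with holomorphic coordinates $(z_1,\dots,z_n)$: on a relatively compact neighbourhood of $\sgm(\ula)$ the Fubini--Study metric is quasi-isometric to the standard Hermitian form $\sum_i\rd z_i\tns\overline{\rd z_i}$, so it is enough to treat its pull-back $\sum_i\rd\sgm_i\tns\overline{\rd\sgm_i}$, where $\sgm_1,\dots,\sgm_n$ are the components of $\sgm$ in that chart, given by (\ref{eq:HP-smooth}) or (\ref{eq:HP-corner}); this pull-back is automatically a regular symmetric form, and it is its (regular) extension that one compares to the two model metrics.

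The heart of the argument is an elementary domination lemma for Hermitian sums of squares: a finite sum $\sum_j(\alpha_j\rd u+\beta_j\rd v)\tns\overline{(\alpha_j\rd u+\beta_j\rd v)}$ is quasi-isometric to the partial sum over two indices $a,b$ as soon as $|\alpha_j|\leq C(|\alpha_a|+|\alpha_b|)$ and $|\beta_j|\leq C(|\beta_a|+|\beta_b|)$ for every $j$, and every mixed minor $\alpha_i\beta_j-\alpha_j\beta_i$ is bounded by $C|\alpha_a\beta_b-\alpha_b\beta_a|$. For the smooth case (\ref{eq:HP-smooth}) I would note that the slot $x=u^{r+1}$ contributes the $1$-form $(r+1)u^r\rd u$; that $u^{r+1}f(u)$ and $u^{r+1}g(u)$, being functions of $u$ alone divisible by $u^{r+1}$, contribute $\rd u$-multiples with coefficients $O(|u|^r)$, hence dominated by $\rd(u^{r+1})$; that the slot $y=u^{r+1}f(u)+u^{r+s+1}v$ has $\rd v$-coefficient equal to $u^{r+s+1}$ while the $z$-slots add only $\rd v$-coefficients of order $\geq r+s+1$; and that, since $s\geq0$ and $v$ is bounded on $\cU$, the $\rd u$-part $(r+s+1)u^{r+s}v\,\rd u$ of $\rd(u^{r+s+1}v)$ is itself dominated by $\rd(u^{r+1})$. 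Feeding this into the domination lemma with $a,b$ the indices of the two distinguished monomials yields ii). The corner case (\ref{eq:HP-corner}) is the same in spirit: $\rd f\wedge\rd(u^mv^n)\equiv\rd g\wedge\rd(u^mv^n)\equiv0$ forces $\rd(u^mv^nf)$ and $\rd(u^mv^ng)$ to be dominated by $\rd(u^mv^n)$; the inequalities $p\geq m$, $q\geq n$ make all remaining $\rd u$- and $\rd v$-coefficients dominated by those of $\rd(u^mv^n)$ or of $\rd(u^pv^q)$; and $np-qm\neq0$ expresses precisely that the two monomial $1$-forms $\rd(u^mv^n)$ and $\rd(u^pv^q)$ are pointwise independent away from the divisor, so that their single mixed minor dominates all the others. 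The lemma then gives iii).

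The step I expect to be the main obstacle is the honest verification of the second hypothesis of the domination lemma — that the Jacobian-type minor of the distinguished monomial pair really dominates every mixed minor created by $f$, $g$ and $Z$. This requires tracking the orders of vanishing of all coefficients along the individual components of $E$, and it is exactly this bookkeeping that the ordering of monomials (Theorem \ref{thm:order-monom}), respectively the orderings already built into the normal forms of Lemma \ref{lem:HPcoord}, is designed to carry; it is also the same mechanism — simultaneous diagonalization of the pulled-back $2$-symmetric tensor as a monomial sum of squares, followed by an ordering of the monomials — that Theorem \ref{thm:main} implements intrinsically, so that Theorem \ref{thm:main} and its corollaries are the genuine (and more robust) substitute for the ad hoc computation sketched here.
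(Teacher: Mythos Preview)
Your proposal is correct and is precisely the classical derivation: the paper does not give its own proof of this theorem, but quotes it from \cite{HP} and presents it explicitly as ``the corollary of such systematic local presentation of the resolution mapping'' (i.e., of Lemma~\ref{lem:HPcoord}). What you have written is exactly that corollary spelled out --- pass to an affine chart where the Fubini--Study metric is comparable to the flat Hermitian form, pull back via the explicit normal forms (\ref{eq:HP-smooth}) and (\ref{eq:HP-corner}), and use Cramer's rule with the distinguished Jacobian minor (nonzero because $np-qm\neq 0$ in the corner case) to express every $\rd\sgm_i$ as a bounded combination of the two model differentials. Your ``domination lemma'' is just this Cramer step rephrased, and the verification you flag as the obstacle (that the $Z$-terms produce only bounded coefficients) goes through directly from $p\geq m$, $q\geq n$ and the constraint $\rd f\wedge\rd(u^mv^n)=\rd g\wedge\rd(u^mv^n)=0$.

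You are also right about the relationship with the rest of the paper: the paper's own contribution is \emph{not} a new proof of this cited theorem, but the intrinsic route via Theorem~\ref{thm:main} leading to Corollaries~\ref{cor:real-HP-smooth}, \ref{cor:real-HP-corner} and Proposition~\ref{prop:quasi-iso-metric}, which recover the Hsiang \& Pati property in the real analytic setting without first exhibiting the explicit coordinate forms of $\sgm$.
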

\subsection{Preliminaries for local normal forms}\label{subsection:prel-form}
$ $ 

\smallskip
Let $M_0$ be a regular connected manifold of dimension $N$, equipped with a regular Riemannian 
metric $\bg_0$. Let $X_0$ be a sub-variety with no connected component of dimension other than $2$. 

\smallskip
Suppose given a Gauss regular resolution $\sgm_1:(X_1,E_1)\to X_0$.

\smallskip\noindent
{\bf Notation.} 
\em Let $\Omg_{\sgm_1}^1$ be the $\cO_{X_1}$-dual to $\Gm_{X_1} (T^{\sgm_1}X_0)$. \em
It is a locally free $\cO_{X_1}$-module of rank $2$

\smallskip
A \em differential $1$-form along $\sgm_1$ \em is a regular section $X_1 \to (T^{\sgm_1} X_0)^\vee$.

\smallskip
Following Proposition \ref{prop:gauss-regular} and then using Proposition \ref{prop:param-reg}, 
we can further assume that for any point $\ula_1$ in $X_1$ there exist a neighbourhood $\cU_1$ of 
$\ula_1$ and local regular sections $\omg_1,\omg_2$ of $\Omg_{\sgm_1}^1|_{\cU_1}$,
with orthogonal kernels (for the fiber-metric $\bg^{\sgm_1}$ restriction of $\bg_0\circ\sgm_1$ 
to $T^{\sgm_1}X_0$), such that over $\cU_1$ the following holds true:
\begin{equation}\label{eq:metric-param-1}
\bg^{\sgm_1} = 
\omg_1\tns\omg_1 + 
\omg_2\tns\omg_2.
\end{equation}
By definition $\bg^{\sgm_1}$ is also the fiber metric onto $T^{\sgm_1} X_0$ which extends 
the restriction of the fiber-metric $(\bg_0|_{X_0\setminus Y_0})\circ\sgm_1$ to $T^{\sgm_1}X_0|_{X_1\setminus E_1} 
= T(X_0\setminus Y_0)^{\sgm_1|_{X_1\setminus Y_1}}$.

\smallskip
Suppose given a resolution of singularities $\wtpi:(\wtX,\wtE) \to X_0$ like in Theorem \ref{thm:main}
factoring through $\sgm_1$, that is $\wtpi = \sgm_1\circ\wtbt$ for $\wtbt:(\wtX,\wtE)\to (X_1,E_1)$ a locally
finite sequence of point blowings-up. Thus Equation (\ref{eq:metric-param-1}) becomes
\begin{equation}\label{eq:metric-param-2}
\bg^{\wtpi}:= \te_1\tns\te_1 + \te_2\tns\te_2,
\end{equation}
and where $\te_i := \omg_i\circ\wtbt$ is a differential $1$-form along $\wtpi$ for $i=1,2$.
By definition $\bg^\wtpi$ extends to the whole of $T^\wtpi X_0$ the restriction of the fiber-metric 
$(\bg_0|_{X_0\setminus Y_0})\circ\wtpi$ to $T^\wtpi X_0|_{\wtX\setminus\wtE}$.

\smallskip
From Theorem \ref{thm:main}, each point $\wta$ of $\wtX$ admits a neighbourhood $\wt{\cU}$ such that for $i=1,2$, 
there exists a $1$-form $\mu_i$ of $\Omg_{\wt{\cU}}^1$, only with singularities adapted to $\wtE$, such that 
\begin{center}
$\wtbt^*(\te_i\circ\Delta\sgm_1) = \cM_i \mu_i$ 
\end{center}
with $\cM_i$ a monomial in $\wtE$, and $\Delta\wtpi:T\wtX \to T^\wtpi X_0 \subset (TM_0)^\wtpi$.
Let $\chi_1,\chi_2$ be local regular sections $\wtX \to T^\wtpi X_0$, which are orthogonal for the 
fiber-metric $\bg^\wtpi$ on $T^\wtpi X_0$, and such that $\theta_i (\chi_j) = \delta_{i,j}$ 
for $i,j\in\{1,2\}$. Suppose that $\wtcU$ is small enough such that we can choose 
regular coordinates $(u,v)$, centered at $\wta$ and adapted to $\wtE$, i.e. $(E,\wta)\subset\{uv=0\}$.   

Let $Q$ be the matrix of the mapping $(\Delta\sgm_1)\circ D\wtbt$ in the basis $(\partial_u,\partial_v)$ and 
$(\chi_1,\chi_2)$. Let $\adj (Q)$ be the adjugate matrix of $Q$ so that $\adj(Q)\cdot Q = Q\cdot \adj(Q) = \psi\cM\cdot Id$,
where $\cM$ is a monomial in $\wtE$ and $\psi$ an analytic unit over $\wt{\cU}$. 
Note that $\psi\cM$ is the (oriented) volume of the image of $\wtpi$ and by Theorem \ref{thm:main}, 
we have $\cM = \cM_1 \cM_2 \cM_{1,2}$. 

We obtain two regular vector fields on $\wtX$, namely $\zeta_i:=\adj(Q) \chi_i$, for $i=1,2$. They may vanish 
only on $\wtE$. We deduce that $\cM_i \mu_i (\zeta_j) = \theta_i (\cM\chi_j) = \psi\cM\delta_{i,j}$ for $i,j=1,2$.
Writing $\mu_i = a_i \rd u + b_i \rd v$ and $\zeta_i = \alpha_i\partial_u + \beta_i\partial_v$
we observe that 
\begin{eqnarray}
a_i\alpha_j + b_i \beta_j & = & \psi\cM_k\cM_{1,2}\delta_{i,j} \; \mbox{\rm with } i\neq k \; \mbox{\rm and } i,j,k\in\{1,2\}\\
a_1b_2 - a_2b_1 & = & \psi\cM_{1,2}.
\end{eqnarray}
Since $\mu_1$ and $\mu_2$ may only vanish at isolated points, 
such that 
\begin{center}
$(\alpha_i,\beta_i) = f_j \cdot (b_j, -a_j)$ with $i\neq j$
\end{center}
In other words, in the basis above, the mapping 
$(\Delta\sgm_1)\circ D\wtbt$ over $\wt{\cU}$ writes as 
\begin{multline*}
(\Delta\sgm_1)\circ D\wtbt = (f_1 \mu_1,f_2 \mu_2): T_\ulb \wtX\ni\xi \to (f_1 \mu_1 (\xi),f_2 \mu_2 (\xi))
\\ 
= (\theta_1,\theta_2) ([(\Delta\sgm_1)\circ D\wtbt] \cdot\xi) \in T_\ulb^\wtpi X_0.
\end{multline*}
Note also that along the way, we have proved the following expected 
\begin{lemma}\label{lem:differential}
The $\cO_\wta$-module $(\Omg_{{\sgm_1},\wta}^1)\circ\Dlt\wtpi$ is generated by $\cM_1\mu_1$ and 
$\cM_2\mu_2$.
\end{lemma}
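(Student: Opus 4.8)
The statement to prove is that the $\cO_{\wta}$-module $(\Dlt\wtpi)^*(\Omg^1_{\sgm_1,\wta})$ is generated by $\cM_1\mu_1$ and $\cM_2\mu_2$. The plan is to read this off directly from the matrix computation that precedes the lemma in the text, which is essentially already done. First I would recall that $\Omg^1_{\sgm_1}$ is, by the Notation just introduced, the $\cO_{X_1}$-dual of $\Gm_{X_1}(T^{\sgm_1}X_0)$, and that the pair $\omg_1,\omg_2$ of differential $1$-forms along $\sgm_1$ appearing in Equation (\ref{eq:metric-param-1}) (equivalently, after pulling back by $\wtbt$, the pair $\te_1,\te_2$ of Equation (\ref{eq:metric-param-2})) is a local $\cO_{X_1}$-basis of $\Omg^1_{\sgm_1}$ near $\sgm_1(\wta)$, since it is dual to the orthogonal local frame $\chi_1,\chi_2$ of $T^{\sgm_1}X_0$ — indeed $\te_i(\chi_j)=\delta_{i,j}$. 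Hence $(\Dlt\wtpi)^*(\Omg^1_{\sgm_1,\wta})$ is the $\cO_\wta$-module generated by the two pulled-back forms $(D\wtbt)^*((\Dlt\sgm_1)^*\te_1)$ and $(D\wtbt)^*((\Dlt\sgm_1)^*\te_2)$; here I am using the factorization $\Dlt\wtpi=(\Dlt\sgm_1)\circ D\wtbt$ (up to the bundle map $\sgm_1^h$, which is an isomorphism on fibers), exactly as recalled at the start of Section \ref{section:normal-form}.

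Second, I would invoke the displayed identity from Theorem \ref{thm:main}(ii), as quoted in the paragraph before the lemma: for each $i=1,2$ one has $(D\wtbt)^*((\Dlt\sgm_1)^*\te_i)=\cM_i\mu_i$, where $\cM_i$ is a monomial in $\wtE$ and $\mu_i$ is a local generator of the foliation $\cF_i$ (only with singularities adapted to $\wtE$). This is precisely the statement that the two generators of $(\Dlt\wtpi)^*(\Omg^1_{\sgm_1,\wta})$ computed in the previous paragraph equal $\cM_1\mu_1$ and $\cM_2\mu_2$, which is the assertion of the lemma. So the content of the proof is really just the bookkeeping that ties the abstract module $\Omg^1_{\sgm_1}$ to the concrete orthogonal frame $\te_1,\te_2$ supplied by the parameterization result, together with the observation that $\wtbt^*$ commutes with the relevant duals and pull-backs.

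The only point that needs a word of care — and what I would flag as the mild obstacle — is making sure that $\{\te_1,\te_2\}$ is genuinely a \emph{generating set} (a basis) of $\Omg^1_{\sgm_1}$ and not merely a pair of sections, so that applying $(\Dlt\wtpi)^*$ to the two basis elements does yield the whole image module. This follows because $\chi_1,\chi_2$ form a local frame of the rank-$2$ locally free module $\Gm_{X_1}(T^{\sgm_1}X_0)$ (orthogonality for $\bg^{\sgm_1}$ makes them linearly independent at every point of $\cU_1$, including over $E_1$), so their dual forms $\te_1,\te_2$ form the dual frame. One then just notes that $(\Dlt\wtpi)^*$ is $\cO$-linear, hence sends a generating set to a generating set of the image, and that $(\Dlt\wtpi)^*\te_i=\cM_i\mu_i$ by Theorem \ref{thm:main}(ii). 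Assembling these three observations gives the lemma with no further computation; in particular the half-page matrix manipulation with $Q$, $\adj(Q)$, $\psi$, $f_i$ is not logically needed for this statement (it is used for the normal-form consequences) — the lemma is the ``expected'' by-product the authors point to.
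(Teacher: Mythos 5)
Your proof is correct, and it takes a somewhat more direct route than the paper does. You go straight from two facts already in hand: (a) the pair $\te_1,\te_2$ is dual to the $\bg^{\wtpi}$-orthonormal frame $\chi_1,\chi_2$ of $T^{\wtpi}X_0$ (positive-definiteness of $\bg^{\wtpi}$ makes $\chi_1,\chi_2$ a genuine local frame, even over $\wtE$), hence a local $\cO_{\wta}$-basis of $\Omega_{\wtpi}^1$; and (b) the displayed identity $(D\wtbt)^*((\Delta\sgm_1)^*\te_i)=\cM_i\mu_i$ quoted from Theorem~\ref{thm:main}. Since $(\Delta\wtpi)^{*}$ is $\cO_{\wta}$-linear and $(\Delta\wtpi)^{*}\te_i$ unwinds to $(D\wtbt)^{*}((\Delta\sgm_1)^{*}\te_i)$ via the factorization $\wtpi=\sgm_1\circ\wtbt$ (the relevant base-change map $\wtbt^h_{\sgm_1^*TM_0}$ being a fiberwise isomorphism — the bundle map you want here is along $\wtbt$, not $\sgm_1$, but this is immaterial), the image module is generated by $\cM_1\mu_1$ and $\cM_2\mu_2$. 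The paper instead reads the lemma off as a by-product of the matrix computation with $Q$, $\adj(Q)$, the vector fields $\zeta_i=\adj(Q)\chi_i$, and the functions $f_j$; what that computation buys, beyond the lemma, is the explicit description of $(\Delta\sgm_1)\circ D\wtbt$ itself as $(f_1\mu_1,f_2\mu_2)$ with $f_j=\psi\cM_j$ for a single analytic unit $\psi$ and with $\adj(Q)\cdot Q=\psi\cM\,\mathrm{Id}$, $\cM=\cM_1\cM_2\cM_{1,2}$. That is, it pins down how the unit ambiguities in the generators of the two foliations are coupled through the volume factor. Your argument does not recover that normalization, but you are right that it is not needed for the statement of the lemma itself — you have correctly identified the computation as producing the lemma along the way rather than being a logical prerequisite for it.
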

%
%
%
%
%
%
%
%
%
%
%
%
%
%
%
%
%
%
%
%
%
%
%
%
%
%
%
%
%
%
%
%
\subsection{Local normal form of differentials}\label{subsection:norm-form-diff}
Let $\wta$ be a point of $\wtE$. Let $(u,v)$ be local coordinates centered at $\wta$ adapted to $\wtE$, so 
that 
$$
\{u=0\} \subset (\wtE,\wta) \subset \{uv=0\} .
$$

\medskip
The $\cO_{\wtX}$-module $(\Omg_{M_0}^1)^\wtpi$ is locally free of rank $n$ and, 
by definition, $\wtpi^*\Omg_{M_0}^1 = (\Omg_{M_0}^1)^\wtpi\circ\Dlt\wtpi$.
We recall that $T^\wtpi X_0$ is a vector sub-bundle of $TM_0^\wtpi|_{\wtX}$. Let $\ula_0 := \wtpi (\wta)$ 
be the image of $\wta$. For a germ of differential form $\te\in \Omg_{M_0,\ula_0}^1$, 
the local section $\te^\wtpi$ nearby $\wta$ of $T^\wtpi X_0^\vee$ is defined as
the restriction $\te\circ\wtpi|_{T^\wtpi X_0}$. Let 
\begin{center}
$\Lambda_\wtpi := (\Omg_{M_0}^1)^\wtpi|_{T^\wtpi X_0}$ 
\end{center}
be the $\cO_{\wtX}$-sub-module of $\Omg_\wtpi^1$ generated by the restrictions to $T^\wtpi X_0$. 

\medskip\noindent
{\bf Claim 1:} $\wtpi^*(\Omg_{M_0}^1) = \Lambda_\wtpi\circ\Dlt\wtpi$.
\begin{proof}[of Claim 1.]
Any germ  at $\wta$ of a vector field $\xi$ along $\wtX$
induces the germ at $\wta$ of the local section $D\wtpi\cdot\xi: (\wtX,\wta) \to T^\wtpi X_0$. 
For $\te\in \Omg_{M_0,\ula_0}^1$, we get that $\wtpi^*\te$ belongs to $\Omg_{\wtX,\ula}^1$ and for every $\wtb$ nearby $\wta$,
the linear form $(\wtpi^*\te)(\wtb)$ is defined as 
\begin{center}
$T_\wtb \wtX \ni \xi \to (\te(\wtpi(\wtb)))( (D\wtpi)(\wtb)\cdot \xi)$, 
\end{center}
while the linear form $(\te^\wtpi\circ\Dlt\wtpi)(\wtb)$ is defined as 
\begin{center}
$T_\wtb \wtX \ni \xi \to (\te(\wtpi(\wtb))|_{(T^\wtpi X_0)_\wtb}) ((D\wtpi)(\wtb)\cdot \xi)$, 
\end{center}
so that they coincide since $(D\wtpi)(\wtb)\cdot\xi$ lies in $(T^{\wtpi} X_0)_\wtb$ which is contained in 
$T_{\wtpi(\wtb)}M_0$.
\end{proof}

\medskip\noindent
{\bf Claim 2:} $\Lambda_\wtpi = \Omg_\wtpi^1$.
\begin{proof}[of Claim 2.]
We just need to show that $\Omg_\wtpi^1\circ\Dlt\wtpi$ is a subset of $\wtpi^*(\Omg_{M_0}^1)$. 
Let $\theta_1,\theta_2$ as in Equation (\ref{eq:metric-param-2}) and let $\chi_1,\chi_2$ 
be the dual basis (for the fiber metric $\bg^\wtpi$). 
Let $\omg_1,\omg_2$ be two $1$-forms of $\Omg_{M_0,\ula_0}^1$ such that $\omg_i (\ula_0) (\chi_j(\wta)) = \delta_{i,j}$.
Thus the sections $\omg_1^\wtpi$ and $\omg_2^\wtpi$ are linearly independent nearby $\wta$, and
the claim is proved.
\end{proof}
Combining Claim 1 and Claim 2 with Lemma \ref{lem:differential} yields the following important
\begin{proposition}\label{prop:differential}
The $\cO_\wtX$-module $\wtpi^*\Omg_{M_0}^1$ is locally generated at $\wta$ by $\cM_1\mu_1$ and $\cM_2\mu_2$.
\end{proposition}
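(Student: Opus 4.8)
The plan is to obtain the statement as a synthesis of the three preceding facts --- Claim~1, Claim~2 and Lemma~\ref{lem:differential} --- the only genuine work being to line up the two competing pull-back conventions of Section~\ref{section:main}. First I would invoke Claim~1, $(D\wtpi)^*\Omg_{M_0}^1 = (\Delta\wtpi)^*\Lambda_\wtpi$, which trades the differential-topology pull-back of the full cotangent module of $M_0$ for $(\Delta\wtpi)^*$ applied to the module $\Lambda_\wtpi$ of restrictions to $T^\wtpi X_0$ of pulled-back $1$-forms. Next I would apply Claim~2, $\Lambda_\wtpi = \Omg_\wtpi^1$, so that the task reduces to computing $(\Delta\wtpi)^*\Omg_\wtpi^1$ at $\wta$.

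For this last computation I would use the local picture of Subsection~\ref{subsection:prel-form}. Since $\wtpi = \sgm_1\circ\wtbt$ and the regularized Gauss map along $\wtpi$ is the $\wtbt$-pull-back of the one along $\sgm_1$, one has $T^\wtpi X_0 = \wtbt^*(T^{\sgm_1}X_0)$, hence near $\wta$ the module $\Omg_\wtpi^1$ is free of rank $2$ with basis $\te_1 = \omg_1\circ\wtbt$, $\te_2 = \omg_2\circ\wtbt$, where $\omg_1,\omg_2$ are the orthogonal local generators of $\Omg_{\sgm_1}^1$ of Equations~(\ref{eq:metric-param-1})--(\ref{eq:metric-param-2}). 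Pushing these two generators through $(\Delta\wtpi)^*$ --- via the chain-rule identity of Section~\ref{section:main} for the composite $\sgm_1\circ\wtbt$ --- is exactly the computation recorded in Lemma~\ref{lem:differential}: $(\Delta\wtpi)^*\te_i = \cM_i\mu_i$, with $\cM_i$ a monomial in $\wtE$ and $\mu_i$ carrying only singularities adapted to $\wtE$ (both supplied by Theorem~\ref{thm:main}). Chaining the equalities yields $(D\wtpi)^*\Omg_{M_0}^1 = (\Delta\wtpi)^*\Omg_\wtpi^1 = \cO_\wta\,\cM_1\mu_1 + \cO_\wta\,\cM_2\mu_2$, which is the assertion.

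I do not expect a real obstacle here: every substantive ingredient --- the orthogonal frame along $\sgm_1$, the monomials $\cM_i$, and the adapted $1$-forms $\mu_i$ --- has already been produced in Theorem~\ref{thm:main} and in the matrix computation of Subsection~\ref{subsection:prel-form}, so this proposition is essentially bookkeeping. The one point worth double-checking is notational hygiene: keeping $\sgm^*$ (module pull-back) distinct from $(D\sgm)^*$ (pull-back of differential forms), as warned in Section~\ref{section:main}, and verifying that $\Omg_\wtpi^1$ agrees locally with $\wtbt^*\Omg_{\sgm_1}^1$ so that Lemma~\ref{lem:differential} transfers verbatim from points of $X_1$ to the point $\wta$ of $\wtX$.
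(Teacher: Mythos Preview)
Your proposal is correct and follows exactly the paper's own argument: the paper introduces the proposition with the sentence ``Combining Claim~1 and Claim~2 with Lemma~\ref{lem:differential} yields the following important'', and what you have written is precisely that combination, spelled out with the identification $\Omg_\wtpi^1 = \wtbt^*\Omg_{\sgm_1}^1$ coming from $T^\wtpi X_0 = \wtbt^*(T^{\sgm_1}X_0)$.
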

Let $(x,y,z_3,\ldots,z_n)$ be local regular coordinates centered at the image point $\ula_0 := \wtpi (\wta)$.
Obviously $\wtpi^*(\Omg_{M_0}^1)$ is $\cO_\wta$-generated by $\wtpi^*\rd x, \wtpi^*\rd y, 
\wtpi^*\rd z_3, \ldots, \wtpi^*\rd z_n$. Since it is of local rank $2$, we can assume that the 
coordinates at $\ula_0$ were such that it is generated 
by $\wtpi^*\rd x, \wtpi^*\rd y$. 
Proposition \ref{prop:differential} implies, up to a linear change in $x$ and $y$,
that nearby $\wta$ the following relations hold:
\begin{eqnarray}
\label{eq:diff-mu1}
Unit \cdot \cM_1 \mu_1 & = & \wtpi^*\rd x + A \wtpi^*\rd y \\
\label{eq:diff-mu2}
Unit \cdot \cM_2 \mu_2 & = & B\wtpi^*\rd x + \wtpi^*\rd y
\end{eqnarray}
for $A,B \in \bm_\wta\cO_\wta$. 

\bigskip\noindent
$\bullet$ Assume that $\wta$ is a regular point of $\wtE$. 

\medskip
We start with the following obvious
\begin{lemma}\label{lem:imp-norm-pair}
Since $\mu_1\wedge\mu_2 = Unit \cdot\cM(\rd u\wedge \rd v)$, if $\mu_i (\wta) =0$ and $\mu_j (\wta) \neq 0$  
when $(i,j) =(1,2)$ or $(i,j) =(2,1)$, then $\mu_j = \rd u + u(\ldots)\rd v$
and $\cM = u^{1+t}$ for some non-negative integer $t$. 
\end{lemma}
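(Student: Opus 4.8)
The plan is to read the whole statement off the identity $\mu_1\wedge\mu_2 = Unit\cdot\cM(\rd u\wedge\rd v)$ using two structural facts furnished by Theorem~\ref{thm:main}: each $\mu_i$ is a local generator of a foliation $\cF_i$ all of whose singularities are adapted to $\wtE$, hence $\mu_i$ has isolated zeros and is in particular a primitive $1$-form; and at a smooth point of $\wtE$ the component $(\wtE,\wta)=\{u=0\}$ is non-di-critical (invariant) for any $\cF_i$ for which $\wta$ is a singular point. I would work throughout in the given coordinates $(u,v)$, writing $\mu_\ell = a_\ell\rd u + b_\ell\rd v$ with $a_\ell,b_\ell\in\cO_\wta$, and recalling that $\{u=0\}$ is invariant for a foliation generated by $a\rd u + b\rd v$ precisely when $u\mid b$.

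The two cases $(i,j)=(1,2)$ and $(2,1)$ being symmetric, assume $\mu_1(\wta)=0$ and $\mu_2(\wta)\neq0$. Since $\mu_1(\wta)=0$, both $a_1$ and $b_1$ lie in $\bm_\wta$, so the coefficient $a_1b_2-a_2b_1$ of $\mu_1\wedge\mu_2$ lies in $\bm_\wta$; being equal to $Unit\cdot\cM$ with $\cM=u^k$ a monomial in $\wtE=\{u=0\}$, this forces $k\geq1$, i.e. $\cM=u^{1+t}$ with $t:=k-1\geq0$. Next, $\mu_1(\wta)=0$ makes $\wta$ a singular point of $\cF_1$, hence a simple singularity adapted to $\wtE$, so $\{u=0\}$ is invariant for $\cF_1$, i.e. $u\mid b_1$. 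Restricting $a_1b_2-a_2b_1 = Unit\cdot u^{1+t}$ to $\{u=0\}$ gives $a_1(0,v)\,b_2(0,v)\equiv0$ (using $b_1(0,v)\equiv0$); since $\mu_1$ is primitive, $a_1(0,v)$ and $b_1(0,v)$ cannot both vanish identically, so $a_1(0,v)\not\equiv0$, whence $b_2(0,v)\equiv0$, that is $u\mid b_2$ and $\{u=0\}$ is invariant for $\cF_2$ as well. As $\mu_2(\wta)\neq0$, the point $\wta$ is a regular point of $\cF_2$, and the normal form at the end of Section~\ref{section:res-foliat} for a regular point of a foliation on a smooth invariant component gives $\mu_2 = \rd u + u(\ldots)\rd v$ (equivalently: $u\mid b_2$ while $a_2$ is a unit), which is the remaining assertion.

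The coordinate bookkeeping is routine; the only genuine point is the transfer of invariance of $\{u=0\}$ from $\cF_1$ to $\cF_2$, where the coupling of the two foliations through the single monomial $\cM$ is used together with the adaptedness of the singularity of $\cF_1$ and the primitivity of $\mu_1$ supplied by Theorem~\ref{thm:main}; the rest is a direct appeal to the local normal forms of Section~\ref{section:res-foliat}.
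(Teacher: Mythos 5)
Your proof is correct, and since the paper supplies no argument (it presents the lemma as ``obvious''), yours is the natural filling of the gap. The chain of reasoning is exactly what the setup supports: $\mu_i(\wta)=0$ forces the coefficient of $\mu_1\wedge\mu_2$ to lie in $\bm_\wta$, hence $\cM = u^{1+t}$; adaptedness of the singularity of $\cF_i$ to $\wtE$ gives invariance of $\{u=0\}$ for $\cF_i$; restricting the wedge identity to $\{u=0\}$ and using primitivity of $\mu_i$ (which Theorem~\ref{thm:main} does provide, since $\cF_i$ is a foliation in the sense of Definition~\ref{def:foliation}) transfers the invariance to $\cF_j$; and that together with $\mu_j(\wta)\neq0$ yields the stated normal form. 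The only minor caveat is that you take $\cM$ to be a pure power of $u$ as a hypothesis rather than deducing it; this is consistent with the paper's own usage, where the monomials $\cM_1,\cM_2,\cM_{1,2}$ at a smooth point of $\wtE$ with adapted coordinates are all read as powers of $u$, but it would be worth flagging that one point explicitly.
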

When $\wta$ is a regular point of $\wtE$, we find $\cM_1 = u^{r}$ and $\cM_2 = u^s$ with $s\geq r\geq 0$. 
Let us write $\mu_i = a_i \rd u + b_i \rd v$, and $x_w$ for $\dd_w (\wtpi^*x)$ and 
$y_w$ for $\dd_w (\wtpi^*y)$, where $w$ is either $u$ or $v$. From Equations (\ref{eq:diff-mu1}) and 
(\ref{eq:diff-mu2}) we find the following relations: 
\begin{eqnarray}
\label{eq:xu+Ayu}
x_u + A y_u & = & u^r \psi_1 a_1\\
\label{eq:Bxu+yu}
B x_u + y_u & = & u^r \psi_2 (u^{s-r}a_2) \\  
\label{eq:xv+Ayv}
x_v + A y_v & = & u^r \psi_1 b_1\\
\label{eq:Bxv+yv}
B x_v + y_v & = & u^r \psi_2 (u^{s-r}b_2) 
\end{eqnarray}
where each $\psi_i$ is a local unit.

We deduce that $x = a_0 + u^{1+r} X (u,v)$ and $y = b_0 + u^{1+r} Y(u,v)$ for constants $a_0,b_0$.
So we can write 
\begin{center}
$\begin{array}{rcl}
X & = & x_0 + vx_1(v) + ux_2 (u) + uv x_3(u,v) 
\\ 
\vspace{4pt}
Y & = & y_0 + vy_1(v) + uy_2 (u) + uv y_3(u,v) .
\end{array}
$
\end{center}
We are using this local description of the blowing-up mapping to obtain the following 
possible local forms.
\begin{proposition}\label{prop:norm-form-smooth}
Assume the point $\wta$ is a regular point of $\wtE$.

\smallskip\noindent
1) If $\mu_1 (\wta) \neq 0$, then we find $\mu_1 = \rd u + u(\cdots)\rd v$. 

\smallskip\noindent
2) Suppose $\mu_1 (\wta) =0$ and write 
\begin{center}
$\mu_1 = [v^k \phi (v) + uc_1(u,v)] \rd u + uD \rd v$, with $k=1$ and $\phi (0) \neq 0$ if $D(\wta) =0$. 
\end{center}
We are in one of the situations listed below:

i) Suppose $k=1$ and $D(\wta) \neq 0$. We can choose the local regular coordinates $(u,v)$ centered at $\wta$
and adapted to $\wtE$ such that $x = a_0 + u^{r+1}v$ and $y = b_0 \pm u^{r+1}$ 
and thus $r=s$ and $t=1$. Moreover we find out that $\mu_2 = \rd u + u (\ldots)\rd v$.

ii) If $k=1$ and $D(\wta) = 0$, then $r=s$ and $\mu_2 = \rd u + u(\cdots)\rd v$.

iii) If $D(\wta) \neq 0$ and $k\geq 2$, then the conclusion of point i) holds true. 
\end{proposition}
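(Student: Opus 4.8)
The plan is to work locally in coordinates adapted to $\wtE$ and exploit the monomialization already established in Theorem \ref{thm:main} together with the explicit relations (\ref{eq:xu+Ayu})--(\ref{eq:Bxv+yv}) for the blowing-up map. Throughout, recall that $\mu_1\wedge\mu_2 = Unit\cdot\cM(\rd u\wedge\rd v)$ with $\cM = \cM_1\cM_2\cM_{1,2}$, and that each $\mu_i$ only has singularities adapted to $\wtE$, so at a smooth point $(\wtE,\wta)=\{u=0\}$ the only admissible local forms for $\mu_i$ are $\rd u + u(\cdots)\rd v$ (when $\mu_i(\wta)\neq 0$), or $\rd v$ up to a coordinate change (normal crossing case, impossible if $\cM$ vanishes at $\wta$), or a simple singularity $\lambda x\rd y - y\rd x + \te$ rewritten as in Equation (\ref{eq:form-1})/(\ref{eq:form-2}) of Section \ref{section:res-foliat}. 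That trichotomy is what drives the case split in the statement.

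First I would dispatch point 1): if $\mu_1(\wta)\neq 0$, then by the normal forms at the end of Section \ref{section:res-foliat} and the fact that $(\wtE,\wta)=\{u=0\}$ is invariant (it must be, since $\cM$ is a nontrivial monomial in $u$ vanishing along $\wtE$, forcing $\{u=0\}$ to be non-di-critical for $\cF_1$ by Remark \ref{rem:noether} and the adaptedness in Theorem \ref{thm:main} i)), the only possibility is $\mu_1 = \rd u + u(\cdots)\rd v$ after possibly renaming coordinates. This is immediate. For point 2) we assume $\mu_1(\wta)=0$, so $\wta$ is an adapted simple singularity of $\cF_1$; writing $\mu_1 = (v^k\phi(v) + uc_1(u,v))\rd u + uD\rd v$ is exactly the normal form (\ref{eq:form-1}) when $D(\wta)\neq 0$ and (\ref{eq:form-2}) when $D(\wta)=0$ (with $k=1$ in the latter case).

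For the three sub-cases the key step is to read off the exponents $r,s$ (where $\cM_1=u^r$, $\cM_2=u^s$, $s\geq r$) and $t$ (from $\cM_{1,2}=u^{1+t}$, justified by Lemma \ref{lem:imp-norm-pair} applied to the pair $(\mu_1,\mu_2)$ at whichever one is nonzero). In sub-case (i), $k=1$ and $D(\wta)\neq 0$: then $\mu_1\wedge\mu_2 = Unit\cdot u^{1+t}\rd u\wedge\rd v$, and since $\mu_1 = (v\phi+uc_1)\rd u + uD\rd v$ with $D$ a unit, one computes directly that $\mu_2(\wta)\neq 0$ (otherwise $\cF_1$ and $\cF_2$ would be tangent to first order at $\wta$, contradicting generic transversality combined with the bound $t=1$ forced by the wedge), whence $\mu_2 = \rd u + u(\ldots)\rd v$; feeding this and $\mu_1$ into (\ref{eq:xu+Ayu})--(\ref{eq:Bxv+yv}) and using $x = a_0 + u^{1+r}X$, $y = b_0 + u^{1+r}Y$ with $X,Y$ expanded as in the display preceding the proposition, one solves for the leading terms of $X$ and $Y$: the relation coming from $\mu_1$ forces (after the linear change in $x,y$ absorbing $A,B\in\bm_\wta$) that $Y$ has no $v$-linear part while $X$ does, giving $x = a_0 + u^{r+1}v$, $y = b_0\pm u^{r+1}$ after a further analytic coordinate change in $v$ and rescaling; comparing volumes then yields $r=s$ and $t=1$. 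Sub-case (ii), $k=1$ and $D(\wta)=0$: here $\mu_1$ is of type (\ref{eq:form-2}), $\mu_1 = (v+b_0u)\rd u + u\te$ with $\te\in\bm_\wta\Omg_\wta^1$; again from $\mu_1\wedge\mu_2$ being $u^{1+t}\cdot Unit$ and the adaptedness one gets $\mu_2(\wta)\neq 0$, so $\mu_2 = \rd u + u(\cdots)\rd v$, and the Jacobian computation via (\ref{eq:xu+Ayu})--(\ref{eq:Bxv+yv}) forces $r=s$. Sub-case (iii), $D(\wta)\neq 0$ and $k\geq 2$: this is handled by first blowing up $\wta$ to reduce $k$ (as in the proof of Lemma \ref{lem:adapt-2}, case 1b, where each blow-up drops the exponent by one), observing the monomialization persists, and noting that after $k-1$ blow-ups we land in the configuration of sub-case (i); then one checks that pulling the resulting coordinates back and re-blowing-down (or equivalently that the original $(u,v)$ already witness the conclusion of (i), since the further centers all lie over $\wta$ and the statement is about the existence of \emph{some} adapted coordinates) gives the conclusion of (i) verbatim.

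The main obstacle I anticipate is sub-case (i): carefully carrying out the substitution of the explicit $\mu_1,\mu_2$ into the four relations (\ref{eq:xu+Ayu})--(\ref{eq:Bxv+yv}), tracking how the units $\psi_1,\psi_2$ and the germs $A,B$ can be absorbed by legitimate coordinate changes (analytic in $(u,v)$, adapted to $\wtE$, and preserving the basepoint), and verifying that no obstruction prevents normalizing simultaneously to $x = a_0 + u^{r+1}v$ and $y = b_0\pm u^{r+1}$. In particular one must check that the $uv$- and higher-order terms $x_3(u,v)$, $y_3(u,v)$ and the $u$-linear term $x_2(u)$ in the expansions of $X,Y$ can be killed or absorbed, using that $D$ (resp. $\phi$) is a unit, and that the sign $\pm$ in $y = b_0\pm u^{r+1}$ is the only remaining invariant. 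The identities $r=s$ and $t=1$ then follow from comparing $\mu_1\wedge\mu_2$ computed in the normalized coordinates with $Unit\cdot u^{1+t}\rd u\wedge\rd v$ and with $\cM=\cM_1\cM_2\cM_{1,2}=u^{r+s+1+t}$. The remaining sub-cases are genuinely easier and reduce to (i) or to a short Jacobian bookkeeping.
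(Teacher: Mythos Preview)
Your overall strategy — feed the normal forms of $\mu_1,\mu_2$ into the four relations (\ref{eq:xu+Ayu})--(\ref{eq:Bxv+yv}) and solve for the leading coefficients of $X,Y$ — is exactly the paper's. However, three steps in your execution are genuine gaps rather than details to fill in.

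\textbf{Point 1).} You argue that $\{u=0\}$ must be invariant for $\cF_1$ because ``$\cM$ is a nontrivial monomial in $u$''. But $\cM=\cM_1\cM_2\cM_{1,2}$, and the vanishing of the product tells you nothing about $\cF_1$ individually; nothing you cite excludes the di-critical possibility $\mu_1=\rd v$ a priori. The paper does not use invariance at all: it simply reads (\ref{eq:xv+Ayv}) as $u^{r+1}(X_v+AY_v)=u^r\psi_1 b_1$, whence $b_1\in u\,\cO_\wta$ directly.

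\textbf{Points 2)i) and 2)ii).} You assume early that $\mu_2(\wta)\neq 0$, justifying it by ``otherwise $\cF_1$ and $\cF_2$ would be tangent to first order''. This is circular: two foliations can both have an adapted simple singularity at a smooth point of $\wtE$ with $\mu_1\wedge\mu_2=Unit\cdot u^m\,\rd u\wedge\rd v$ for $m\geq 2$, and nothing in Theorem \ref{thm:main} forbids it beforehand. In fact the Remark immediately following the proposition flags exactly this: the non-simultaneous-singularity is an \emph{unexpected consequence} of the proposition, not an input. The paper instead first normalizes $x$ (and then $y$) via explicit coordinate changes, and only after deriving $y_0\neq 0$ and $r=s$ does it read off $\mu_2(\wta)\neq 0$ (in case ii) this comes from (\ref{eq:Bxu+yu}) once $Y$ is normalized).

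\textbf{Point 2)iii).} Your proposal to blow up $\wta$ repeatedly to reduce $k$ to $1$ and then invoke case i) does not prove the statement. The proposition asserts the existence of adapted coordinates \emph{at $\wta$ on $\wtX$}; after blowing up you are on a different surface, and there is no ``re-blowing-down'' that transports coordinates from a point of the exceptional curve back to $\wta$. Your parenthetical escape (``the original $(u,v)$ already witness the conclusion'') is exactly what needs proving. The paper handles this case without any blow-up: from (\ref{eq:xv+Ayv}) one gets $x_1(0)\neq 0$ (since $D$ is a unit), while the analogue of (\ref{eq:xu+Ayu-simple}) with $k\geq 2$ forces $(x_1+y_0A_1)(0)=0$, hence $y_0\neq 0$; two successive coordinate changes (one in $u$, one in $v$) then put $(x,y)$ in the form of case i).
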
 
\begin{proof}
1) Since $u^{r+1} (X_v + A Y_v) = u^r \psi_1 b_1$, we get $b_1 = u c_1$ for some $c_1\in \cO_\wta$. 

\medskip\noindent
2) Suppose that $\mu_1 (\wta)= 0$.

\smallskip 
i) Assume that $\mu_1 = u \rd v + (v\phi + uc_1)\rd u$ with $\phi$ a local analytic unit 
such that $-\phi (0) \notin \Q_{>0}$. Since $a_1(\wta) = 0$ we find that $x_0 =0$. 
From Equation (\ref{eq:xv+Ayv}), we find $X_v + AY_v = \psi_1$ with $A(\wta) =0$, and we see 
that $x_1(0) \neq 0$. Let $\brv := X(v,u)$. Thus $(u,v) \to (u, X(u,v))$ is a regular change 
of coordinates so that $x = a_0 + u^{r+1}\brv$.
Since $v = \brv z_1 (\brv) + u z_2(u,\brv)$, with $z_1 (0) \neq 0$, we deduce 
that $\mu_1 = Unit [u\rd \brv + (\brv \bar\phi (\brv) + u \bar{c}_1)\rd u]$ with $\bar\phi(0) \neq 0$.

\smallskip\noindent 
Suppose the coordinates $(u,v)$, centered at $\wta$ 
and adapted to $\wtE$, are also such that 
\begin{center}
$\mu_1 = u \rd v +(v\phi (v) +  uc_1)\rd u$, with $-\phi (0) \notin \Q_{\geq 0}$,
\end{center}
and $x = a_0 + u^{r+1}v$. Since $\psi_1\cdot u^r \mu_1 = \rd x + A \rd y$, we get  
\begin{center}
$\psi_1\cdot [u \rd v + (v\phi + uc_1)\rd u] = [(r+1)v\rd u + u \rd v] + A[((r+1)Y + uY_u)\rd u + uY_v \rd v]$  
\end{center}
with $A(\wta) =0$ and $\psi_1$ a local analytic unit.
Thus we find
\begin{center}
$
\begin{array}{rcl}
\psi_1 & = & 1 + u A Y_v
\\
\psi_1 (v\phi + uc_1) & = & u + A[(r+1)Y + uY_u]
\end{array}
$
\end{center}
We deduce that $y_0 \neq 0$, so that $Y$ is a local analytic unit, and thus $y = b_0 + u^{r+1} Y$.
Let $\ve$ be the sign of $y_0$. Let $\bru (u,v) = u (\ve Y)^{\frac{1}{r+1}}$. The change of coordinates $(u,v) \to (\bru,v)$ 
is regular, centered at $\wta$ and adapted to $\wtE$, and we have $y = b_0 + \ve \bru^{r+1}$.
Thus $x = a_0 + \zeta(\bru,v)\bru^{r+1}v$ for a local analytic unit $\zeta$. 
Thus taking $\brv := v \zeta$, we have found local coordinates centered at $\wta$ adapted to $\wtE$ and 
such that 
\begin{center}
$x = a_0 + \bru^{r+1}\brv$ and  $y = b_0 + \ve\bru^{r+1}$
\end{center}
so that $r=s$. This implies 
that $t=1$, which can only occur if $\mu_2(\wta) \neq 0$ (otherwise $t\geq 2$).
Since $r=s$ and $\mu_2(\wta) \neq 0$, we deduce from point 1) that $\mu_2 = Unit(\rd\bru + 
\bru \bar{c}_2 \rd \brv)$.

\smallskip
ii) Assume that $\mu_1 = (v + uc_1) \rd u + u D \rd v$ with $D(\wta) = 0$.
Let us write $A = vA_1(v) + u(\ldots)$. 
Equation (\ref{eq:xu+Ayu}) provides
\begin{equation}\label{eq:xu+Ayu-simple}
(1+r)[x_0 + v(x_1(v) + [y_0 +vy_1(v)] A_1(v)) + u(\ldots)] = [v\psi_1(0,v) + u(\ldots)]. 
\end{equation}
We deduce that 
\begin{center}
$x_0 =0$ and $(1+r)v(x_1(v) + y_0A_1(v)) = v\psi_1(0,v)$.
\end{center}
Thus $x_1 (v)+ y_0 A_1(v)$ is an analytic unit. Since $D(\wta) = A(\wta) =0$ and by Equation (\ref{eq:xv+Ayv})
we get
\begin{equation}
X_v + A Y_v = \psi_1\cdot D.
\end{equation}
We deduce that $x_1 (0) = 0$ so that $y_0 \neq 0$. Up to a change of coordinates as in i), 
we can assume that $Y = b_0 \pm u^{r+1}$. Equation (\ref{eq:Bxu+yu}) reads 
\begin{equation}
B[(r+1)X + uX_u] + (r+1)Y +u Y_u = \psi_2 u^{s-r} a_2,
\end{equation}
and provides $a_2(\wta) \neq 0$ and $r=s$. Tanks to this latter condition we are back in point 1) 
by permuting $\mu_1$ and $\mu_2$ so that $\mu_2 = \rd u + u(\ldots)\rd v$. 
 
\smallskip\noindent
iii) Suppose $\mu_1 = (v^{k+2}\phi(v) + uc_1)\rd u + u\rd v$ with $k\geq 0$ so that $D\equiv 1$.
From Equation (\ref{eq:xv+Ayv}), we deduce that $x_1(0) =1$. Adapting Equation (\ref{eq:xu+Ayu-simple})
to our situation we get 
\begin{center}
$(x_1 + y_0 A_1)(0) = (v^{k+1}\psi_1(0,v))(0)=0$, 
\end{center}
so that $y_0A_1(0) = - x_1 (0) = 1$. So we have $x_1(0) \neq 0, y_0 \neq 0$, thus we reach, 
after two changes of variables (one to change $u$ and the next one to change $v$, the same 
conclusion as i).
\end{proof}
\begin{remark}
An obvious, but unexpected, consequence of Proposition \ref{prop:norm-form-smooth} is that any regular point of 
$\wtE$ cannot be a simultaneous singular point of both (local) foliations $\cF_1$ and $\cF_2$. Moreover according 
to our notations, at any regular point $\wta$ of $\wtE$ we can assume that we always have $\mu_1 (\wta) \neq 0$.
\end{remark}
We now use these pairs of normal forms to obtain the following Hsiang \& Pati type result
\begin{proposition}\label{prop:diff-smooth}
Let $\wta$ be a regular point of $\wtE$. 

There exist local coordinates $(u,v)$ centered at $\wta$ and adapted to $\wtE = \{u=0\}$ 
such that 

\smallskip
1) If $\mu_2 (\ula) \neq 0$, then the module $\wtpi^*\Omega_{M_0}^1$ is locally 
generated at $\ula$ by $\rd (u^{r+1})$ and $\rd (u^{s+1+m}v)$ 
for a non-negative integer $m$. 

\smallskip
2) If $\mu_2 = a_2\rd u + u\rd v$ with $a_2\in\bm_\wta$, then $t=1$ and
the module $\wtpi^*\Omega_{M_0}^1$ is locally generated at $\wta$ 
by $\rd (u^{r+1})$ and $\rd (u^{s+1}v)$. 

\smallskip
3) If $\mu_2 = (v+uc_2) \rd u + u e_2\rd v$ with $e_2\in\bm_\wta$, 
then $t\geq 2$ and the module $\wtpi^*\Omega_{M_0}^1$ is locally generated at $\wta$ 
by $\rd (u^{r+1})$ and $\rd (u^{s+t}v)$. 
\end{proposition}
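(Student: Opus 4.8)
The plan is to combine Proposition~\ref{prop:differential} — which gives that $(D\wtpi)^*\Omg_{M_0}^1$ is generated at $\wta$ by $\cM_1\mu_1=u^r\mu_1$ and $\cM_2\mu_2=u^s\mu_2$ with $s\ge r\ge 0$ — with the coordinate description of $\wtpi$ from Section~\ref{subsection:prel-form}. Since $(D\wtpi)^*\Omg_{M_0}^1$ is free of rank two, it suffices to exhibit \emph{inside} it two forms whose wedge agrees, up to a unit, with $u^{r+s}(\mu_1\wedge\mu_2)$; two such forms then automatically generate. By the Remark after Proposition~\ref{prop:norm-form-smooth} I may take $\mu_1(\wta)\ne 0$, so $\mu_1=\rd u+b_1\rd v$ with $b_1\in u\cO_\wta$ (Proposition~\ref{prop:norm-form-smooth}(1)), i.e. $a_1=1$ in Equations~\eqref{eq:xu+Ayu}--\eqref{eq:Bxv+yv}. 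Writing the two chosen target coordinates as $x=a_0+u^{r+1}X$, $y=b_0+u^{r+1}Y$ (Section~\ref{subsection:prel-form}), the first step is that $X$ is a unit: dividing Equation~\eqref{eq:xu+Ayu} by $u^r$ and evaluating at $\wta$, where $A(\wta)=0$ and $u^{-r}x_u|_\wta=(r+1)X(\wta)$, gives $(r+1)X(\wta)=\psi_1(\wta)\ne 0$. Hence the change $\bru:=u(\ve X)^{1/(r+1)}$ ($\ve$ the sign of $X(\wta)$) is a regular coordinate change adapted to $\wtE$ and yields $x=a_0+\ve\,\bru^{\,r+1}$, so that $\rd(\bru^{\,r+1})=\ve\,(D\wtpi)^*\rd x$ already lies in the module.

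Writing $y=b_0+\bru^{\,r+1}g(\bru,v)$ in the new coordinates, a short module manipulation — subtract $g\,\rd(\bru^{\,r+1})$ and $\frac{1}{r+1}\bru\,g_{\bru}\,\rd(\bru^{\,r+1})$ from $\rd(\bru^{\,r+1}g)=(D\wtpi)^*\rd y$, both being $\cO_\wta$-multiples of the element just produced — gives
\[
(D\wtpi)^*\Omg_{M_0}^1=\bigl\langle\,\rd(\bru^{\,r+1}),\ \bru^{\,r+1}g_v\,\rd v\,\bigr\rangle .
\]
The key point is that $g_v$ is a \emph{bona fide} unit times $\bru^{\,s+t-r-1}$, where $t$ is the $\wtE$-order of the coefficient of $\mu_1\wedge\mu_2$ (equivalently $\cM=\cM_1\cM_2\cM_{1,2}=u^{r+s+t}$). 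Indeed $\{u^r\mu_1,u^s\mu_2\}$ and $\{(D\wtpi)^*\rd x,(D\wtpi)^*\rd y\}$ are two bases of the same free rank-two module, so the Jacobian determinant $x_uy_v-x_vy_u$ of the $(x,y)$-component map equals $u^{r+s}$ times the coefficient of $\mu_1\wedge\mu_2$ up to a unit; by Section~\ref{subsection:prel-form} ($\adj(Q)\cdot Q=\psi\cM\cdot Id$ with $\psi$ nonvanishing) this determinant is \emph{genuinely} a unit times $u^{r+s+t}$. In the coordinates above $x_v=0$ and $x_{\bru}=\ve(r+1)\bru^{\,r}$, so the same determinant equals $\ve(r+1)\bru^{\,2r+1}g_v$; comparing forces $g_v=Unit\cdot\bru^{\,s+t-r-1}$ with $s+t-r-1\ge 0$. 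Therefore $\bru^{\,r+1}g_v\,\rd v=Unit\cdot\bru^{\,s+t}\,\rd v$, and since $\rd(\bru^{\,s+t}v)-\bru^{\,s+t}\rd v=(s+t)\bru^{\,s+t-1}v\,\rd\bru$ is an $\cO_\wta$-multiple of $\rd(\bru^{\,r+1})$ (as $s+t-1\ge r$), I obtain $(D\wtpi)^*\Omg_{M_0}^1=\bigl\langle\,\rd(\bru^{\,r+1}),\ \rd(\bru^{\,s+t}v)\,\bigr\rangle$.

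It then remains to read off $t$ in each case by a direct wedge computation with $\mu_1=\rd u+b_1\rd v$, $b_1\in u\cO_\wta$. In case~(1), $\wtE$ is invariant for $\cF_1$ and hence (since every component of the relevant nc-divisor is invariant or di-critical for both foliations) for $\cF_2$ as well, so $\mu_2=\rd u+b_2\rd v$ with $b_2\in u\cO_\wta$ and $\mu_1\wedge\mu_2=(b_2-b_1)\rd u\wedge\rd v$; its $\wtE$-order is $t\ge 1$, and setting $m:=t-1\ge 0$ gives $s+t=s+1+m$. In case~(2), $\mu_2=a_2\rd u+u\rd v$ with $a_2(\wta)=0$, so $\mu_1\wedge\mu_2=(u-b_1a_2)\rd u\wedge\rd v$ with $u-b_1a_2=u\cdot Unit$, hence $t=1$ and $s+t=s+1$. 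In case~(3), $\mu_2=(v+uc_2)\rd u+ue_2\rd v$ with $e_2(\wta)=0$; writing $b_1=u\tilde b_1$, one has $\mu_1\wedge\mu_2=u\bigl(e_2-\tilde b_1(v+uc_2)\bigr)\rd u\wedge\rd v$ whose bracket vanishes at $\wta$, so it is not a unit, yet Theorem~\ref{thm:main} forces it to equal $Unit\cdot u^{t-1}$; hence $t\ge 2$, and $s+t$ is as stated. The main obstacle is the precision in the middle paragraph: one truly needs $g_v$ to be an honest unit times $\bru^{\,s+t-r-1}$, not merely to have that order along $\wtE$ — which is exactly what the nonvanishing of $\psi$ in $\adj(Q)\cdot Q=\psi\cM\cdot Id$ provides. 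Everything else is $u$-exponent bookkeeping and checking that the coordinate changes are analytic and adapted to $\wtE$, in the spirit of the proof of Proposition~\ref{prop:norm-form-smooth}.
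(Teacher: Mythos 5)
Your Jacobian argument is a genuinely different, and in fact cleaner, route than the paper's. The paper's proof attacks each case directly with the coefficient equations \eqref{eq:xu+Ayu}--\eqref{eq:Bxv+yv}; you instead normalize $x = a_0 \pm \bru^{r+1}$, $y = b_0 + \bru^{r+1}g$, reduce the module to $\langle \rd(\bru^{r+1}),\ \bru^{r+1}g_v\,\rd v\rangle$, and then pin down the exact power of $\bru$ in $g_v$ by equating $x_u y_v - x_v y_u$ with $Unit\cdot u^{r+s+t}$. (The Jacobian identity is best justified by wedging Equations \eqref{eq:diff-mu1} and \eqref{eq:diff-mu2} and using that $1-AB$ is a unit, rather than through $\adj(Q)Q = \psi\cM\,\mathrm{Id}$ directly, but the conclusion is right.) This unified step replaces three separate computations in the paper, and the forced inequality $s+t \ge r+1$ falls out of regularity of $g_v$ for free.

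There is, however, a gap in the last paragraph where you read off $t$ in case (1). You claim that since $\wtE$ is invariant for $\cF_1$ it is invariant for $\cF_2$ as well, ``since every component of the relevant nc-divisor is invariant or di-critical for both foliations,'' and hence $t\ge 1$. That inference is not available: the end of Section~\ref{section:pairs-foliat} only asserts a common invariance type for components of the \emph{tangency} locus $\Sgm(\cF_1',\cF_2')$, and when $t=0$ the two foliations are transverse along $\wtE$, so $\wtE$ is not a component of $\Sgm$. Indeed, the paper's own proof treats the sub-case $t=0$ explicitly, taking $\mu_2 = \rd v$ (so $\wtE$ is di-critical for $\cF_2$). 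You therefore either need to justify that $t\ge 1$ always holds at a regular point of $\wtE$, or handle $t=0$. Notice, though, that your Jacobian step already handles $t=0$ without change and produces the second generator $\rd(u^{s+t}v) = \rd(u^s v)$ (with $s\ge r+1$ forced); this disagrees with the paper's stated form $\rd(u^{s+1+m}v)$, $m\ge 0$, and with the paper's own $t=0$ computation ``$Y_v = \psi_2 u^{s-r}$'', both of which appear to be off by one (Equation~\eqref{eq:Bxv+yv} with $x_v=0$, $b_2=1$, and $y = b_0 + u^{r+1}Y$ gives $Y_v = \psi_2 u^{s-r-1}$, and this is the value that makes $x_uy_v - x_vy_u = Unit\cdot u^{r+s}$ consistent with $t=0$). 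So rather than trying to show $t\ge 1$, the more robust fix is to keep the uniform answer $\rd(u^{s+t}v)$ from your Jacobian computation and flag the $t=0$ sub-case of the statement for re-examination.
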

\begin{proof}
For simplicity let $\Te : = \wtpi^*\Omg_{M_0}^1$. We recall that 
$\mu_1\wedge\mu_2 = Unit\cdot u^t \rd u\wedge\rd v$. 

\smallskip\noindent
By Proposition \ref{prop:norm-form-smooth} we find $\mu_1 = \rd u + u c_1 \rd v$.
Equation (\ref{eq:xu+Ayu}) gives $x_0 \neq 0$.
Up to an adapted change of coordinates in $u$, we assume that $x = a_0 \pm u^{r+1}$.
Up to replacing $y$ by $y \pm y_0 x$, we assume that $y_0 = 0$. 

\medskip
1) Suppose $\mu_2 (\wta) \neq 0$.

If $t =0$, then we can assume that $\mu_2 = \rd v$. 
Equation (\ref{eq:Bxv+yv}) provides $Y_v = \psi_2 u^{s-r}$, so that up to changing 
$v$ by $Unit \cdot v$, we can assume $y = b_0 + u^{r+2}y_1(u) + u^{s+1}v$. So we get 
result in this case.

\smallskip
If $t\geq 1$ and $\mu_2 (\wta) \neq 0$ then we deduce $\mu_2 = \rd u +  (uc_1 + Unit\cdot u^t)\rd v$. 
And we still have $\mu_2 = u\rd v+ a_2\rd u$ but only $\mu_1 = \rd u + u(\ldots)\rd v$. 
Writing $Y = uy_1(u) + u^pvZ(u,v)$ for a non negative integer $p$ and with $Z$ such that $Z(0,v) \not\equiv 0$,
we deduce from Equations(\ref{eq:xv+Ayv}) and (\ref{eq:Bxv+yv}) 
\begin{center}
$A Y_v = u^p A (Z+vZ_v) = \psi_1 c_1$ and $Y_v = \psi_2 u^{s-r}(c_1 + u^{t-1}\psi_3)$   
\end{center}
where $\psi_3$ is a local unit. Since $A(\wta) = 0$, we deduce 
$Z+vZ_v = Unit \cdot u^{s+t-r-1-p}$ and therefore $p = s-r+t-1$. 
Thus up to changing $v$ into $Unit \cdot v$, we can assume $y = b_0 + u^{r+2}y_1(u) + u^{s+t}v$. So we 
also get the desired result in this case.

\medskip
2) Assume $\mu_2 = u\rd v + a_2 \rd u$ with $a_2 \in \bm_\wta$. Thus $t = 1$ and from Equation (\ref{eq:Bxv+yv})
we get $Y_v = \psi_2 u^{r-s}$. So that after a change of coordinates in $v$
we can assume that $y = b_0 + u^{r+2}y_1(u) + u^{s+1}v$. And we find the announced result. 

\medskip
3) Suppose $\mu_2 = (v+uc_2) \rd u + u e_2\rd v$ with $e_2 \in \bm_\wta$. 
Thus we deduce $t\geq 2$.
Since $\mu_1 = \rd u + u c_1\rd v$ we deduce that that $\mu_2 = (\cdots)\mu_1 + u^t \rd v$. 
The module $\Te$ is generated by $u^r\mu_1$ and $u^{s+t}\rd v$.
We will use the following 
\begin{lemma}\label{lem:change-mu1}
Suppose $\mu_1 = \rd u + u^p C\rd v$, with $p\geq 1$ so that $C(0,v)\not\equiv 0$.
There exists a change of coordinates $(w,v) \to (w + w^p \alpha(v),v)$ such that $\alpha (0) =0$ and
\begin{center}
$\mu_1 = Unit [\rd w + w^{p+1}(\cdots) \rd v]$. 
\end{center}

\end{lemma}
\begin{proof}
Let $C = c_0(v) + u C_1 (u,v)$, so that $c_0 (v)$ is not identically $0$.
Let $u = w + w^p \alpha (v)$. 
Since $\rd u = [1+ pw^{p-1}\alpha(v)]\rd w + w^p\alpha'(v)\rd v$. 
We get 
\begin{center}
$\mu_1 = [1 + pw^{p-1}\alpha(v)]\rd w  + w^p(\alpha'(v) + c_0(v))\rd v + w^{p+1}\beta (v,w)\rd v$
\end{center}
with $\beta \in \cO_\wta$. Taking $\alpha$ the primitive of $-c_0$ vanishing at $v=0$ provides the result.   
\end{proof}
\begin{remark}\label{rem:formal-coord}
The equation we solve in $W:= 1 + w^{p-1}\alpha(v)$ in the proof of the Lemma admits a formal solution (that 
is a solution in the real formal power series in two variables), so that up to a formal change of 
coordinates $u = \bru W(\bru,v)$ 
for $W$ a formal power series and a unit, we would find $\mu_1 = \rd \bru$.
\end{remark}
Thanks to the Lemma, used finitely many times, we deduce (despite these changes of coordinates) 
that $\Te$ is generated by $u^r\rd u$ and $u^{s+t}\rd v$ which is the result.
\end{proof}
Using Hsiang \& Pati proof (and caring with the fact that $-1$ has no real square root), 
with a few elementary computations, we deduce from Proposition \ref{prop:diff-smooth}
the existence of Hsiang \& Pati coordinates:
\begin{corollary}[see also \cite{BBGM}]\label{cor:real-HP-smooth}
There exist adapted coordinates $(u,v)$ at the regular point $\wta$ such that the resolution mapping $\wtpi$
locally writes 
\begin{center}
$(u,v)  \to (x,y;z) = \wtpi(\wta) + (\pm u^{k+1},u^{k+1}f(u) + u^{l+1}v; u^{k+1}z(u) + u^{l+1}vZ(u,v)) \hfill$

\smallskip
$\hfill \in \R\times\R\times\R^{N-2}$
\end{center}
for non-negative integers $k,l$, for germs $f \in \R\{u\}$, $z \in \R\{u\}^{N-2}$ and 
$Z$ a regular map germ $(X,\ula) \to \R^{N-2}$.
\end{corollary}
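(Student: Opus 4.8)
The plan is to deduce Corollary \ref{cor:real-HP-smooth} directly from Proposition \ref{prop:diff-smooth}, mimicking the computation Hsiang \& Pati carry out in \cite[Section III]{HP}, with the single extra care that we are working over $\R$, so that $-1$ has no square root and the sign $\pm$ in front of $u^{k+1}$ cannot be absorbed. First I would fix a regular point $\wta$ of $\wtE$ and, invoking the remark following Proposition \ref{prop:norm-form-smooth}, reduce to the case $\mu_1(\wta)\neq 0$, so that $\mu_1=\rd u+uc_1\rd v$ in adapted coordinates $(u,v)$ with $(\wtE,\wta)=\{u=0\}$. Proposition \ref{prop:diff-smooth} then provides, in all three of its sub-cases, two explicit generators of $(D\wtpi)^*\Omega_{M_0}^1$ of the shape $\rd(u^{k+1})$ and $\rd(u^{l+1}v)$ with $0\le k\le l$: namely $k=r$ and $l=s$, $l=s+t-1$ or $l=s+t-1$ according to the case (after the normalizations $p=s-r+t-1$, $t=1$, $t\ge 2$ respectively, and after absorbing units into $v$). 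I would record that $0\le k\le l$ follows from $s\ge r$ together with $t\ge 0$.

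Next I would translate "the module $(D\wtpi)^*\Omega_{M_0}^1$ is generated at $\wta$ by $\rd(u^{k+1})$ and $\rd(u^{l+1}v)$" into a normal form for the map $\wtpi$ itself. Choosing local coordinates $(x,y,z_3,\dots,z_n)$ at $\ula_0:=\wtpi(\wta)$ as in the paragraph preceding Equations (\ref{eq:diff-mu1})--(\ref{eq:diff-mu2}), so that $(D\wtpi)^*\rd x$ and $(D\wtpi)^*\rd y$ generate the rank-two module, I get relations
\begin{align*}
(D\wtpi)^*\rd x &= \alpha\,\rd(u^{k+1}) + \beta\,\rd(u^{l+1}v),\\
(D\wtpi)^*\rd y &= \gamma\,\rd(u^{k+1}) + \delta\,\rd(u^{l+1}v),
\end{align*}
with the matrix $\begin{pmatrix}\alpha&\beta\\\gamma&\delta\end{pmatrix}$ invertible over $\cO_\wta$. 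Integrating, $\wtpi^*x$ and $\wtpi^*y$ are functions of $(u^{k+1},u^{l+1}v)$ up to a unit linear change; a Taylor-expansion/implicit-function argument (exactly the one in the end of the proof of Proposition \ref{prop:norm-form-smooth}, changing first $u$ then $v$ by units) puts them in the form $x=\pm u^{k+1}$, $y= u^{k+1}f(u)+u^{l+1}v$ for some $f\in\C\{u\}$ — here the $\pm$ is forced because over $\R$ we may only extract an $(k+1)$-st root of $\pm Y$ when $Y$ is a unit of the appropriate sign, and if $k+1$ is even we cannot flip the sign. The remaining coordinates $z_j=\wtpi^*z_j$ lie in the same module $(D\wtpi)^*\Omega_{M_0}^1$ after differentiation, hence $\rd z_j\in\cO_\wta\,\rd(u^{k+1})+\cO_\wta\,\rd(u^{l+1}v)$, and integrating gives $z_j = u^{k+1}z_j^0(u) + u^{l+1}v\,Z_j(u,v)$ with $z_j^0\in\C\{u\}$ and $Z_j\in\cO_\wta$; packaging these into $z=(z_3,\dots,z_n)$, $z^0=(z_j^0)$, $Z=(Z_j)$ yields exactly the stated expansion.

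The main obstacle, as in Hsiang \& Pati's original argument, is the bookkeeping in the integration step: one must verify that after absorbing units the cross-terms really do organize into $u^{k+1}f(u)+u^{l+1}v$ with no leftover $v^2$ or higher-$v$ contamination, and that the change of coordinates needed to kill the unit in front of $u^{l+1}v$ is still adapted to $\wtE$ (i.e. of the form $v\mapsto Unit\cdot v$, $u\mapsto Unit\cdot u$, keeping $(\wtE,\wta)=\{u=0\}$). This is where Lemma \ref{lem:change-mu1} and Remark \ref{rem:formal-coord} are used, iterated finitely many times, to straighten $\mu_1$ to $\rd u$ up to a unit; the formal-coordinate remark guarantees the process is consistent, and a standard convergence argument (or simply bounding the number of iterations by the relevant multiplicity) makes it analytic. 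The sign subtlety contributes nothing beyond carrying the $\pm$ through; everything else is the real-analytic transcription of \cite[Section III]{HP}, which is also why we cite \cite{BBM} alongside. Once the normal form of $\wtpi$ is in hand, reading off that the pulled-back metric is quasi-isometric to $\rd u^{k+1}\tns\rd u^{k+1}+\rd(u^{l+1}v)\tns\rd(u^{l+1}v)$ is immediate from Equation (\ref{eq:metric-param-2}) and Proposition \ref{prop:diff-smooth}, giving the smooth-point half of the Corollary stated in the introduction.
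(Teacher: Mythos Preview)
Your proposal is correct and follows essentially the same approach as the paper, which gives only a one-sentence proof sketch (``Using Hsiang \& Pati proof (and caring with the fact that $-1$ has no real square root), with a few elementary computations, we deduce from Proposition \ref{prop:diff-smooth}\ldots''). Your version is in fact more explicit: the paper does not spell out the integration step for the $z_j$ components nor the bookkeeping you describe, so your outline is a faithful and more detailed unpacking of what the paper leaves implicit.
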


\medskip\noindent
$\bullet$ Assume now that $\wta$ is a corner point of $\wtE$. 

\medskip
If $\wta$ is a regular point of $\mu_i$, for $i=1$ or $2$, then we can write $\mu_i = \rd z + z(\cdots)\rd w$
where $(w,z) = (u,v)$ or $(v,u)$. In this case the logarithmic local generator of $\cF_i$ 
writes 
\begin{center}
$\mu_i^{\log} = z^{-1} \mu_i = \rdlg z + w (\cdots) \rdlg w$.
\end{center}

\smallskip
If $\wta$ is a singular point of $\mu_i$ (thus adapted to $\wtE$), for $i=1$ or $2$, then $\mu_i = w\rd z + z (\cdots) \rd w$
and where $(w,z) = (u,v)$ or $(v,u)$. 
The logarithmic local generator of $\cF_i$ writes in this case 
as 
\begin{center}
$\mu_i^{\log} = w^{-1}z^{-1} \mu_i = \rdlg z + (\cdots) \rdlg w$.
\end{center}

\medskip 
By Proposition \ref{prop:differential}, the sub-module $\wtpi^*\Omg_{M_0,\wtpi (\wta)}^1$ 
of $\Omg_{\wtX,\wta}^1$ is generated over $\cO_\wta$ by $\cM_1\mu_1$ and $\cM_2\mu_2$. 
Thus $\wtpi^*\Omg_{M_0}$, as a $\cO_\wtX$- sub-module of $\Omg_\wtX^1 (\log \wtE)$
is locally generated at $\wta$ by $\cM_1^{\log}\mu_1^{\log}$ and $\cM_2^{\log}\mu_2^{\log}$.

\smallskip
We know that 
\begin{center}
$\cM_1^{\log} = u^mv^n$ and $\cM_2^{\log} = u^pv^q$ with $\mu_1^{\log}\wedge\mu_2^{\log} = 
Unit\cdot u^rv^s (\rdlg u \wedge \rdlg v)$. 
\end{center}
We can assume by Theorem \ref{thm:main} that 
$p\geq m\geq 0$, $q\geq n\geq 0$ and $m+n\geq 1$. When $\mu_1$ or $\mu_2$ vanishes at $\wta$, we deduce $\max (r,s)\geq 1$.
Since the local coordinates $(u,v)$ are centered at $\wta$ and adapted to $\wtE$, for  $i=1,2$ 
we can write $\mu_i^{\log} = a_i \rdlg u + b_i \rdlg v$ (and $a_i$ or $b_i$ is a local
unit), so that, up to permuting $u$ and $v$ we can always assume that $a_1(\wta) \neq 0$, thus $m\geq 1$.

\smallskip
Using again Equations (\ref{eq:diff-mu1}) and (\ref{eq:diff-mu2}) we obtain the following 
relations:
\begin{eqnarray}
\label{eq:xu+Ayu-log}
ux_u + uA y_u & = & u^mv^n \psi_1 a_1\\
\label{eq:Bxu+yu-log}
uBx_u + uy_u & = & u^mv^n\psi_2 (u^{p-m}v^{q-n}a_2) \\  
\label{eq:xv+Ayv-log}
vx_v + vA y_v & = & u^mv^n \psi_1 b_1\\
\label{eq:Bxv+yv-log}
vB x_v + vy_v & = & u^mv^n \psi_2 (u^{p-m}v^{q-n}b_2) 
\end{eqnarray}
We deduce, up to changing $v$ into $Unit\cdot v$, that 
\begin{center}
$x = X_0 (v) \pm  u^mv^n\;$ and $\;y = Y_0(v)+ u^mv^n Y(u,v)$.
\end{center}
Since $m\geq 1$, Equations (\ref{eq:xv+Ayv-log}) and (\ref{eq:Bxv+yv-log}) imply
that $X_0 = a_0$ and $Y_0 = b_0$ are real numbers. 
So we can write 
\begin{center}
$
Y  =  y_0 + vy_1(v) + uy_2 (u) + uv y_3(u,v) .
$
\end{center}
From Equation (\ref{eq:xv+Ayv-log}), we deduce that necessarily $b_1 (0) \neq 0$,
thus we have deduced 
\begin{lemma}\label{lem:mu1-corner}
For any local coordinates $(u,v)$ centered at the corner point $\wta$ of $\wtE$ and 
adapted to $\wtE$ we find $\mu_1 = v\rd u + Unit\cdot u\rd v $.
\end{lemma}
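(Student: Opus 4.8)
The plan is to read off the normal form directly from the two facts established just above the statement: the permutation of $u$ and $v$ was arranged so that $a_1(\wta)\neq 0$, and Equation (\ref{eq:xv+Ayv-log}) then forces $b_1(\wta)\neq 0$ as well. In other words, in the logarithmic writing $\mu_1^{\log} = a_1\,\rdlg u + b_1\,\rdlg v$ \emph{both} coefficients are local units at $\wta$. The proof is then a short case analysis translating this back into a statement about $\mu_1$, using that $\cF_1$ has only singularities adapted to $\wtE$, so that at the corner point $\wta$ there are exactly two possibilities for $\cF_1$: either $\wta$ is a regular point of $\cF_1$, in which case $\mu_1 = \rd z + z(\cdots)\rd w$ with $\{w,z\}=\{u,v\}$ and $\mu_1^{\log} = z^{-1}\mu_1$; or $\wta$ is a singular point of $\cF_1$ adapted to $\wtE$, in which case $\mu_1 = w\rd z + z(\cdots)\rd w$ and $\mu_1^{\log} = (uv)^{-1}\mu_1$.

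First I would exclude the regular case. If $\mu_1 = \rd v + v\,h\,\rd u$ with $h\in\cO_\wta$ (the sub-case $z=v$), then $\mu_1^{\log} = \rdlg v + h\,\rd u = (h u)\,\rdlg u + \rdlg v$, so $a_1 = hu$ vanishes at $\wta$; in the sub-case $z=u$ one finds symmetrically that $b_1$ vanishes at $\wta$. Either way this contradicts $a_1(\wta)\neq 0$, $b_1(\wta)\neq 0$, so $\wta$ cannot be a regular point of $\cF_1$.

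Hence $\wta$ is a singular point of $\cF_1$ adapted to $\wtE$, and $\mu_1 = w\rd z + z\,g\,\rd w$ with $g\in\cO_\wta$ and $\{w,z\}=\{u,v\}$. Then $\mu_1^{\log} = (uv)^{-1}\mu_1$ has, in the basis $(\rdlg u,\rdlg v)$, one coefficient equal to $1$ and the other equal to $g$; since both coefficients must be units at $\wta$, this forces $g(\wta)\neq 0$, i.e. $g$ is a local unit, so $\mu_1 = w\rd z + Unit\cdot z\rd w$. Multiplying the generator $\mu_1$ by a suitable unit (which changes neither $\cF_1$ nor its generator beyond the usual ambiguity) we may normalize it so that the coefficient of $\rd u$ is exactly $v$, which gives $\mu_1 = v\rd u + Unit\cdot u\rd v$; the same conclusion holds for the other adapted system of coordinates by the symmetry $u\leftrightarrow v$. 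I do not expect a real obstacle here: the only care needed is to keep the four cases straight (regular/singular, together with the two choices of which of $u,v$ plays the role of $w$) and to verify, by the one-line computations above, that in the regular cases one logarithmic coefficient genuinely vanishes.
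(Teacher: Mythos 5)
Your proposal is correct and follows essentially the same route as the paper. The paper's ``proof'' of the lemma is implicit: having derived, just above the statement, that in $\mu_1^{\log}=a_1\,\rdlg u+b_1\,\rdlg v$ the coefficient $a_1$ is a unit (by the choice of permutation of $u,v$) and that $b_1(\wta)\neq 0$ (from Equation (\ref{eq:xv+Ayv-log})), the paper simply writes ``so that we have deduced'' and states the lemma. What you add is the explicit case analysis translating ``$a_1,b_1$ both units'' back into the normal form of $\mu_1$: ruling out both regular sub-cases (where $\mu_1^{\log}$ has one coefficient $hu$ or $hv$ vanishing at $\wta$) and then forcing, in the adapted-singular case $\mu_1=w\rd z+zg\rd w$, that $g$ be a unit. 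This matches the paper's intent; you've made visible a step the author left to the reader, and the normalization by a unit at the end is the usual harmless ambiguity in the choice of a generator of $\cF_1$.
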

\begin{proposition}\label{prop:diff-corner}
There exist local coordinates $(u,v)$ centered at $\wta$ adapted to $\wtE$ such that
$\wtpi^*\Omg_{M_0}^1$ is locally generated at $\wta$ as an $\cO_\wtX$-sub-module of $\Omg_\wtX^1 (\log \wtE)$
by 
\begin{center}
$u^mv^n\rdlg (u^mv^n)$ and $u^{r+p}v^{s+q} \rdlg (u^{r+p}v^{s+q})$. 
\end{center}
Equivalently $\wtpi^*\Omg_{M_0}^1$ is $\cO_\wta$-generated, as a submodule of $\Omg_{\wtX,\wta}^1$ 
nearby $\wta$ by $\rd (u^mv^n)$ and $\rd (u^{r+p}v^{s+q})$. Therefore the plane vectors $(m,n)$
and $(p+r,q+s)$ are linearly independent.
\end{proposition}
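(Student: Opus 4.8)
The preliminary computations in this subsection already put us in good shape: after an adapted change of coordinates we may assume $x = a_0 \pm u^m v^n$ (a pure monomial) and $y = b_0 + u^m v^n\, Y(u,v)$ with $Y = y_0 + v y_1(v) + u y_2(u) + uv\, y_3(u,v)$, while by Lemma \ref{lem:mu1-corner} we have $\mu_1 = v\, \rd u + Unit\cdot u\, \rd v$ in every adapted chart, so $\wta$ is an adapted singularity of $\cF_1$ with both branches of $\wtE$ invariant and $\mu_1^{\log} = \rdlg u + Unit\cdot\rdlg v$. Since both the monomial form of $x$ and the standard form of $\mu_1$ survive every further adapted change of coordinates, we are free to keep refining the chart. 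The first move is purely formal: from $\rd x = \pm\rd(u^m v^n)$ and $\rd y = Y\,\rd(u^m v^n) + u^m v^n\,\rd Y$ one gets, as an $\cO_\wta$-module,
\begin{equation*}
M := (D\wtpi)^*\Omg_{M_0}^1 = \bigl\langle\, \rd(u^m v^n),\ u^m v^n\,\rd Y\,\bigr\rangle,
\end{equation*}
so the whole point is to control $\rd Y$ modulo $\cO_\wta\cdot\rd(u^m v^n)$.

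The engine is the Jacobian identity. By Proposition \ref{prop:differential}, $M$ is generated at $\wta$ by $\cM_1\mu_1 = \cM_1^{\log}\mu_1^{\log}$ and $\cM_2\mu_2 = \cM_2^{\log}\mu_2^{\log}$, and these are generically independent (their wedge is a non-zero $2$-form), so $M$ is free of rank $2$ and $\rd x\wedge\rd y$ agrees, up to a unit, with $(\cM_1^{\log}\mu_1^{\log})\wedge(\cM_2^{\log}\mu_2^{\log})$. Using $\mu_1^{\log}\wedge\mu_2^{\log} = Unit\cdot u^r v^s\,\rdlg u\wedge\rdlg v$ (the defining property of $r,s$) on one side, and $\rd x\wedge\rd y = \pm u^m v^n\,\rd(u^m v^n)\wedge\rd Y$ on the other, one reads off (with the usual conventions for exponents that would be negative)
\begin{equation*}
\rd(u^m v^n)\wedge\rd Y = Unit\cdot u^{r+p-1}v^{s+q-1}\,\rd u\wedge\rd v ,
\end{equation*}
equivalently $m v\, Y_v - n u\, Y_u = Unit\cdot u^{r+p-m}v^{s+q-n}$, a statement which makes sense since $p\ge m\ge 1$ and $q\ge n$. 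Cramer's rule applied to this relation expresses $\rd(u^{r+p}v^{s+q})$ as $f\,\rd(u^m v^n) + g\cdot u^m v^n\,\rd Y$ with $f\in\cO_\wta$ and $g$ equal to $\bigl(m(s+q)-n(r+p)\bigr)$ times a unit; hence $\rd(u^{r+p}v^{s+q})\in M$ unconditionally, and $M = \bigl\langle\, \rd(u^m v^n),\ \rd(u^{r+p}v^{s+q})\,\bigr\rangle$ holds if and only if $m(s+q)-n(r+p)\neq 0$ — which is exactly the claimed linear independence of $(m,n)$ and $(p+r,q+s)$. So the whole statement reduces to this one non-degeneracy.

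That last point is settled by a case analysis on the type of $\wta$ for $\cF_2$, completely parallel to Propositions \ref{prop:norm-form-smooth}--\ref{prop:diff-smooth} at a smooth point of $\wtE$: by the results of Section \ref{section:pairs-foliat}, $\cF_2$ at the corner $\wta$ is either regular with one branch of $\wtE$ di-critical ($\mu_2 = \rd z + z(\cdots)\rd w$) or an adapted singularity ($\mu_2 = v\,\rd u + Unit\cdot u\,\rd v$). In each case one feeds the normal form of $\mu_2$, together with $\mu_1^{\log} = \rdlg u + Unit\cdot\rdlg v$, into the four scalar relations (\ref{eq:xu+Ayu-log})--(\ref{eq:Bxv+yv-log}) coming from (\ref{eq:diff-mu1})--(\ref{eq:diff-mu2}); this pins down the exact order of vanishing of $Y$ along $\{u=0\}$ and $\{v=0\}$ and shows that, after an adapted change $u\mapsto u\cdot Unit$, $v\mapsto v\cdot Unit$ (and, where needed, the formal-type adjustment of Lemma \ref{lem:change-mu1}), $Y$ equals a unit times $u^{r+p-m}v^{s+q-n}$, so that $y - b_0 = \pm u^{r+p}v^{s+q}$ in suitable adapted coordinates. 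Comparing the exponent $(p+r,q+s)$ read off this way with $(m,n)$, and using that $\rd x\wedge\rd y\neq 0$ (because $\wtpi$ is a local diffeomorphism onto the pure $2$-dimensional $X_0$ away from $\wtE$), forces the two vectors to be linearly independent; equivalently, this is the determinant condition already recorded in Theorem \ref{thm:main}\,iv).

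\textbf{Main obstacle.} The work lies entirely in the case analysis of the last step: one must combine the Jacobian identity with the scalar equations (\ref{eq:xu+Ayu-log})--(\ref{eq:Bxv+yv-log}) and the normal forms of $\mu_1^{\log},\mu_2^{\log}$ to fix the exact divisibility of $Y$ — neither more nor less — while tracking every $Unit$ factor, and one must treat separately the sub-cases in which a naive exponent would come out negative (these are precisely the configurations that force the relations among $m,n,p,q,r,s$). Individually each case is elementary linear bookkeeping, but there are several of them, and keeping the successive coordinate changes compatible with the already-achieved monomial form of $x$ and standard form of $\mu_1$ — legitimate because Lemma \ref{lem:mu1-corner} holds in every adapted chart — is what makes the argument long rather than deep.
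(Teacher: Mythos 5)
Your Cramer's-rule reformulation is a clean and genuinely different route from the paper's: rather than splitting $Y$ into a part $f$ supported on the ray $\{ml-kn=0\}$ and a part $z$ supported off it (as the paper does) and then absorbing the unit of $z$ by a multiplicative change of coordinates, you observe directly that once $x=a_0\pm u^mv^n$, the Jacobian identity $\omega_1\wedge\omega_2 = Unit\cdot u^{m+r+p-1}v^{n+s+q-1}\,\rd u\wedge\rd v$ (with $\omega_1 = \rd(u^mv^n)$, $\omega_2 = u^mv^n\rd Y$) makes $\rd(u^{r+p}v^{s+q}) = f\omega_1 + g\omega_2$ with $f\in\cO_{\wta}$ and $g = Unit\cdot\bigl(m(s+q)-n(r+p)\bigr)$, so no further coordinate normalization of $y$ is needed. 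That reduction is correct and, I think, tidier than the printed argument.

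However, the proposal then leaves the single remaining point — the non-degeneracy $m(s+q)-n(r+p)\neq 0$ — to an unwritten ``case analysis on the type of $\wta$ for $\cF_2$,'' and the intermediate assertion that this analysis is supposed to deliver, namely that after adapted unit rescalings $Y$ \emph{equals} a unit times $u^{r+p-m}v^{s+q-n}$ (hence $y-b_0 = \pm u^{r+p}v^{s+q}$), is too strong and in general false: the paper's own computation shows that $Y$ decomposes as $f+z$ where only the off-ray part $z$ is a unit times that monomial, while $f$ (containing in particular the constant $y_0$ and any monomials proportional to $(m,n)$) need not vanish. This is a real gap, because the entire statement now hinges on the one unproved fact. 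The irony is that you already have the key identity in hand: writing $Y = \sum c_{kl}u^kv^l$, the relation $mvY_v - nuY_u = Unit\cdot u^{r+p-m}v^{s+q-n}$ reads $\sum c_{kl}(ml-nk)u^kv^l = Unit\cdot u^{r+p-m}v^{s+q-n}$, and equating the coefficients of $u^{r+p-m}v^{s+q-n}$ gives $c_{r+p-m,\,s+q-n}\cdot\bigl(m(s+q-n)-n(r+p-m)\bigr)\neq 0$, hence $m(s+q)-n(r+p)\neq 0$ immediately. Replacing the deferred case analysis by this one-line coefficient comparison closes the gap and yields a complete proof along your route, at which point it matches the paper's conclusion while avoiding the paper's final multiplicative change of coordinates.
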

\begin{proof}
We already have that $x=a_0 \pm u^mv^n$, the sign "$\pm$" may be "$-$" only if $m,n$ are both even.

\smallskip
We can write $y  - b_0 = u^mv^n[f(u,v) + z(u,v)]$ for regular germ $f,z \in \cO_\wta$ such that 
each monomial $u^kv^l$ appearing in $f$ is such that $ml - kn =0$, and each monomial $u^kv^l$ appearing in $z$ 
is such that $ml- kn \neq 0$. Thus 
\begin{center}
$\rd x \wedge \rd y = u^mv^n \rd x \wedge \rd z = 
Unit\cdot u^{r+p+m-1}v^{s+q+n-1}\rd u \wedge \rd v$.
\end{center}
Let $u^kv^l$ be a monomial of $z$, thus $\rd x \wedge \rd (u^kv^l) = (ml-nk) u^{k+m-1}v^{l+n-1} \rd u \wedge \rd v$.
Necessarily we deduce that $z = u^{r+p}v^{s+q}\alpha$ for a local analytic unit $\alpha$.
Thus the plane vectors $(m,n)$ and $(p+r,q+s)$ are 
linearly independent. We are looking, 
if possible, for a change of local coordinates of the form $u = \bru U$ and $v = \brv V$ for local units 
$U,V$ such that $\bru^m\brv^n = \pm u^mv^n$ and $u^{r+p}v^{s+q}\alpha = \pm \bru^{r+p}\brv^{s+q}$. 
Let $\ve$ be the sign of $\alpha (0)$.
So we need $U^mV^n = 1$ and $U^{r+p}V^{s+q} = \ve \alpha$ knowing that $m(s+q) - n (r+p) \neq 0$, 
this is equivalent to $V^{(s+q)n - (r+p)m} = (\ve \alpha)^m$, which can be solved. 
Thus we can re-write $x = a_0 \pm \bru^m\brv^n$ and $y = a_0 + \bru^m\brv^n[h(\bru,\brv) \pm 
\bru^{r+p}\brv^{s+q})]$ where $h$ has only monomials $\bru^k\brv^l$ such that $ml-kn=0$.
These coordinates satisfy the announced result.
\end{proof}
As in the case of a regular point, using Hsiang \& Pati proof, with a few elementary computations, 
we deduce from Proposition \ref{prop:diff-corner} the existence of Hsiang \& Pati coordinates:
\begin{corollary}[see also \cite{BBGM}]\label{cor:real-HP-corner}
There exist adapted coordinates $(u,v)$ at the corner point $\wta$ such that the resolution mapping $\wtpi$
locally writes 
\begin{center}
$(u,v)  \to (x,y;z) = \wtpi (\wta)+(\pm u^mv^n, u^mv^nf(u,v) \pm u^kv^l;  u^mv^nz(u,v)+u^kv^l Z(u,v)) \hfill$

\smallskip
$\hfill \in \R\times\R\times\R^{n-2}$
\end{center}
for non-negative integers $k \geq m$ and $l\geq n$ such that $nk -lm \neq 0$ and germs $f \in \cO_\ula$ 
and $z,Z\in \cO_\ula^{N-2}$ such that $\rd f \wedge \rd(u^mv^n) = \rd z \wedge \rd(u^mv^n) \equiv 0$.
\end{corollary}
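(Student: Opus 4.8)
The plan is to deduce the local expression of $\wtpi$ at the corner point $\wta$ straight from Proposition \ref{prop:diff-corner}, exactly the way Corollary \ref{cor:real-HP-smooth} was obtained from Proposition \ref{prop:diff-smooth}. First I would fix the adapted coordinates $(u,v)$ centered at $\wta$ produced by Proposition \ref{prop:diff-corner}, set $k:=r+p$ and $l:=s+q$, and record that in these coordinates $k\geq m\geq 1$, $l\geq n\geq 0$, the plane vectors $(m,n)$ and $(k,l)$ are $\R$-linearly independent (equivalently $nk-lm\neq 0$), and $(D\wtpi)^*\Omg_{M_0}^1$ is generated at $\wta$, as an $\cO_\wta$-submodule of $\Omg_{\wtX,\wta}^1$, by $\rd(u^mv^n)$ and $\rd(u^kv^l)$. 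The normalising change of coordinates carried out inside the proof of Proposition \ref{prop:diff-corner} moreover already gives, for suitable local coordinates $x,y,z_3,\dots,z_n$ on $M_0$ near $\wtpi(\wta)$, that $\wtpi^*x = a_0\pm u^mv^n$ and $\wtpi^*y = b_0+u^mv^n\bigl(f(u,v)\pm u^kv^l\bigr)$, with $a_0,b_0\in\R$ and $\rd f\wedge\rd(u^mv^n)\equiv 0$, a sign being $-$ only when the relevant exponents are even. So the first two components of $\wtpi$ are already in the announced shape, and the only real work left concerns the remaining coordinates $z=(z_3,\dots,z_n)$.

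For these, I would use Proposition \ref{prop:differential}, together with the identification of modules established in the proof of Proposition \ref{prop:diff-corner}, to conclude that each differential $\rd(\wtpi^*z_j)$ lies in $\cO_\wta\,\rd(u^mv^n)+\cO_\wta\,\rd(u^kv^l)$. Then I would invoke the elementary integration argument of Hsiang \& Pati in the following form: for a regular germ $g$ vanishing at $\wta$ with $\rd g\in\cO_\wta\,\rd(u^mv^n)+\cO_\wta\,\rd(u^kv^l)$, expanding $g$ in monomials and sorting them, using the linear independence of $(m,n)$ and $(k,l)$, the inequalities $k\geq m$, $l\geq n$, and the fact (from Theorem \ref{thm:main}, iv) that $u^mv^n$ and $u^kv^l$ are ordered monomials, one writes $g=u^mv^n\,h_0(u,v)+u^kv^l\,h_1(u,v)$ where $h_0$ involves only monomials along the ray $\R_{\geq0}(m,n)$, i.e. $\rd h_0\wedge\rd(u^mv^n)\equiv 0$. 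Applying this to $g=\wtpi^*z_j-c_j$ for constants $c_j\in\R$ and collecting the germs $h_0$ into a vector $z(u,v)$ and the germs $h_1$ into $Z(u,v)$ gives $\wtpi^*z = c+u^mv^n\,z(u,v)+u^kv^l\,Z(u,v)$ with $\rd z\wedge\rd(u^mv^n)\equiv 0$, which is precisely the third block of the normal form.

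Assembling the three blocks yields the claimed local form of $\wtpi$ at $\wta$, with the conditions $k\geq m$, $l\geq n$, $nk-lm\neq 0$ and $\rd f\wedge\rd(u^mv^n)=\rd z\wedge\rd(u^mv^n)\equiv 0$ being exactly what has been recorded above. One delicate point — inherited verbatim from the proof of Corollary \ref{cor:real-HP-smooth}, where one has to respect that $-1$ has no real square root — is that over $\R$ the analytic units standing in front of $u^mv^n$ and of $u^kv^l$ can be absorbed into a change of $(u,v)$ only up to a sign, which is why the normal form carries the $\pm$ factors; this was, however, already handled inside the proof of Proposition \ref{prop:diff-corner}. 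The main obstacle is the monomial--sorting (``integration'') step itself: one must rule out the existence of monomials of $g$ lying strictly between the rays $\R_{\geq0}(m,n)$ and $\R_{\geq0}(k,l)$, and it is exactly there that the ordering of Theorem \ref{thm:main} together with $k\geq m$, $l\geq n$ and $nk-lm\neq 0$ are used.
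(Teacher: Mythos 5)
Your proposal is correct and takes essentially the same route as the paper, which at the corner point simply asserts that the corollary follows from Proposition~\ref{prop:diff-corner} ``using Hsiang \& Pati proof, with a few elementary computations'' without spelling them out. You do spell them out, and your account is accurate: Proposition~\ref{prop:diff-corner} (together with the preparation before Lemma~\ref{lem:mu1-corner}, which gives $p\geq m$, $q\geq n$ and the change of coordinates producing $x=a_0\pm u^mv^n$, $y=b_0+u^mv^n(f\pm u^kv^l)$) already delivers the first two coordinates and the generation of $(D\wtpi)^*\Omg_{M_0}^1$ by $\rd(u^mv^n)$ and $\rd(u^kv^l)$ with $k=r+p\geq m$, $l=s+q\geq n$ and $nk-lm\neq 0$; and the monomial-sorting step for the remaining $z_j$ is exactly the Hsiang--Pati integration argument, whose correctness rests precisely on those three conditions as you indicate (any monomial $u^av^b$ of $g$ with $an-bm\neq 0$ is forced, by $\rd g\wedge\rd(u^mv^n)=\beta\,\rd(u^kv^l)\wedge\rd(u^mv^n)$, to satisfy $a\geq k$, $b\geq l$, while monomials on the ray through $(k,l)$ are already divisible by $u^kv^l$, so nothing strictly between the rays survives). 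The sign bookkeeping ($\pm$ absorbed only up to even powers over $\R$) is, as you note, handled inside Proposition~\ref{prop:diff-corner}.
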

%
%
%
%
%
%
%
%
%
%
%
%
%
%
%
%
%
%
%
%
%
%
%
%
%
%
%
%
%
%
%
%
%
%
%
\subsection{Local normal form for the induced metric}\label{subsection:norm-form-metric}
$ $

\smallskip
Following on the material presented in the previous subsection, we will give local normal forms of the
metric $\bg_0|_{X_0}$ pulled-back onto the resolved surface $\wtX$ at any regular point $\wta$ of the exceptional 
divisor $\wtE$ and add a few precisions at corner points.

\medskip 
From Theorem \ref{thm:main}, Proposition \ref{prop:differential}, Proposition \ref{prop:diff-smooth}
and Proposition \ref{prop:diff-corner} there exist 
\begin{center}
$U:=\cM_1^{\log}$ and $T:=\cM_2^{\log}$ and $V:=T \cdot \cM_{1,2}^{\log}$,
\end{center} 
ordered monomials in $\wtE$ with $T = U \cdot (\cdots)$ and $V = T \cdot (\cdots)$ such that 
the pulled-back metric on $\wtX$ writes nearby $\wta$
\begin{eqnarray*}
\wtpi^*\bg_0|_{X_0} & = & \lambda_1 (\cM_1\mu_1)\tns (\cM_1\mu_1)  + \lambda_2(\cM_2\mu_2)\tns (\cM_2\mu_2) 
\end{eqnarray*}
for positive analytic units $\lambda_1,\lambda_2$. Moreover we know that
$\{U \mu_1^{\log},T\mu_2^{\log}\}$ and $\{U \rdlg U,V \rdlg V\}$ are both 
$\cO_\wta$-basis of the sub-module $\wtpi^*\Omg_{M_0}^1$ of $\Omg_{\wtX}^1(\log E)$ nearby $\wta$.
Thus we can write,
\begin{eqnarray}\label{eq:corners-log-1}
U \mu_1^{\log}& = & C_1U\rdlg U + D_1 V\rdlg V \\
\label{eq:corners-log-2}
T \mu_2^{\log} & = & C_2U\rdlg U + D_2 V \rdlg V
\end{eqnarray}
with $C_1D_2 - C_2D_1 = Unit$. Let us write 
\begin{center}
$\eta_1 := \rdlg U\;\;$ and $\;\;\eta_2 := \rdlg V$. 
\end{center}
If $T \neq U$ in Equation (\ref{eq:corners-log-2}), then $C_1D_2 = Unit$. 
Thus we can write  
\begin{center}
$\wtpi^*\bg_0|_{X_0} = (\lambda_1C_1^2 + \lambda_2C_2^2) \eta_1\tns\eta_1 + 
(\lambda_1D_1^2 + \lambda_2D_2^2) \eta_2\tns\eta_2 + \hfill$

\smallskip
$\hfill (\lambda_1C_1D_1 + \lambda_2C_2D_2) (\eta_1\tns\eta_2+\eta_2\tns\eta_1)$
\end{center}
for positive analytic units $\lambda_1,\lambda_2$.
Since $C_1D_2 - C_2D_1$ is a unit, as a quadratic form in the "variables" $\eta_1,\eta_2$,
the pulled-back metric $\wtpi^*\kp$ is positive definite nearby $\wta$, thus we
deduce the following looked for and expected 
\begin{proposition}\label{prop:quasi-iso-metric}
At the point $\wta$ of $\wtE$ the pulled-back metric $\wtpi^*\bg_0|_{X_0}$ is locally quasi-isometric to 
the following metric:
\begin{eqnarray*}
U^2\eta_1\tns\eta_1 + V^2\eta_2\tns\eta_2 & = &\rd U\tns\rd U + \rd V\tns\rd V \\
& = & \rd \cM_1^{\log}\tns\rd \cM_1^{\log} + \rd (\cM_2^{\log}\cM_{1,2}^{\log})\tns\rd(\cM_2^{\log} \cM_{1,2}^{\log}) .
\end{eqnarray*}
\end{proposition}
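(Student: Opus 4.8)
The plan is to read the statement off from the relations (\ref{eq:corners-log-1})--(\ref{eq:corners-log-2}) together with the sum-of-two-squares decomposition of $(D\wtpi)^*\bg_0|_{X_0}$ obtained just above. First I would record the elementary identities $\rd U = U\,\rdlg U = U\eta_1$ and $\rd V = V\,\rdlg V = V\eta_2$, so that the two $\cO_\wta$-bases of the rank-$2$ module $(D\wtpi)^*\Omg_{M_0}^1$ furnished by Proposition \ref{prop:differential} (and made explicit in Propositions \ref{prop:diff-smooth} and \ref{prop:diff-corner}) read $\{\cM_1\mu_1,\cM_2\mu_2\}=\{U\mu_1^{\log},T\mu_2^{\log}\}$ on one side and $\{\rd U,\rd V\}$ on the other, linked by
\[
\cM_1\mu_1 \;=\; C_1\,\rd U + D_1\,\rd V,\qquad \cM_2\mu_2 \;=\; C_2\,\rd U + D_2\,\rd V,
\]
with $C_1D_2-C_2D_1$ a local analytic unit. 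Substituting this into $(D\wtpi)^*\bg_0|_{X_0}=\lambda_1(\cM_1\mu_1)\tns(\cM_1\mu_1)+\lambda_2(\cM_2\mu_2)\tns(\cM_2\mu_2)$, with $\lambda_1,\lambda_2$ positive analytic units, expresses the pulled-back metric, with respect to the ordered pair $(\rd U,\rd V)$, by the symmetric Gram matrix
\[
A \;=\; \begin{pmatrix} \lambda_1C_1^2+\lambda_2C_2^2 & \lambda_1C_1D_1+\lambda_2C_2D_2 \\ \lambda_1C_1D_1+\lambda_2C_2D_2 & \lambda_1D_1^2+\lambda_2D_2^2 \end{pmatrix},
\]
which is congruent, through the invertible matrix $P$ with rows $(C_1,D_1)$ and $(C_2,D_2)$, to $\mathrm{diag}(\lambda_1,\lambda_2)$; the model tensor $\rd U\tns\rd U+\rd V\tns\rd V$ is represented by the identity matrix in the same pair.

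Next I would invoke a compactness argument. Since the $\lambda_i$ are positive units and $\det P=C_1D_2-C_2D_1$ is a unit, $P(\wtb)$ is invertible and $A(\wtb)=P(\wtb)^{\!\top}\,\mathrm{diag}(\lambda_1(\wtb),\lambda_2(\wtb))\,P(\wtb)$ is positive definite for every $\wtb$ in a neighborhood of $\wta$; shrinking to a compact neighborhood $\wtcU$, continuity forces the eigenvalues of $A(\wtb)$ to stay inside a fixed interval $[\kappa^{-1},\kappa]$ with $\kappa=\kappa(\wtcU)\ge 1$, i.e. $\kappa^{-1}\,\mathrm{Id}\le A(\wtb)\le\kappa\,\mathrm{Id}$ as quadratic forms on $\R^2$. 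Evaluating on $(\rd U(\xi),\rd V(\xi))$ for an arbitrary $\xi\in T_\wtb\wtX$, and using $\rd U=U\eta_1$, $\rd V=V\eta_2$, one obtains
\[
\kappa^{-1}\bigl(U^2\eta_1\tns\eta_1+V^2\eta_2\tns\eta_2\bigr)\;\le\;(D\wtpi)^*\bg_0|_{X_0}\;\le\;\kappa\bigl(U^2\eta_1\tns\eta_1+V^2\eta_2\tns\eta_2\bigr)
\]
on $\wtcU$, which is exactly the claimed local quasi-isometry. The estimate carries content because the two (semi-Riemannian, degenerate along $\wtE$) tensors have the same kernel $\{\rd U=\rd V=0\}$ at each point of $\wtcU$, again by invertibility of $P$. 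Finally, $U^2\eta_1\tns\eta_1+V^2\eta_2\tns\eta_2=\rd U\tns\rd U+\rd V\tns\rd V$ by the identities above, and $\rd U=\rd\cM_1^{\log}$, $\rd V=\rd(\cM_2^{\log}\cM_{1,2}^{\log})$ since $U=\cM_1^{\log}$ and $V=T\,\cM_{1,2}^{\log}=\cM_2^{\log}\cM_{1,2}^{\log}$, which yields the last displayed normal form.

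I do not expect a genuine obstacle here: at this point the proposition is a change-of-basis computation followed by a standard compactness estimate. The two points needing a line of care are (i) the invertibility of $P$ near $\wta$ --- precisely where the monomial ordering of Theorem \ref{thm:main}\,(iv), already used to derive $C_1D_2-C_2D_1=Unit$, enters; and (ii) the fact that \emph{quasi-isometric} is being applied to tensors that degenerate along $\wtE$, so the coincidence of kernels must be noted to see the two-sided bound is meaningful, which is immediate from (i). The substantive content --- that $(D\wtpi)^*\bg_0|_{X_0}$ is, up to bounded positive factors, the sum of the symmetric squares of $\rd\cM_1^{\log}$ and $\rd(\cM_2^{\log}\cM_{1,2}^{\log})$ --- has already been established in Theorem \ref{thm:main} and Propositions \ref{prop:differential}, \ref{prop:diff-smooth} and \ref{prop:diff-corner}.
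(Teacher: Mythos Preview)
Your proposal is correct and follows essentially the same route as the paper: express $\cM_1\mu_1,\cM_2\mu_2$ in the basis $\rd U,\rd V$ via the invertible matrix $P$ with rows $(C_1,D_1),(C_2,D_2)$, observe that the resulting Gram matrix $A=P^{\!\top}\mathrm{diag}(\lambda_1,\lambda_2)P$ is positive definite because $\det P=C_1D_2-C_2D_1$ is a unit, and conclude quasi-isometry. The paper compresses all of this into the short paragraph immediately preceding the proposition; your version spells out the compactness step and the kernel-coincidence remark more explicitly, but the argument is the same.
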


\medskip\noindent
$\bullet$ When $\wta$ is a regular point of $\wtE$, we can be a little more specific. We recall that 
by Lemma \ref{lem:change-mu1}, we can assume that for any given $p\geq s-r+t+2$, the coordinates 
$(u,v)$ are such that $\mu_1 = \rd u + u^pc_1 \rd v$.
As a consequence of Proposition \ref{prop:norm-form-smooth} and 
Proposition \ref{prop:diff-smooth} and of elementary computations we find the following 
normal forms:
\begin{proposition}\label{prop:metric-smooth}
Let $\wta$ be a regular point of $\wtE$ we obtain. For each integer number $\rho\geq 2s + 2t + 1$, there
exists an local adapted coordinates $(u,v)$ such that 

\smallskip
\begin{center}
$\wtpi^*\bg_0 = \lambda_1 u^{2r}\rd u\tns\rd u + \lambda_2 u^{2s+2t}\rd v\tns\rd v + u^{\rho}(\cdots) 
(\rd u\tns\rd v+\rd v\tns\rd u)$
\end{center}
for positive analytic units $\lambda_1$, $\lambda_2$.
\end{proposition}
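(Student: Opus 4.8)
The plan is to read the statement off Theorem \ref{thm:main}, after a few elementary changes of adapted coordinates at $\wta$. By Theorem \ref{thm:main} (in the form recorded just before Proposition \ref{prop:quasi-iso-metric}), near the regular point $\wta$ of $\wtE=\{u=0\}$ one has $(D\wtpi)^*\bg_0=\lambda_1(\cM_1\mu_1)\tns(\cM_1\mu_1)+\lambda_2(\cM_2\mu_2)\tns(\cM_2\mu_2)$ with $\lambda_1,\lambda_2$ positive analytic units, $\cM_1=u^{r}$, $\cM_2=u^{s}$, $0\le r\le s$, and $\mu_1\wedge\mu_2=Unit\cdot u^{t}\,\rd u\wedge\rd v$. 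By the Remark following Proposition \ref{prop:norm-form-smooth} we may assume $\mu_1(\wta)\neq0$, so Proposition \ref{prop:norm-form-smooth}(1) gives $\mu_1=\rd u+uc_1\rd v$; iterating Lemma \ref{lem:change-mu1} I would then pass to adapted coordinates in which $\mu_1=\rd u+u^{p}c_1\rd v$ for $p$ as large as needed, say $p\ge\rho$.

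Next I would normalize $\mu_2$ using the three cases of Proposition \ref{prop:diff-smooth} (equivalently Proposition \ref{prop:norm-form-smooth}(2) with $\mu_1,\mu_2$ exchanged). When $t=0$ the leaf of $\cF_2$ through $\wta$ is normal crossing with $\{u=0\}$, and a change of the $v$-coordinate fixing $\{u=0\}$ straightens $\cF_2$ to $\mu_2=Unit\cdot\rd v$ (it reintroduces in $\mu_1$ only a $\rd v$-term of positive $u$-order, killed by re-applying Lemma \ref{lem:change-mu1}). When $t\ge1$ the component $\{u=0\}$ is invariant for $\cF_2$; writing $\mu_2=g\,\mu_1+h\,\rd v$, the identity $\mu_1\wedge\mu_2=Unit\cdot u^{t}\rd u\wedge\rd v$ forces $h=Unit\cdot u^{t}$, so $\mu_2=g\,\mu_1+Unit\cdot u^{t}\rd v$, and Proposition \ref{prop:diff-smooth} records whether $g$ is a unit ($\cF_2$ regular at $\wta$) or lies in $\bm_\wta$ ($\cF_2$ singular at $\wta$). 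Substituting into $\lambda_1u^{2r}\mu_1\tns\mu_1+\lambda_2u^{2s}\mu_2\tns\mu_2$ and collecting terms, the coefficient of $\rd u\tns\rd u$ is $u^{2r}$ times a positive unit and, because $p$ is large, the coefficient of $\rd v\tns\rd v$ is $u^{2s+2t}$ times a positive unit.

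It remains to push the coefficient of $\rd u\tns\rd v+\rd v\tns\rd u$ up to order $u^{\rho}$. All its contributions are already of order $\ge u^{\rho}$ except one of order $u^{2s+t}$, which is absent when $t=0$; the ratio of this term to the $\rd u\tns\rd u$-coefficient is a regular germ divisible by $u^{2(s-r)+t}$, hence by $u$ precisely when $t\ge1$. I would therefore complete the square against the $u^{2r}\rd u\tns\rd u$-term by a change $u\mapsto u(1+\psi)$ with $\psi\in\bm_\wta$, chosen so as to annihilate the offending term modulo $u^{\rho}$; such a change is adapted to $\wtE$ exactly because the required $\psi$ is divisible by $u$, and a short computation — using that the determinant $h_{uu}h_{vv}-h_{uv}^{2}$ of the metric is $u^{2r+2s+2t}$ times a positive unit — shows the terms thereby absorbed into the $\rd v\tns\rd v$-coefficient are of order $\ge u^{2s+2t}$, so the two diagonal exponents and the positivity of the units survive. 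Renaming units yields the claimed formula.

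The main obstacle is exactly this last step: one must verify at once that the off-diagonal coefficient can be driven to order $u^{\rho}$ for every $\rho\ge 2s+2t+1$ — which is why the freedom to take $p$ arbitrarily large in Lemma \ref{lem:change-mu1} is essential and why $2s+2t+1$ is the sharp lower bound — while the coordinate change effecting this stays adapted to $\wtE$ and leaves the diagonal terms $u^{2r}$, $u^{2s+2t}$ and the positive units $\lambda_1,\lambda_2$ intact; the subcase where $\cF_2$ is singular at $\wta$ is the one where the precise normal form of $\mu_2$ from Proposition \ref{prop:diff-smooth} genuinely enters.
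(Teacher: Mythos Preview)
Your proposal is correct and follows essentially the same route as the paper: normalize $\mu_1$ via Lemma \ref{lem:change-mu1} so that $\mu_1=\rd u+u^pc_1\rd v$ with $p$ large, write $\mu_2=g\,\mu_1+Unit\cdot u^t\rd v$, expand the metric, and then eliminate the off-diagonal contribution of order $u^{2s+t}$ by adapted changes of the exceptional coordinate. The paper carries out this last step explicitly by iterating changes $u=w(1+w^qA(v))$ (each determined by an ODE in $v$) and then finitely many changes $v=y+u^mJ(y)$ to reach any prescribed $\rho\geq 2s+2t+1$, whereas you package all of this into a single change $u\mapsto u(1+\psi)$ and invoke the determinant of the metric to verify that the $\rd v\tns\rd v$ coefficient stays $u^{2s+2t}$ times a positive unit---a clean shortcut the paper does not make explicit.
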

\begin{proof}
Let $\rho$ be given. We can already assume that $p$ is chosen so that $2r + p \geq \rho$. 
We write $\mu_2$ as $B\mu_1 + u^t\rd v$ for $B\in \bm_\wta$ and $t \geq 1$.
We an write 
\begin{center}
$\wtpi^*\bg_0 = [Unit^2 \cdot u^{2r} + Unit^2 \cdot u^{2s+2t} B^2]\mu_1\tns\mu_1 + Unit^2 \cdot u^{2s+t} B (\mu_1 \tns\rd v + 
\rd v\tns\mu_1) \hfill$

\smallskip
$\hfill + Unit^2 \cdot u^{2s+2t} \rd v\tns\rd v$ .
\end{center}
Let us consider a change of variable of the form $u = w (1 + w^q A(v))$ with $q$ a positive integer. 
Then we deduce that 
\begin{eqnarray*}
\rd u & = & [1+ (q+1)w^q A] \rd w + w^{q+1} A'\rd v \\
\rd u \tns\rd v & = & [1+ (q+1)w^q A] \rd w \tns\rd v + w^q A'\rd v\tns\rd v   \\
\rd u \tns \rd u & = & [1+ (q+1)w^q A]^2 \rd w\tns\rd w + w^{q+1}[1 + (q+1)w^q A] A'(\rd w \tns\rd v + \rd v \tns\rd w) 
\\
& & + w^{2q + 2} (A')^2 \rd v\tns\rd v
\end{eqnarray*}
Observe that $\mu_1\tns\mu_1 = (\rd u)^2 + u^p(\cdots)$.
In the new coordinates we find 
\begin{center}
$\wtpi^*\bg_0|_{X_0} = Unit^2 \cdot w^{2r} \rd w\tns\rd w +  w^r C (\rd v\tns\rd w + \rd w\tns\rd v) + w^{2r} (\cdots) 
\rd v\tns\rd v$
\end{center}
where 
\begin{center}
$w^rC = (\lambda_1 w^{2r+q+1}[1 + (q+1)w^qA]^{2r+1}A' + \lambda_2 w^{2s+t}[1 + (q+1)w^qA]^{2s+t}[B + w^t(\cdots)]$
\end{center}
Since $B(0,v)= v^l b_0(v)$ for a positive integer $l$ and a local analytic unit $b_0(v)$, 
taking $q = 2s -2r +t -1$,
we find $A(v) = v^{l+1} a_0(v)$ for a local analytic unit $a_0$, resolving a differential equation 
in $v$ of the form $A' \psi (A) = v^l f(v)$ where $\psi$ and $f$ are local analytic unit in $v$, 
such that $w^rC = w^{2s +t +1}(\cdots)$.
The metric then writes 
\begin{center}
$\wtpi^*\bg_0|_{X_0} = Unit^2 \cdot w^{2r}\rd w\tns\rd w + Unit^2 u^{2s+2}(u^{t-1} \rd v+ G \rd w)\tns(u^{t-1} \rd v+ G \rd w)$ 
with $\; G \in \bm_{\wta}$.
\end{center}
Up to factoring further powers of $u$ from $G$, we may assume that $G(0,v) \neq 0$, which is the worse case scenario.
Replacing $\mu_1$ by $\rd w$ and $\mu_2$ be $H\rd u + u^{t-1}\rd v$, we check that 
after at most $t-1$ consecutive changes of coordinates in the \em exceptional \em variable $u_{exc}$, of the form 
\begin{center}
$u_{exc,old} := u_{exc,new} [1 + u_{exc,new}^{q_{new}} A_{new}(v)]$, 
\end{center} 
we find adapted coordinates $(x,v)$, with $(\wtE,\wta) = \{x=0\}$, such that   
\begin{center}
$\wtpi^*\bg_0|_{X_0} = Unit^2 \cdot u^{2r} \rd u\tns\rd u + Unit^2 \cdot u^{2s+2t}\rd v\tns\rd v + 
x^{2s+2t+1} (\cdots)(\rd u\tns\rd v + \rd v\tns\rd u)$.
\end{center}
From here, finitely many (iterated) changes of variables of the form $v = y + x^m J(y)$, for a positive integer
$m$ and regular function germ $J$ to find, will provide the 
announced result.
\end{proof}

\smallskip\noindent
$\bullet$ When $\wta$ is a corner point of $\wtE$, we know that the form $\mu_1$ (attached to the "smallest" 
monomial in $\wtE$, writes 
\begin{center}
$\mu_1 = uv[\rdlg u + (\lambda + G) \rdlg v]$, with $-\lambda\notin\Q_{\geq 0}$ and $G \in \bm_\wta$
\end{center}
for local adapted 
coordinates $(u,v)$ at $\wta$.
Since our result is general and we do not have explicit equations of the foliations we are dealing with,
the desingularization of the foliation locally given by $\mu_1$ will tell us very little about $\lambda$. 
Remarkably the very special context we are working in gives the value of $\lambda$, for free, namely 
\begin{corollary}
Since $U\mu_1^{\log} = U \rdlg U +  H V \rdlg V$ with $H \in \cO_\wta$, and $U = u^mv^n$,
for positive integer numbers $m,n$, we find that $\lambda = \frac{n}{m}$, in other words
we can choose $\mu_1$ such that 
\begin{center}
$\mu_1^{\log} = \rdlg U + F \rdlg v\, , \;$ with $\;F \in \bm_\wta.$
\end{center}
\end{corollary}
\begin{proof}
Let $\xi := nu\dd_u - mv\dd_v$ so that $\rdlg U (\xi) \equiv 0$. 
Using Proposition \ref{prop:quasi-iso-metric} and evaluating both quasi-isometric 
metrics along the vector field $\xi$ provides the result.
\end{proof}
%
%
%
%
%
%
%
%
%
%
%
%
%
%
%
%
%
%
%
%
%
%
%
%
%
%
%
%
%
%
%
%
%
%
%
%
%
%
%

%
%
\end{document}